\theoremstyle{plain}
\newtheorem{theorem}[subsection]{Theorem}
\newtheorem{lemma}[subsection]{Lemma}
\newtheorem{proposition}[subsection]{Proposition}
\newtheorem{corollary}[subsection]{Corollary}
\theoremstyle{definition}
\newtheorem{definition}[subsection]{Definition}
\newtheorem{example}[subsection]{Example}
\theoremstyle{remark}
\newtheorem{remark}[subsection]{Remark}
\newenvironment{tfae}
{
\begin{enumerate}}
{\end{enumerate}}
\newcommand{\bigjoin}{\ensuremath{\bigvee}}
\newcommand{\bigmeet}{\ensuremath{\bigwedge}}
\newcommand{\comp}{\circ}
\newcommand{\DefEq}{\coloneq}
\newcommand{\defn}{\textbf}
\newcommand{\del}{\partial}
\newcommand{\from}{\colon}
\newcommand{\join}{\ensuremath{\vee}}
\newcommand{\meet}{\ensuremath{\wedge}}
\newcommand{\noproof}{\hfill \qed}
\newcommand{\normal}{\ensuremath{\lhd}}
\renewcommand{\square}{\raisebox{.4mm}{\,\ensuremath{\boxvoid}}}
\def\rtop{\text{\tiny\rotatebox[origin=c]{-45}{$\bot$}}}
\def\ltop{\text{\tiny\rotatebox[origin=c]{45}{$\bot$}}}
\DeclareMathOperator{\arr}{Arr}
\DeclareMathOperator{\cod}{Cod}
\DeclareMathOperator{\Coeq}{Coeq}
\DeclareMathOperator{\CoeqFork}{CoeqFork}
\DeclareMathOperator{\CoForget}{CoForget}
\DeclareMathOperator{\coker}{coker}
\DeclareMathOperator{\Coker}{Coker}
\DeclareMathOperator{\CokerSeq}{CokerSeq}
\DeclareMathOperator{\dom}{Dom}
\DeclareMathOperator{\epi}{Epi}
\DeclareMathOperator{\Eq}{Eq}
\DeclareMathOperator{\EqFork}{EqFork}
\DeclareMathOperator{\Hom}{Hom}
\DeclareMathOperator{\Forget}{Forget}
\DeclareMathOperator{\grph}{Grph}
\DeclareMathOperator{\kernel}{ker}
\renewcommand{\ker}{\kernel}
\DeclareMathOperator{\K}{Ker}
\DeclareMathOperator{\KerSeq}{KerSeq}
\DeclareMathOperator{\mono}{Mono}
\DeclareMathOperator{\op}{op}
\DeclareMathOperator{\Pull}{Pull}
\DeclareMathOperator{\Push}{Push}
\newcommand{\C}{\ensuremath{\mathbb{C}}}
\newcommand{\X}{\ensuremath{\mathbb{X}}}
\newcommand{\E}{\ensuremath{\mathcal{E}}}
\newcommand{\Y}{\ensuremath{\mathbb{Y}}}
\newcommand{\Z}{\ensuremath{\mathbb{Z}}}
\newcommand{\Ab}{\ensuremath{\mathsf{Ab}}}
\newcommand{\Arr}{\ensuremath{\mathsf{Arr}}}
\newcommand{\Arrn}{\ensuremath{\mathsf{Arr}^{n}}}
\newcommand{\DPERel}{\ensuremath{\mathsf{DPERel}}}
\newcommand{\ERel}{\ensuremath{\mathsf{ERel}}}
\newcommand{\EERel}{\ensuremath{\mathsf{EERel}}}
\newcommand{\EFork}{\ensuremath{\mathsf{EFork}}}
\newcommand{\Ext}{\ensuremath{\mathsf{Ext}}}
\newcommand{\Extn}{\ensuremath{\mathsf{Ext}^{n}}}
\newcommand{\Fork}{\ensuremath{\mathsf{Fork}}}
\newcommand{\Fun}{\ensuremath{\mathsf{Fun}}}
\newcommand{\NMono}{\ensuremath{\mathsf{NMono}}}
\newcommand{\PERel}{\ensuremath{\mathsf{PERel}}}
\newcommand{\Reg}{\ensuremath{\mathsf{Reg}}}
\newcommand{\RG}{\ensuremath{\mathsf{RGrph}}}
\newcommand{\RRel}{\ensuremath{\mathsf{RRel}}}
\newcommand{\Set}{\ensuremath{\mathsf{Set}}}
\newcommand{\Seq}{\ensuremath{\mathsf{Seq}}}
\newcommand{\ThreeCat}{\ensuremath{\mathsf{3}}}
\def\pullback{
 \ar@{-}[]+R+<6pt,-1pt>;[]+RD+<6pt,-6pt>%
 \ar@{-}[]+D+<1pt,-6pt>;[]+RD+<6pt,-6pt>}
\def\dottedpullback{%
 \ar@{.}[]+R+<6pt,-1pt>;[]+RD+<6pt,-6pt>%
 \ar@{.}[]+D+<1pt,-6pt>;[]+RD+<6pt,-6pt>}
\title[Distributivity of congruences and the $3^n$-Lemma]{Higher extensions in exact Mal'tsev categories:\\
distributivity of congruences and the $3^n$-Lemma}
\author{Cyrille Sandry Simeu}
\author{Tim Van~der Linden}
\address[Cyrille Sandry Simeu, Tim Van~der Linden]{Institut de
Recherche en Math\'ematique et Physique, Universit\'e catholique
de Louvain, che\-min du cyclotron~2 bte~L7.01.02, B--1348
Louvain-la-Neuve, Belgium}
\email[Cyrille Sandry Simeu]{cyrille.simeu@uclouvain.be}
\email[Tim Van~der Linden]{tim.vanderlinden@uclouvain.be}
\thanks{The second author is a Research Associate of the Fonds de la Recherche Scientifique--FNRS}
\subjclass[2010]{18A20, 18G10, 18G15, 20J06, 08B10}
\keywords{$3\times 3$-Lemma; congruence distributivity; arithmetical ring, locally cyclic group; exact Mal'tsev, semi-abelian, arithmetical category; Yoneda extension; cohomology.}
\begin{document}

\begin{abstract}
The aim of this article is to better understand the correspondence between $n$-cubic extensions and $3^n$-diagrams, which may be seen as non-abelian Yoneda extensions, useful in (co)homology of non-abelian algebraic structures.

We study a higher-dimensional version of the coequaliser/kernel pair adjunction, which relates $n$-fold reflexive graphs with $n$-fold arrows in any exact Mal'tsev category. 

We first ask ourselves how this adjunction restricts to an equivalence of categories. This leads to the concept of an \emph{effective $n$-fold equivalence relation}, corresponding to the $n$-fold regular epimorphisms. We characterise those in terms of what (when $n=2$) Bourn calls \emph{parallelistic} $n$-fold equivalence relations. 

We then further restrict the equivalence, with the aim of characterising the $n$-cubic extensions. We find a congruence distributivity condition, resulting in a \emph{denormalised $3^n$-Lemma} valid in exact Mal'tsev categories. We deduce a $3^n$-Lemma for short exact sequences in semi-abelian categories, which involves a distributivity condition between joins and meets of normal subobjects. This turns out to be new even in the abelian case.
\end{abstract}

\maketitle

\section{Overview}
The classical \emph{$3\times 3$-Lemma} describes when a diagram with horizontal and vertical sequences of morphisms as in Figure~\ref{3x3 diag} may be viewed as a \emph{short exact sequence of short exact sequences}. 
\begin{figure}
$\vcenter{\xymatrix@!0@=3.5em{& 0 \ar@{->}[d] & 0 \ar@{->}[d] & 0 \ar@{->}[d]\\
0 \ar@{->}[r] & \cdot \ar@{}[rd]|{(**)} \ar@{{ |>}->}[r] \ar@{{ |>}->}[d] & \cdot \ar@{{ |>}->}[d] \ar@{-{ >>}}[r] & \cdot \ar@{{ |>}->}[d] \ar@{->}[r] & 0\\
0 \ar@{->}[r] & \cdot \ar@{{ |>}->}[r] \ar@{-{ >>}}[d] & \cdot \ar@{-{ >>}}[r] \ar@{-{ >>}}[d] \ar@{}[rd]|{(*)} & \cdot \ar@{-{ >>}}[d] \ar@{->}[r] & 0\\
0 \ar@{->}[r] & \cdot \ar@{{ |>}->}[r]_-k \ar[d] & \cdot \ar@{-{ >>}}[r]_-f \ar@{->}[d] & \cdot \ar@{->}[d] \ar@{->}[r] & 0\\
& 0 & 0 & 0 }}$
\caption{A $3\times 3$-diagram: all rows and columns are exact sequences.}\label{3x3 diag}
\end{figure}
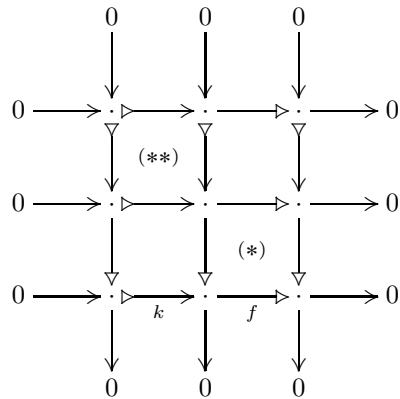
The aim of this article is to explain how to continue this process: to describe what is a $3\times 3 \times 3 $-diagram---in some sense, a short exact sequence between $3\times 3$-diagrams---and so on; see Figure~\ref{Figure 3x3x3} for a picture when~${n=3}$. This leads to a \emph{$3^n$-Lemma} for each $n\geq 2$, which extends the $3\times 3$-Lemma of~\cite{Bourn2001} to higher degrees.

We shall see that, in a semi-abelian category~\cite{Janelidze-Marki-Tholen}, the concept of a higher (cubic) extension (in the sense of~\cite{EGVdL, EGoeVdL, RVdL2, PVdL1} and the papers referred to there) is equivalent to the notion of a \emph{$3^n$-diagram} introduced here. These may be understood as a non-abelian version of the concept of a Yoneda extension~\cite{Yoneda-Exact-Sequences}, which is useful for instance when studying cohomology of non-abelian algebraic structures~\cite{RVdL2,PVdL1}.

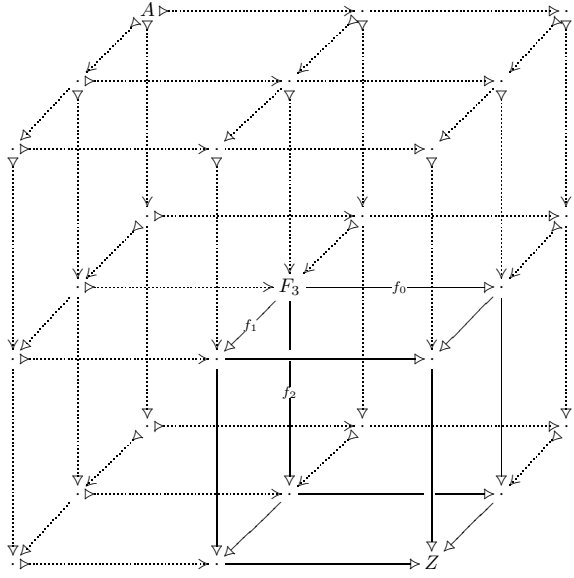
\begin{figure}
\resizebox{.6\textwidth}{!}{
$\vcenter{\xymatrix{&& A \ar@{{ |>}.>}[ld] \ar@{{ |>}.>}[rrr] \ar@{{ |>}.>}[ddd] &&& \cdot \ar@{{ |>}.>}[ld] \ar@{{ |>}.>}[ddd] \ar@{.{ >>}}[rrr] &&& \cdot \ar@{{ |>}.>}[ld] \ar@{{ |>}.>}[ddd]\\
&{\cdot} \ar@{.{ >>}}[ld] \ar@{{ |>}.>}[rrr] \ar@{{ |>}.>}[ddd] &&& \cdot \ar@{.{ >>}}[ld] \ar@{{ |>}.>}[ddd] \ar@{.{ >>}}[rrr] &&& \cdot \ar@{.{ >>}}[ld] \ar@{{ |>}.>}[ddd]\\
{\cdot} \ar@{{ |>}.>}[rrr] \ar@{{ |>}.>}[ddd] &&& \cdot \ar@{{ |>}.>}[ddd] \ar@{.{ >>}}[rrr] &&& \cdot \ar@{{ |>}.>}[ddd]\\
&&{\cdot} \ar@{{ |>}.>}[ld] \ar@{{ |>}.>}[rrr] \ar@{.{ >>}}[ddd] &&& \cdot \ar@{{ |>}.>}[ld] \ar@{.{ >>}}[ddd] \ar@{.{ >>}}[rrr] &&& \cdot \ar@{{ |>}.>}[ld] \ar@{.{ >>}}[ddd]\\
&{\cdot} \ar@{.{ >>}}[ld] \ar@{{ |>}.>}[rrr] \ar@{.{ >>}}[ddd] &&& F_{3} \ar@{-{ >>}}[ld]|-{f_{1}} \ar@{-{ >>}}[ddd]|(.33){\hole}|-{f_{2}} \ar@{-{ >>}}[rrr]|-{f_{0}} &&& \cdot \ar@{-{ >>}}[ld] \ar@{-{ >>}}[ddd]\\
{\cdot} \ar@{{ |>}.>}[rrr] \ar@{.{ >>}}[ddd] &&& \cdot \ar@{-{ >>}}[ddd] \ar@{-{ >>}}[rrr] &&& \cdot \ar@{-{ >>}}[ddd]\\
&&{\cdot} \ar@{{ |>}.>}[ld] \ar@{{ |>}.>}[rrr] &&& \cdot \ar@{{ |>}.>}[ld] \ar@{.{ >>}}[rrr] &&& \cdot \ar@{{ |>}.>}[ld] \\
&{\cdot} \ar@{.{ >>}}[ld] \ar@{{ |>}.>}[rrr] &&& \cdot \ar@{-{ >>}}[ld] \ar@{-{ >>}}[rrr]|(.66){\hole} &&& \cdot \ar@{-{ >>}}[ld]\\
{\cdot} \ar@{{ |>}.>}[rrr] &&& \cdot \ar@{-{ >>}}[rrr] &&& Z}}$}
\caption{A $3\times 3\times 3$-diagram: all pairs of arrows are short exact sequences.}\label{Figure 3x3x3}
\end{figure}

In order to obtain a \emph{pointed} version of the $3^n$-Lemma, valid in semi-abelian categories, we first need to analyse the concept of a cubic extension in an exact Mal'tsev context, where we shall prove a \emph{denormalised $3^n$-Lemma}, which deals with exact forks (certain augmented reflexive graphs) instead of short exact sequences (see Figure~\ref{denorm 3x3 diag}). This is meant to be a higher-dimensional version of the \emph{denormalised $3\times 3$-Lemma} of~\cite{Bourn2003}. In other words, our analysis depends on an investigation of cubic extensions---certain commutative cubes of arrows, see Section~\ref{Section Introduction} where the definition is recalled---via internal (higher) equivalence relations.

Let us now present a brief sketch of the structure of the paper, again leaving certain concepts (which will be introduced in Section~\ref{Section Introduction}) undefined. First we introduce the notion of an \emph{effective} $n$-fold equivalence relation (on~$n$ equivalence relations $R_i$, $0\leq i< n$, on an object $X$), which is an equivalence relation that via the coequaliser/kernel pair equivalence corresponds to an $n$-fold regular epimorphism. As it turns out, an $n$-fold equivalence relation is effective if and only if it is \emph{parallelistic}: it is isomorphic to the largest $n$-fold equivalence relation on $(R_i)_{i\in n}$, which is of the form~$\bigboxvoid_{i\in n}R_i$ described in~\cite{RVdL2}. We see that, for any $n$-tuple $(R_i)_{i\in n}$ of equivalence relations on~$X$,
\begin{enumerate}
	\item an $n$-fold effective equivalence relation over them always exists, namely the $n$-fold parallelistic equivalence relation $\bigboxvoid_{i\in n}R_i$;
	\item the $n$-fold regular epimorphism induced by taking pushouts of coequalisers need, however, not be a cubic extension in general;
	\item hence it is not always possible to construct an $n$-cubic extension by pushouts of coequalisers out of the given equivalence relations $R_i$.
\end{enumerate}
We are thus confronted with the following question:
\begin{quote}
\emph{When does a finite collection of equivalence relations $(R_i)_{i\in n}$\\ induce a cubic extension?} 	
\end{quote} 
Our main target in this paper is to explain that the answer is a distributivity condition: if $J_0$, $J_1$, \dots, $J_k\subseteq n$ with $k\geq 1$ such that
$J_i \cap J_j=\emptyset$ for all $i\neq j$, then
\[
 \big(\bigmeet_{j\in J_0}R_j\big)\meet \bigjoin_{i=1}^k\big(\bigmeet_{j\in J_i}R_j\big)=
 \bigjoin_{i=1}^k\big( \bigmeet_{j\in J_0\cup J_i}R_j\big).
\]
This condition characterises higher cubic extensions amongst higher regular epimorphisms in terms of the kernel pairs $R_i=\Eq(f_i)$ of the ``initial ribs'' $f_i$ of a higher regular epimorphism $F$. As a special case, we regain the result from~\cite{EGJVdL} that, when in a given category $\X$ all double regular epimorphisms are double (=~$2$-cubic) extensions, then $\X$ is congruence distributive. That is to say, it is \defn{arithmetical} in the sense of~\cite{Pedicchio2}.

 Reinterpreting the distributivity condition in the pointed context of a semi-abelian category, we may answer the question under which conditions a collection of $n$ normal subobjects $(K_i)_{0\leq i< n}$ on an object $X$ induces a $3^n$-diagram, and thus an $n$-cubic extension, by first taking intersections, and then cokernels of those intersections. From the above we deduce that this happens if and only if the equality
\[
 \big(\bigmeet_{j\in J_0}K_j\big)\meet \bigjoin_{i=1}^k\big(\bigmeet_{j\in J_i}K_j\big)=
 \bigjoin_{i=1}^k\big( \bigmeet_{j\in J_0\cup J_i}K_j\big)
\]
holds whenever $J_0$, $J_1$, \dots, $J_k\subseteq n$ with $k\geq 1$ such that
$J_i \cap J_j=\emptyset$ for all $i\neq j$.

Note that even in an abelian category, this condition cannot come for free. The reason is that also here, if all collections of normal monomorphisms/congruences are distributive, then the category is arithmetical---however, the only arithmetical abelian category is the trivial one~\cite{Pedicchio2}. This may explain why in the literature, as far as we know, currently $3\times 3\times 3$-diagrams were not even considered in the case of modules over a ring.

The following section explains the main ideas of the text in detail, sketching the necessary background and terminology. Section~\ref{Section Exact Mal'tsev} is where the real work is done: proving the denormalised $3^n$-Lemma in an exact Mal'tsev context. We actually prove three different versions of this result, Theorem~\ref{3^n iff extension}, Theorem~\ref{3^n iff distributive parallelistic} and Theorem~\ref{3^n overview}. In Section~\ref{Section Semiabelian} we apply this in the context of semi-abelian categories in order to obtain Theorem~\ref{Theorem 3^n}: the $3^n$-Lemma. We end the paper with some final remarks made in Section~\ref{Section Final}.

\section{Introduction}\label{Section Introduction}
In this section we elaborate on the concepts mentioned in the previous section, recalling definitions and results from the literature, introducing some new notions.

\subsection{The pointed $3\times 3$-Lemma}
In his article~\cite{Bourn2001}, Dominique Bourn proved that the classical $3\times 3$-Lemma, well known to be valid for algebraic structures such as groups and modules, may be extended to pointed regular protomodular categories (i.e., pointed and regular categories where the \emph{Split Short Five Lemma} holds). In the context of a semi-abelian category (pointed protomodular, Barr exact with binary coproducts), it amounts to the following: the diagram in Figure~\ref{3x3 diag} is a \defn{$3\times 3$-diagram} when all of its rows and columns are \emph{exact sequences}, which for the bottom horizontal sequence means that $k=\ker(f)$, $f=\coker(k)$. (In other words, the couple of arrows $(k,f)$ is a \emph{short exact sequence}.) The \defn{$3\times 3$-Lemma} gives necessary and sufficient conditions for a given diagram to be a $3\times 3$-diagram. For instance, if all rows and two out of three of the columns are exact, while the middle column is null, then the third column is exact as well~\cite[Theorem~12]{Bourn2001}. 

An important question in practice is how to construct a $3\times 3$-diagram out of a given commutative square $(*)$ or $(**)$ by taking kernels or cokernels, respectively. It is well known that in a semi-abelian category, the square $(**)$ induces a $3\times 3$-diagram by taking cokernels if and only if it is a pullback square, all of whose arrows are normal monomorphisms (=~kernels). On the other hand, $(*)$ induces a $3\times 3$-diagram by taking kernels if and only if it is a pushout square, all of whose arrows are normal epimorphisms (=~cokernels). To see this, it suffices for instance to combine Lemma~4.2.5 in~\cite{Borceux-Bourn} with Proposition~3.3 in~\cite{EGVdL} and the explanation given there.

\subsection{The denormalised $3\times 3$-Lemma}\label{Subsection Denorm}
In \cite{Bourn2003}, Dominique Bourn extended the above to a non-pointed setting. In the context of a regular category, a \defn{(denormalised) $3\times 3$-diagram} is a diagram as in Figure~\ref{denorm 3x3 diag}, where all rows and columns are \defn{exact (reflexive) forks}, which for the bottom horizontal sequence means that the reflexive graph $(c,d)$ is the kernel relation of $f$, and $f$ is the coequaliser of $d$ and $c$.
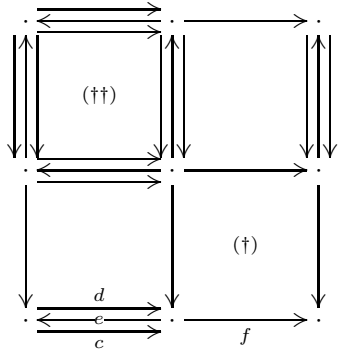
\begin{figure}
$\vcenter{\xymatrix@=2em{\cdot\ar@{}[rrdd]|-{(\dagger\dagger)} \ar@<1ex>[rr]\ar@<-1ex>[rr]\ar@<1ex>[dd]\ar@<-1ex>[dd]_{}&&\cdot \ar[ll]\ar@<1ex>[dd]^{}\ar@<-1ex>[dd]_{}\ar[rr]&&\cdot\ar@<1ex>[dd]\ar@<-1ex>[dd]\\
&&&&\\
\cdot \ar[uu]\ar@<1ex>[rr]\ar@<-1ex>[rr]\ar[dd]&&\cdot\ar@{}[rrdd]|-{(\dagger)}\ar[ll]\ar[uu]\ar[dd]\ar[rr] &&\cdot \ar[uu]\ar[dd]^{}\\
&&&&\\
\cdot\ar@<1ex>[rr]^-d\ar@<-1ex>[rr]_-{c}&&\cdot\ar[ll]|-{e}\ar[rr]_-f&&\cdot }}$
\caption{A denormalised $3\times 3$-diagram: all rows and columns are exact forks.}\label{denorm 3x3 diag}
\end{figure}
The \defn{denormalised $3\times 3$-Lemma} gives necessary and sufficient conditions for a given diagram of arrows and reflexive graphs as in Figure~\ref{denorm 3x3 diag} to be a $3\times 3$-diagram. For instance, if all rows and two out of three of the columns including the middle one are exact forks, then the third column is an exact fork as well~\cite[Theorem~3.1]{Bourn2003}. However, this only works if the context is sufficiently strong: the article~\cite{Bourn2003} treats the case of a regular Mal'tsev category, but variations on this theme have been considered in more general environments~\cite{Lack, ZJanelidze-Snake, Gran-Rodelo}; in~\cite{GJR}, the pointed and unpointed cases are even studied in a single framework.

As in the pointed case, we are interested in characterising when a square $(\dagger)$ or a double reflexive graph $(\dagger\dagger)$ as in Figure~\ref{denorm 3x3 diag} induces a $3\times 3$-diagram by taking kernel pairs or coequalisers, respectively. In the article~\cite{Bourn2003}, in the context of a Barr exact Mal'tsev category $\X$ it is proved that $(\dagger\dagger)$ induces a $3\times 3$-diagram by taking coequalisers if and only if it is a so-called \emph{parallelistic double equivalence relation}. Given two equivalence relations $R$ and $S$ on an object~$X$, a double equivalence relation over them is said to be \defn{parallelistic} when it is isomorphic to the largest double equivalence relation $D$ on $R$ and $S$, which is written $R\square S$ as in~Figure~\ref{Figure Square}.
\begin{figure}
$\vcenter{\xymatrix@=2em{R\square S \ar@<1ex>[rr]^-{\pi_2^S} \ar@<-1ex>[rr]_-{\pi_1^S}\ar@<1ex>[dd]^-{\pi_2^R}\ar@<-1ex>[dd]_-{\pi_1^S}&&S\ar[ll]\ar@<1ex>[dd]^-{s_2}\ar@<-1ex>[dd]_-{s_1}\\\\
R \ar[uu]\ar@<1ex>[rr]^-{r_1}\ar@<-1ex>[rr]_-{r_2}&& X\ar[ll]\ar[uu]}}
\qquad
\vcenter{\xymatrix@=4em{R\square
S \pullback \ar[r]^-{\langle\pi_{1}^{S},\pi_{2}^{S}\rangle} \ar[d]_-{\langle\pi_{1}^{R},\pi_{2}^{R}\rangle} & S \times S\ar[d]^-{\langle s_1,s_2\rangle\times\langle s_1,s_2\rangle}\\
R\times R\ar[r]_-{\langle r_1\times r_1
,r_2\times r_2\rangle}& X^4 }}$
\caption{The parallelistic double equivalence relation $R\square S$.}\label{Figure Square}
\end{figure}
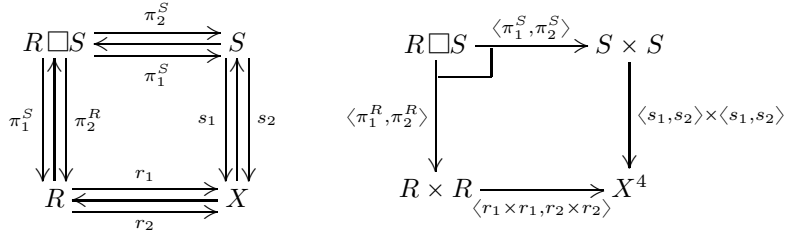
In other words, the forgetful functor $U\colon\ERel^2(\X)\to \ERel(\X)\times \ERel(\X)$ that sends a double equivalence relation~$D$ over $R$ and $S$ to the couple $(R,S)$ has a right adjoint
\[
\xymatrix{\ERel^2(\X) \ar@<1ex>[r]^-{U} \ar@{}[r]|-{\bot} & \ERel(\X)^2, \ar@<1ex>[l]^-{\square}}
\]
which takes two equivalence relations $R$ and $S$ and sends them to the parallelistic double equivalence relation~$R\square S$. It is obtained via the pullback in Figure~\ref{Figure Square}; the ``elements'' of $R\square S$ are quadruples $(x, y, t, z)\in X^4$ such that
\[
\vcenter{\xymatrix@!0@=2em{x \ar@{}[d]|{ R} \ar@{}[r]|{ S} & y \ar@{}[d]|{ R}\\
t \ar@{}[r]|{ S} & z}}
\qquad\qquad\text{$xSy$, $tSz$, $xRt$ and $yRz$.}
\]

On the other hand, a square $(\dagger)$ as in Figure~\ref{denorm 3x3 diag} induces a $3\times 3$-diagram by taking kernel pairs if and only if it is a \defn{regular pushout square} or a \defn{double} (or \defn{$2$-cubic}) \defn{extension}, which means that all of its arrows, as well as the induced comparison to the pullback, are regular epimorphisms (=~coequalisers of some pair of parallel arrows)---see Figure~\ref{Figure Regular Pushout}. 
\begin{figure}
$\vcenter{\xymatrix{A_1\ar@{-{ >>}}@/^/[rrd]^-{f_1} \ar@{-{ >>}}@/_/[rdd]_-a \ar@{-{ >>}}[rd]|-{\langle a,f_1\rangle} &&\\
&A_0\times_{B_0}B_1 \pullback\ar@{-{ >>}}[r]\ar@{-{ >>}}[d] &B_1\ar@{-{ >>}}[d]^-b\\
&A_0\ar@{-{ >>}}[r]_-{f_0} &B_0}}$
\caption{The outer square is a \emph{regular pushout} when all arrows in the induced diagram are regular epimorphisms.}\label{Figure Regular Pushout}
\end{figure}
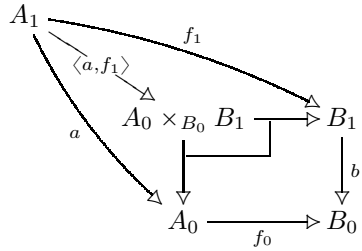
In general, pushouts and regular pushouts do not coincide; by Theorem~5.7 in~\cite{Carboni-Kelly-Pedicchio}, a regular category is an exact Mal'tsev category precisely when every pushout of two regular epimorphisms is a regular pushout. So in the context of semi-abelian categories we regain the characterisation recalled above: the normalisation of an equivalence relation is a normal monomorphism, and a double equivalence relation normalises to a pullback of normal monomorphisms.

\subsection{Higher regular epimorphisms, higher cubic extensions} 
Our aim is now to extend this to higher degrees. It is, however, crucial here to recall~\cite{EGVdL} that even in a semi-abelian category, $n$-fold regular epimorphisms do not need to satisfy the extension condition when~${n\geq 3}$.

In a regular category~$\X$, one-cubic extensions are just regular epimorphisms, which, in the varietal case, are exactly the surjective morphisms. We write $\Reg(\X)$ for the full subcategory of the 
category of arrows $\Arr(\X)$ in~$\X$ determined by the regular epimorphisms. It is well known and easily seen that this is again a regular category, whose regular epimorphisms, viewed in the base category~$\X$, are pushout squares of regular epimorphisms. Repeating this process inductively we find the full subcategory $\Reg^n(\X)=\Reg(\Reg^{n-1}(\X))$ of the category of $n$-fold 
arrows $\Arrn(\X)=\Arr(\Arr^{n-1}(\X))$ determined by the $n$-fold regular epimorphisms. Viewed in $\X$, its objects are $n$-cubes, all of whose squares are pushouts of regular epimorphisms.

We recall the inductive definition of a higher cubic extension. (Our terminology here follows~\cite{PVdL1}, where in accordance with~\cite{Yoneda-Exact-Sequences}, ``extension'' means ``short exact sequence'' rather than ``regular epimorphism''; we here call ``cubic extension'' what is called ``extension'' in~\cite{EGVdL, EGoeVdL} and elsewhere.) Denoting by $\E$ the class of cubic extensions in~$\X$, a \defn{$2$-cubic extension} (also called a \defn{double extension}) in $\X$ is a commutative square as in Figure~\ref{Figure Regular Pushout} where the morphisms $a$, $b$, $f_1$, $f_0$ and the universally induced morphism $\langle a,f_1\rangle\colon{A_1\to A_0\times_{B_0} B_1}$ to the pullback of $b$ and $f_0$ are in $\E$. We denote the class of $2$-cubic extensions thus obtained by $\E^{1}$. Of course this definition does not depend on the exact nature of one-dimensional extensions, so it can be used for any (reasonable) class of morphisms $\E$. In particular, it can be iterated to give $n$-cubic extensions for any $n\geq 2$: then all the arrows in the induced diagram are $(n-1)$-cubic extensions. We write $\Ext(\X)$ for the full subcategory of the category of arrows $\Arr(\X)$ in $\X$ determined by the extensions (=~$\Reg(\X)$), and 
similarly~$\Extn(\X)$ for the full subcategory of $\Arrn(\X)$ determined by the $n$-cubic extensions.

\subsection{Arrows versus reflexive graphs}

Let $\X$ be a category in which every arrow has a kernel pair and every reflexive graph has a coequaliser. We consider the basic coequaliser/kernel pair adjunction
\[
\xymatrix{\RG(\X) \ar@<1ex>[r]^-{\Coeq} \ar@{}[r]|-{\bot} & \Arr(\X) \ar@<1ex>[l]^-{\Eq}}
\]
between the category $\RG(\X)$ of reflexive graphs and the category $\Arr(\X)$ of arrows in $\X$. The left adjoint sends a reflexive graph $(G,d,c,e)$
\[
\xymatrix{G \ar@<1ex>[r]^-{d} \ar@<-1ex>[r]_-{c} & X \ar[l]|-{e} } \qquad\qquad d\comp e=1_X=c\comp e
\]
to the coequaliser $\Coeq(d,c)$ of $d$ and $c$, while the right adjoint sends an arrow~$f\colon{X\to Y}$ to its kernel relation $(\Eq(f), \pi_1,\pi_2,\Delta_X)$. 

We shall be concerned with restricting this adjunction to an adjoint equivalence. By definition, an arrow is a regular epimorphism if and only if it is the coequaliser of some parallel pair of maps. Equivalently, it is the coequaliser of its kernel pair. So the image of the left adjoint $\Coeq$ is the full subcategory $\Reg(\X)$ of $\Arr(\X)$ determined by the regular epimorphisms. On the other hand, a reflexive graph is in the image of the functor $\Eq$ precisely when it is an effective equivalence relation. Writing $\EERel(\X)$ for the category of effective equivalence relations in $\X$, the above adjunction restricts to an equivalence of categories
\[
\xymatrix{\EERel(\X) \ar@<1ex>[r]^-{\Coeq} \ar@{}[r]|-{\simeq} & \Reg(\X). \ar@<1ex>[l]^-{\Eq}}
\]
When $\X$ is Barr exact, we may take the category $\ERel(\X)$ of equivalence relations in $\X$ on the left; and when $\X$ is, moreover, a Mal'tsev category, we may take the category $\RRel(\X)$ of reflexive relations instead. When $\X$ is a regular category, the existence of the respective equivalence characterises when $\X$ is Barr exact (equivalence relations are effective) or Barr exact Mal'tsev (reflexive relations are effective equivalence relations). Note that in a Barr exact Mal'tsev category, any reflexive graph has a coequaliser, since this coequaliser may be computed as the coequaliser of the support of the graph, which is a reflexive relation, hence an effective equivalence relation.

\subsection{Higher arrows versus higher reflexive graphs}\label{Subsection Higher Arrows}
Our next aim is to extend this adjunction, and the induced equivalence of categories, to higher arrows and higher reflexive graphs. In the case of double arrows and double reflexive graphs we may compose adjunctions as in
\[
\xymatrix{\RG(\RG(\X)) \ar@<1ex>[r]^-{\Coeq} \ar@{}[r]|-{\bot} & \Arr(\RG(\X)) \ar@<1ex>[l]^-{\Eq} \ar@<1ex>[r]^-{\Arr(\Coeq)} \ar@{}[r]|-{\bot} & \Arr(\Arr(\X)) \ar@<1ex>[l]^-{\Arr(\Eq)}}
\]
in order to obtain an adjunction
\[
\xymatrix{\RG^2(\X) \ar@<1ex>[r]^-{\Coeq^2} \ar@{}[r]|-{\bot} & \Arr^2(\X) .\ar@<1ex>[l]^-{\Eq^2}}
\]
Here we write $\Arr\colon{\mathsf{CAT}\to \mathsf{CAT}}$ for the functor which sends a category $\X$ to the category of arrows $\Arr(\X)$, and a functor $F\colon{X\to Y}$ to the naturally induced functor $\Arr(F)\colon{\Arr(\X)\to \Arr(\Y)}$. The left adjoint $\Coeq^2$ takes a double reflexive graph and sends it to the coequaliser of its coequaliser, whereas the right adjoint $\Eq^2$ takes a double arrow and sends it to the kernel pair of its kernel pair. (See Figure~\ref{denorm 3x3 diag}, where we can make the arbitrary choice of viewing the objects of $\Arr(\X)$ as vertical arrows, and the arrows between those horizontally.) A square is in the image of $\Coeq^2$ when it is a double regular epimorphism: a pushout square whose arrows are regular epimorphisms. On the other hand, the image of $\Eq^2$ consists of what we shall call \defn{double effective equivalence relations}. Thus the adjunction restricts to an equivalence 
\[
\xymatrix{\EERel^2(\X) \ar@<1ex>[r]^-{\Coeq^2} \ar@{}[r]|-{\simeq} & \Reg^2(\X). \ar@<1ex>[l]^-{\Eq^2}}
\]
As explained above, when a double equivalence relation is effective was essentially characterised in~\cite{Bourn2003}: they are the so-called \emph{parallelistic double equivalence relations}. In a regular Mal'tsev category, any double effective equivalence relation is uniquely determined by the underlying effective equivalence relations $R$ and $S$ as the double equivalence relation $R\square S$, and any choice or $R$ and $S$ determines one such (see Figure~\ref{Figure Square}). This may be explained by the fact that the left adjoint $\Coeq^2\colon {\ERel^2(\X)\to \Reg^2(\X)}$ can be written as a composite of left adjoints like in Figure~\ref{Figure Adjunction}.
\begin{figure}
\[
\xymatrix@!0@=8em{\ERel^2(\X) \ar@<1ex>[r]^-{U} \ar@{}[r]|-{\bot} & \ERel(\X)^2 \ar@<1ex>[l]^-{\square}
\ar@<1ex>[r]^-{(\Coeq)^2} \ar@{}[r]|-{\bot} & \Reg(\X)^2 \ar@<1ex>[l]^-{(\Eq)^2} \ar@<1ex>[r]^-{\Push} \ar@{}[r]|-{\simeq} & \ar@<1ex>[l]^-{\Forget} \Reg^2(\X)}
\]
\[
\vcenter{\xymatrix@!0@=4em{\cdot \ar@<1ex>[r] \ar@<-1ex>[r]\ar@<1ex>[d]\ar@<-1ex>[d] & S\ar[l]\ar@<1ex>[d]\ar@<-1ex>[d]
& & S\ar@<1ex>[d]\ar@<-1ex>[d]
& X \ar@{-{ >>}}[r] \ar@{-{ >>}}[d] & X/_R 
& X \ar@{-{ >>}}[r] \ar@{-{ >>}}[d] & X/_R \ar@{-{ >>}}[d]\\
R \ar[u]\ar@<1ex>[r]\ar@<-1ex>[r]& X\ar[l]\ar[u]
& R \ar@<1ex>[r]\ar@<-1ex>[r]& X\ar[l]\ar[u]
& X/_S & 
& X/_S \ar@{-{ >>}}[r] & \cdot}}
\]
\caption{For double equivalence relations, parallelistic = effective.}\label{Figure Adjunction}
\end{figure}
Since the two left adjoints coincide by construction, the two right adjoints are naturally isomorphic. In particular, they have the same replete image.

As already mentioned above, it follows from~\cite[Theorem~5.7]{Carboni-Kelly-Pedicchio} that $\X$ is Barr exact Mal'tsev if and only if every double regular epimorphism is a double extension. So in that context, through the above equivalence, double extensions admit a characterisation in terms of parallelistic double equivalence relations. Outside the exact Mal'tsev context, however, the category $\Ext^2(\X)$ is strictly smaller than $\Reg^2(\X)$, so if we want the equivalence above to describe double extensions in terms of double equivalence relations, then it must be restricted to a smaller category on the left. As explained by Proposition~5.4 in~\cite{Carboni-Kelly-Pedicchio}, we need to add the condition that the join $R\join S=RS=SR$ is an effective equivalence relation. We do, however, not wish to take into account right now these kinds of difficulties having to do with working in a non-exact context, so from now on we shall restrict our analysis to Barr exact Mal'tsev categories.

The construction above may be repeated to yield an adjunction 
\[
\xymatrix{\RG^n(\X) \ar@<1ex>[r]^-{\Coeq^n} \ar@{}[r]|-{\bot} & \Arr^n(\X)\ar@<1ex>[l]^-{\Eq^n}}
\]
for any $n\geq1$: it suffices to put
\[
\Coeq^n=\Arr(\Coeq^{n-1}) \comp \Coeq=\Arr^{n-1}(\Coeq)\comp \cdots \comp \Arr(\Coeq) \comp \Coeq
\]
and
\[
\Eq^n=\Eq\comp \Arr(\Eq^{n-1})=\Eq\comp \Arr(\Eq)\comp\cdots\comp\Arr^{n-1}(\Eq),
\]
which are adjoint by composition of adjoints. As above, the image of $\Coeq^n$ is the category of $n$-fold regular epimorphism, whereas the image of $\Eq^n$ consists of what we shall call \defn{$n$-fold effective equivalence relations}. We thus obtain the equivalence 
\[
\xymatrix{\EERel^n(\X) \ar@<1ex>[r]^-{\Coeq^n} \ar@{}[r]|-{\simeq} & \Reg^n(\X). \ar@<1ex>[l]^-{\Eq^n}}
\]
In particular, an $n$-fold reflexive graph $G$ is an effective $n$-fold equivalence relation if and only if $G=\Eq^n(\Coeq^n(G))$; an $n$-fold arrow $F$ is an $n$-fold regular epimorphism if and only if $F=\Coeq^n(\Eq^n(F))$.

One interesting question which arises naturally in this context is to characterise the $n$-fold effective equivalence relations in an exact Mal'tsev category. Here we may extend the case $n=2$ by introducing the concept of an \emph{$n$-fold parallelistic equivalence relation} as in~\cite{RVdL2}. We follow a quick pragmatic course: we have the forgetful functor $U$ which takes an $n$-fold equivalence relation on $R_0$, \dots, $R_{n-1}$ and sends it to $(R_0,\dots, R_{n-1})$; its left adjoint
\[
\xymatrix{\ERel^n(\X) \ar@<1ex>[r]^-{U} \ar@{}[r]|-{\bot} & \ERel(\X)^n \ar@<1ex>[l]^-{\square}}
\]
takes an $n$-tuple of equivalence relations $(R_i)_{i\in n}$ and sends it to the $n$-fold equivalence relation $\bigboxvoid_{i\in n}R_i$. If we call an $n$-fold equivalence relation \defn{parallelistic} when it is in the replete image of this left adjoint, then the adjunction restricts to an equivalence between the category $\ERel(\X)^n$ of $n$-tuples of equivalence relations and the category $\PERel^n(\X)$ of parallelistic $n$-fold equivalence relations in $\X$. We may now view the functor $\Coeq^n\colon{\ERel^n(\X)\to \Reg^n(\X)}$ as the composite of left adjoints
\[
\xymatrix{\ERel^n(\X) \ar[r]^-U & \ERel(\X)^n \ar[r]^-{(\Coeq)^n} & \Reg(\X)^n \ar[r]^-{\Push} & \Reg^n(\X),}
\]
where the functor $\Push$ sends an $n$-tuple of regular epimorphisms with a common domain to the $n$-fold regular epimorphism induced by taking repeated pushouts. Hence the right adjoint $\Eq^n\colon {\Reg^n(\X)\to \ERel^n(\X)}$ is naturally isomorphic to the composite of right adjoints
\[
\xymatrix{\ERel^n(\X) & \ERel(\X)^n \ar[l]^-{\square} & \Reg(\X)^n \ar[l]^-{(\Eq)^n} & \Reg^n(\X), \ar[l]^-{\Forget} }
\]
so that, in particular, an $n$-fold equivalence relation in $\X$ is effective if and only if it is parallelistic. In~\cite{RVdL2} a description of the relation $(R_i)_{i\in n}$ in terms of $2^n$-matrices is given, extending the one of Subsection~\ref{Subsection Denorm}. Incidentally, this also proves that the forgetful functor~$U$ is indeed a left adjoint.

Unfortunately, when $n>2$, even in the exact Mal'tsev context, $n$-fold regular epimorphisms and $n$-cubic extensions are generally different. Parallelistic higher equivalence relations do characterise higher regular epimorphisms; we are, however, less interested in the $n$-fold regular epimorphisms themselves, since the objects occurring naturally in (co)homology are the $n$-cubic extensions. What we are thus looking for is a restriction of this equivalence to the category of $n$-cubic extensions on the right, which yields a characterisation of $n$-cubic extensions in terms of parallelistic equivalence relations satisfying an additional condition. As we shall see (Theorem~\ref{3^n iff distributive parallelistic}), the parallelistic equivalence relation $\bigboxvoid_{i\in n}R_i$ determined by the relations $(R_i)_i$ corresponds to an extension via this equivalence, precisely when a local congruence distributivity condition holds: \emph{joins of those relations distribute over meets} or, more precisely, if $J_0$, $J_1$, \dots, $J_k\subseteq n$ with $k\geq 1$ such that
$J_i \cap J_j=\emptyset$ for all $i\neq j$, then
\[
 \big(\bigmeet_{j\in J_0}R_j\big)\meet \bigjoin_{i=1}^k\big(\bigmeet_{j\in J_i}R_j\big)=
 \bigjoin_{i=1}^k\big( \bigmeet_{j\in J_0\cup J_i}R_j\big).
\]
For instance, when $n=3$, we find the equalities
\begin{align*}
	R_2\meet(R_0\join R_1)&=(R_2\meet R_0)\join (R_2\meet R_1)\\
	R_1\meet(R_0\join R_2)&=(R_1\meet R_0)\join (R_1\meet R_2)\\
	R_0\meet(R_1\join R_2)&=(R_0\meet R_1)\join (R_0\meet R_2),	
\end{align*}
while when $n=4$ we also find conditions such as 
\begin{align*}
	R_3\meet(R_0\join R_1\join R_2)&=(R_3\meet R_0)\join (R_3\meet R_1)\join (R_3\meet R_2)\\
	(R_3\meet R_2)\meet(R_0\join R_1)&=(R_3\meet R_2\meet R_0)\join (R_3\meet R_2\meet R_1)\\
	R_3\meet (R_2\join(R_0\meet R_1))&=(R_3\meet R_2)\join(R_3\meet R_0\meet R_1).
\end{align*}
In fact, these conditions are not generated by distributivity of any three equivalence relations chosen out of $R_0$, $R_1$, $R_2$, $R_3$---see Remark~\ref{Remark Not Generated by Binary} and Example~\ref{Complexes}.

\subsection{Back to the pointed case}
As in the case of $n=2$, we may return to the pointed setting of a semi-abelian category, and ask ourselves the question under which conditions a collection of normal subobjects $(K_i)_{0\leq i< n}$ on an object~$X$ induces a $3^n$-diagram, and thus an $n$-cubic extension, by first taking intersections, and then cokernels of those intersections. 

For instance, when $n=3$, the following situation can arise. Suppose that in Figure~\ref{Figure 3x3x3}, first the solid cube is constructed by pushing out the maps $f_i\colon{F_3\to F_3/K_i}$ along each other, then the rest of the diagram by taking kernels in all directions. Then there is a priori no reason why the square of regular epimorphisms in the back of the diagram is a pushout, which means that it is not clear whether or not the top horizontal sequence is exact on the right. So in general this procedure need not deliver a $3\times 3\times 3$-diagram.

We find (Theorem~\ref{Theorem 3^n} and Corollary~\ref{Corollary 3^n}) that this \emph{does} happen when 
\begin{equation}\label{Eq Distr Norm}
	 \big(\bigmeet_{j\in J_0}K_j\big)\meet \bigjoin_{i=1}^k\big(\bigmeet_{j\in J_i}K_j\big)=
 \bigjoin_{i=1}^k\big( \bigmeet_{j\in J_0\cup J_i}K_j\big)
\end{equation}
whenever $J_0$, $J_1$, \dots, $J_k\subseteq n$ with $k\geq 1$ such that
$J_i \cap J_j=\emptyset$ for all $i\neq j$.

\section{Higher extensions in exact Mal'tsev categories}\label{Section Exact Mal'tsev}
In this section we develop the non-pointed theory of $3^n$-diagrams in an exact Mal'tsev context.

\begin{definition}[Reflexive fork, fork, exact fork]
Let $\X$ be a category. A \defn{reflexive fork} or \defn{augmented reflexive graph} $F$ in $\X$ is a diagram
\begin{equation}\label{Diagram Fork}
	\xymatrix@!0@=3em{F_2 \ar@<1ex>@{->}[rr]^-{d} \ar@<-1ex>@{->}[rr]_-{c} && F_1 \ar@{->}[rr]^f \ar[ll]|-{e} && F_{0}}
\end{equation}
where $f\comp d=f\comp c$ and $d\comp e=1_{F_1}=c\comp e$. A reflexive fork in $\X$ may be seen as a functor $(\ThreeCat^+)^{\op}\to \X$, where $\ThreeCat^+$ is the $2$-truncation of the category $\Delta_+$ of finite ordinals and order-preserving maps. It has three objects $0$, $1$ and $2$ and arrows between them as in Diagram~\eqref{Diagram Fork}. We write $\Fork(\X)$ for the functor category $\Fun((\ThreeCat^+)^{\op},\X)$, the category of reflexive forks in $\X$ and natural transformations between them. Since there are no non-reflexive forks in this article, from now on we shall commonly drop the adjective ``reflexive'' and call an object of $\Fork(\X)$ a~\defn{fork}.

A fork $F$ is \defn{exact} when $(F_2,d,c,e)$ is the kernel relation of $f$ and $f$ is the coequaliser of~$(d,c)$. We write $\EFork(\X)$ for the full subcategory of $\Fork(\X)$ determined by the exact forks.
\end{definition}

\begin{definition}[Underlying arrow, underlying reflexive graph]
The \defn{underlying arrow} of a fork $F$ as in Diagram~\eqref{Diagram Fork} is the morphism $f$. Sending forks to their underlying arrows determines a functor $\arr\colon{\Fork(\X)\to \Arr(\X)}$.

The \defn{underlying reflexive graph} of $F$ is $(F_2,d,c,e)$. Sending forks to their underlying reflexive graphs determines a functor $\grph\colon{\Fork(\X)\to \RG(\X)}$.
\end{definition}

The functor $\arr$ has a right adjoint $\EqFork$ which takes an arrow $f$ and sends it to the fork~\eqref{Diagram Fork} where $(d,c)$ is the kernel pair of $f$. The functor $\grph$ has a left adjoint $\CoeqFork$ which takes a reflexive graph $(F_1,d,c,e)$ and sends it to the fork~\eqref{Diagram Fork} where~$f$ is the coequaliser of $(d,c)$. We find a commutative triangle of adjunctions as in~Figure~\ref{Figure Triangle}.
\begin{figure}
$\resizebox{.5\textwidth}{!}
{\xymatrix@!0@R=8em@C=8em{\RG(\X) \ar@<1ex>[rr]^-{\Coeq} \ar@<1ex>[rd]^-{\CoeqFork} \ar@{}[rr]|-{\bot} \ar@{}[rd]|-{\rtop} && \Arr(\X) \ar@<1ex>[ld]^-{\EqFork} \ar@<1ex>[ll]^-{\Eq}\\
& \Fork(\X) \ar@<1ex>[lu]^-{\grph} \ar@<1ex>[ru]^-{\arr} \ar@{}[ru]|-{\ltop}}}$
\caption{Adjunctions between reflexive graphs, forks and arrows.}\label{Figure Triangle}
\end{figure}
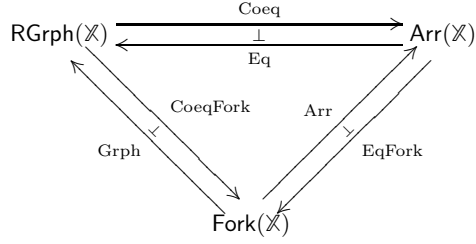

\begin{remark}
The processes that associate to a category $\X$ the category of arrows in $\X$ and the category of forks in $\X$ may be seen as functors $\Arr\colon{\mathsf{CAT}\to \mathsf{CAT}}$ and $\Fork\colon{\mathsf{CAT}\to \mathsf{CAT}}$. For instance, a functor $G\colon {\X\to \Y}$ is thus sent to the functor 
\[
\Fork(G)\colon \Fork(\X)\to \Fork (\Y)
\]
which takes a fork $F\colon{(\ThreeCat^+)^{\op}\to \X}$ in $\X$ and gives back the fork $G\comp F\colon{(\ThreeCat^+)^{\op}\to \Y}$ in $\Y$, and which acts similarly obviously on natural transformations.
\end{remark}

\begin{definition}[$n$-Fork, exact $n$-fork]
	An \defn{$1$-fork} is a fork, and when $n\geq 2$, an \defn{$n$-fork} is a fork in the category of $(n-1)$-forks. We write
\[
\Fork^n(\X)=\Fork(\Fork^{n-1}(\X))\simeq\Fun(((\ThreeCat^+)^{\op})^n,\X)
\]
for the category of $n$-forks and natural transformations between them. For $n=2$ we find the diagram in Figure~\ref{Figure n-Fork}.
\begin{figure}
$
{\xymatrix@1@!0@=2em{F_{2,2}\ar@<1ex>[rr]\ar@<-1ex>[rr]\ar@<1ex>[dd]\ar@<-1ex>[dd]_{}&&F_{1,2}\ar[ll]\ar@<1ex>[dd]^{}\ar@<-1ex>[dd]_{}\ar[rr]&&F_{0,2}\ar@<1ex>[dd]\ar@<-1ex>[dd]\\
&&&&\\
F_{2,1}\ar[uu]\ar@<1ex>[rr]\ar@<-1ex>[rr]\ar[dd]&&F_{1,1}\ar[dd]\ar[rr]\ar[uu]\ar[ll]&&F_{0,1}\ar[dd]^{} \ar[uu]\\
&&&&\\
F_{2,0}\ar@<1ex>[rr]\ar@<-1ex>[rr]_{}&&F_{1,0}\ar[ll]\ar[rr]&&F_{0,0} }}$
\caption{A $2$-fork $F$.}\label{Figure n-Fork}
\end{figure}
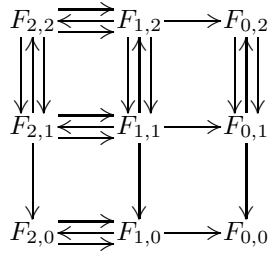

An $n$-fork $F$ is \defn{exact} when all forks in the diagram $F$ in $\X$ are exact. More precisely, this happens when for all $e\in 3^n$ and all $i\in 3$ the composite
\[
\xymatrix@1@!0@=2em{(\ThreeCat^+)^{\op}\ar@{->}[rrrrr]^-{(\alpha^+_{e,i})^{\op}}&&&&&((\ThreeCat^+)^{\op})^n\ar@{->}[rrrr]^-{F}&&&&\X
}
\]
is an exact fork, where 
\[
\alpha^+_{e,i}\colon \ThreeCat^+\to (\ThreeCat^+)^n\colon k\mapsto (e_1,\dots,e_{i-1},k,e_{i+1},\dots,e_n).
\]
An exact $n$-fork is also called a \defn{$3^n$-diagram}.
\end{definition}

\begin{definition}[Underlying $n$-fold arrow, underlying $n$-fold reflexive graph]
The \defn{underlying $n$-fold arrow} and the \defn{underlying $n$-fold reflexive graph} of an $n$-fork, as well as the respective adjoints, are defined inductively by composition of functors as follows---see also Figure~\ref{Figure n-Triangle}:
\begin{align*}
	\CoeqFork^n&=\Fork(\CoeqFork^{n-1}) \comp \CoeqFork\\
	&\qquad= \Fork^{n-1}(\CoeqFork)\comp \cdots \comp \Fork(\CoeqFork) \comp \CoeqFork,\\
	\grph^n&=\grph\comp \Fork(\grph^{n-1})=\grph\comp \Fork(\grph)\comp\cdots\comp\Fork^{n-1}(\grph),\\
	\arr^n&=\Arr(\arr^{n-1}) \comp \arr=\Arr^{n-1}(\arr)\comp \cdots \comp \Arr(\arr) \comp \arr,\\
	\EqFork^n&=\EqFork\comp \Arr(\EqFork^{n-1})\\
	&\qquad=\EqFork\comp \Arr(\EqFork)\comp\cdots\comp\Arr^{n-1}(\EqFork).
\end{align*}
\end{definition}

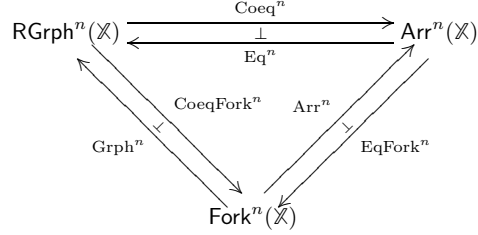
\begin{figure}
$\resizebox{.5\textwidth}{!}
{\xymatrix@!0@R=8em@C=8em{\RG^n(\X) \ar@<1ex>[rr]^-{\Coeq^n} \ar@<1ex>[rd]^-{\CoeqFork^n} \ar@{}[rr]|-{\bot} \ar@{}[rd]|-{\rtop} && \Arr^n(\X) \ar@<1ex>[ld]^-{\EqFork^n} \ar@<1ex>[ll]^-{\Eq^n}\\
& \Fork^n(\X) \ar@<1ex>[lu]^-{\grph^n} \ar@<1ex>[ru]^-{\arr^n} \ar@{}[ru]|-{\ltop}}}$
\caption{Adjunctions between $n$-fold reflexive graphs, forks and arrows.}\label{Figure n-Triangle}
\end{figure}

\begin{remark}
The triangles in Figure~\ref{Figure n-Triangle} do indeed commute: for instance,
\[
\grph^n\comp \EqFork^n=\Eq^n,
\]
because $\grph\comp \EqFork=\Eq$ by definition, and 
\begin{align*}
	\grph^n\comp \EqFork^n&=\grph\comp\Fork(\grph^{n-1})\comp \EqFork\comp\Arr(\EqFork^{n-1})\\
	&=\grph\comp \EqFork\comp\Arr(\grph^{n-1})\comp\Arr(\EqFork^{n-1})\\
	&=\Eq\comp\Arr(\Eq^{n-1})
	=\Eq^n,
\end{align*}
since in a functor category, kernel pairs are computed pointwise, so that the diagram
\[
\xymatrix@C=5em{\Arr(\Fork^{n-1}(\X)) \ar[d]_-{\EqFork} \ar[r]^-{\Arr(\grph^{n-1})} & \Arr(\RG^{n-1}(\X)) \ar[d]^-{\EqFork}\\
\Fork(\Fork^{n-1}(\X)) \ar[r]_-{\Fork(\grph^{n-1})} & \Fork(\RG^{n-1}(\X))}
\]
commutes.
\end{remark}

\begin{proposition}[\cite{EGoeVdL}]\label{Proposition exact-fork}
Suppose that $\X$ is an exact Mal'tsev category and $n\geq 0$. Let
us consider in $\Arr^n(\X)$ the exact fork
\[
\xymatrix{\Eq(f)\ar@<1.ex>@{->}[r]^-{\pi_1}\ar@<-1.ex>@{->}[r]_-{\pi_2}&B\ar[l]|-{\Delta_B}\ar@{->>}[r]^f
&A}
\]
such that $B$ is an $n$-cubic extension. Then $f$ is an $(n+1)$-cubic extension if and only if $\Eq(f)$ is an $n$-cubic extension.\noproof
\end{proposition}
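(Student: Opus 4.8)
The plan is to prove the equivalence by induction on $n$. The case $n=0$ is vacuous: every object of $\X=\Arr^{0}(\X)$ counts as a $0$-cubic extension, so both conditions hold for any regular epimorphism $f$. For the inductive step I would work with the recursive definition of a higher cubic extension written out ``all at once'': an $m$-fold arrow, seen as an $m$-cube in $\X$, is an $m$-cubic extension exactly when, for every nonempty set $I$ of its coordinate directions and every vertex $w$ of the complementary cube, the comparison morphism from the initial vertex of the resulting $(I,w)$-slice to the limit of that slice with its initial vertex deleted is a regular epimorphism. (For $m=2$ this is precisely the list of five regular epimorphisms defining a regular pushout; in general it is a routine unfolding of the inductive definition.)

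Now view $f\colon B\to A$ as an $(n+1)$-fold arrow, the new direction being that of $f$. Among the conditions above: those whose set $I$ avoids the new direction say exactly that $B$ and $A$ are $n$-cubic extensions; the condition for $I$ equal to the singleton new direction says that $f$ is a levelwise regular epimorphism, which holds because $f$ is a regular epimorphism in the functor category $\Arrn(\X)$; and each remaining condition, indexed by a nonempty set $K$ of old directions and a vertex $w$, asserts that a certain commutative square $\mathcal{Q}_{K,w}$ of regular epimorphisms --- with top edge a component $f^{\top}$ of $f$, vertical edges the $(K,w)$-comparisons of $B$ and of $A$, and bottom edge the morphism $f^{\lim}$ that $f$ induces on the relevant punctured-subcube limits --- is a regular pushout. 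On the other side, $\Eq(f)$ is the $n$-cube carrying $\Eq(f_{v})$ at the vertex $v$; since kernel pairs commute with limits, the limit of its $(K,w)$-slice with initial vertex deleted is $\Eq(f^{\lim})$, and its $(K,w)$-comparison is exactly the morphism that $f^{\top}$ and $f^{\lim}$ induce on the kernel pairs of the two horizontal edges of $\mathcal{Q}_{K,w}$. Thus ``$f$ is an $(n+1)$-cubic extension'' and ``$\Eq(f)$ is an $n$-cubic extension'' are both statements about the same family of squares $\mathcal{Q}_{K,w}$, and they correspond to one another by the standard fact about regular Mal'tsev categories that a commutative square of regular epimorphisms is a regular pushout if and only if the morphism it induces on the kernel pairs of one of its two pairs of parallel edges is a regular epimorphism.

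The point that requires care --- and where exactness of $\X$, not merely regularity, is used --- is that, to run this matching, one must know that all four edges of each $\mathcal{Q}_{K,w}$ are regular epimorphisms. For the top and left edges this is immediate (a component of $f$, respectively a comparison morphism of the extension $B$); but the bottom edge $f^{\lim}$, and in the harder implication also the right edge (a comparison morphism of $A$, which is not assumed to be an extension a priori), rely on two closure properties: that taking the limit of a punctured subcube sends a levelwise regular epimorphism between higher cubic extensions to a regular epimorphism, and that the quotient of a higher cubic extension by a higher-cubic-extension-valued effective equivalence relation is again a higher cubic extension. I would prove these by an auxiliary induction, expressing the limit of a punctured cube as an iterated pullback and repeatedly invoking the Carboni--Kelly--Pedicchio characterisation of exact Mal'tsev categories together with the way regular epimorphisms behave under such pullbacks; alternatively one may cite the corresponding stability results from~\cite{EGVdL, EGoeVdL}. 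This closure property is the real obstacle; granting it, the proposition follows from the bookkeeping sketched above.
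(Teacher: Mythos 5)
First, a point of comparison: the paper offers no proof of this proposition --- it is imported from \cite{EGoeVdL} and stated with \emph{no proof} --- so there is no in-text argument to measure yours against, and I can only judge the proposal on its own terms. The bookkeeping part is sound. Your flattened characterisation of $m$-cubic extensions by the comparison maps of \emph{all} sub-faces is correct, and it is the right one to use (the smaller family of conditions indexed only by slices through the initial vertex would not suffice). Since limits in $\Arrn(\X)$ are computed pointwise and kernel pairs commute with limits, the comparison maps of $\Eq(f)$ are indeed the morphisms induced on the horizontal kernel pairs of your squares $\mathcal{Q}_{K,w}$; and the two-dimensional fact you invoke --- a commutative square of regular epimorphisms in a regular Mal'tsev category is a regular pushout if and only if the induced map on the kernel pairs of a pair of parallel edges is a regular epimorphism --- is correct, follows from Lemma~\ref{double-extension-condition} together with permutability, and is exactly the case $n=1$ of the proposition. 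On this basis the ``only if'' direction is complete.

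The gap is in the ``if'' direction, and you have located it accurately yourself: everything reduces to showing that the codomain $A$ is an $n$-cubic extension. (Once that is known, your first closure property is automatic: the induced map on punctured limits is the second factor of a composite of regular epimorphisms, namely $B_{v_0}\to A_{v_0}$ followed by the comparison morphism of $A$.) But this remaining statement is not a routine stability fact that can be outsourced. Its naive form --- the codomain of a levelwise regular epimorphism out of an $n$-cubic extension is again an $n$-cubic extension --- is false already for $n=2$ in abelian groups: take for $B$ the square with all vertices $\Z$ and all edges identities, and for $f$ the morphism of squares which is the identity except at the terminal vertex, where it is $\Z\to\Z/2\Z$; then $A$ is not a double extension, since the comparison $\Z\to\{(a,b)\in\Z^2\mid a\equiv b \pmod 2\}$ is the diagonal. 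So the hypothesis that $\Eq(f)$ is an $n$-cubic extension must genuinely enter the proof of this closure property, which makes that property essentially equivalent to the implication you are trying to establish; pointing to ``the corresponding stability results from \cite{EGVdL,EGoeVdL}'' therefore comes close to citing the proposition in order to prove it, and the alternative ``auxiliary induction invoking Carboni--Kelly--Pedicchio'' is asserted but not carried out. All of the exact-Mal'tsev content of the proposition is concentrated exactly there (in \cite{EGoeVdL} it is handled by a simultaneous induction establishing the relative Mal'tsev axiom for the pair $(\Ext^{n}(\X),\E^{n})$), so as written the proof is incomplete at its essential point.
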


\begin{remark}
	One consequence of this result is that the kernel pair of an ${(n+1)}$-cubic extension, considered as an arrow between $n$-cubic extensions in an exact Mal'tsev category $\X$, may be taken pointwise in the base category $\X$. Likewise, the (pointwise) coequaliser of a reflexive graph of $n$-cubic extensions is an $(n+1)$-cubic extension. This allows us to treat exact forks of $n$-cubic extensions pointwise as diagrams in $\X$.
\end{remark}

\begin{remark}\label{Remark Symmetry}
	For a given $n$-cube of arrows in $\X$, the extension property is \emph{symmetric}, in the sense that it is independent of the way this $n$-cube is considered as a morphism of $(n-1)$-fold arrows---see~\cite{EGoeVdL} for more details.
\end{remark}

\begin{theorem}[Denormalised $3^n$-Lemma, I]\label{3^n iff extension}
Let $\X$ be an exact Mal'tsev category. The adjunction
\[
\xymatrix{\Fork^n(\X) \ar@<1ex>[r]^-{\arr^n} \ar@{}[r]|-{\bot} & \Arr^n(\X) \ar@<1ex>[l]^-{\EqFork^n}}
\]
restricts to an adjoint equivalence ${\EFork^n(\X)\simeq \Ext^n(\X)}$. In this sense, \emph{$n$-cubic extensions are equivalent to $3^n$-diagrams}. 
\end{theorem}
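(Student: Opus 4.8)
The plan is to set up an induction on $n$, using the triangle of adjunctions in Figure~\ref{Figure n-Triangle} together with Proposition~\ref{Proposition exact-fork} to transport the extension condition back and forth between an $(n+1)$-fork and its underlying $(n+1)$-fold arrow. For $n=0$ (or $n=1$, depending on the chosen base) the statement is exactly the classical fact recalled earlier: $\arr$ and $\EqFork$ restrict to an adjoint equivalence between exact forks $\EFork(\X)$ and the category of regular epimorphisms $\Reg(\X)=\Ext(\X)$, since an exact fork is precisely a kernel pair together with its coequaliser, and a regular epimorphism is precisely the coequaliser of its kernel pair; here one uses that $\X$ is regular so that $\arr^1\comp\EqFork^1\cong 1_{\Reg(\X)}$ and $\EqFork^1\comp\arr^1\cong 1_{\EFork(\X)}$ on the nose (up to canonical iso). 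First I would record carefully that the unit and counit of the adjunction $\arr^n\dashv\EqFork^n$ become isomorphisms exactly on these two subcategories, which is what ``restricts to an adjoint equivalence'' means.

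For the inductive step, assume the equivalence $\EFork^{n}(\X)\simeq\Ext^{n}(\X)$ and consider an $(n+1)$-fork $F$, which by definition is a fork in $\Fork^{n}(\X)$, say with underlying $n$-fork arrow $\mathbf f\from F_1\to F_0$ in $\Fork^{n}(\X)$ and with $F_2=\EqFork^{n}$-part given pointwise (using the Remark after Proposition~\ref{Proposition exact-fork}, kernel pairs and coequalisers of $n$-cubic extensions are computed pointwise in $\X$, so $(n+1)$-forks of $n$-cubic extensions may be treated as honest diagrams in $\X$). By the decomposition $\arr^{n+1}=\Arr(\arr^{n})\comp\arr$ and $\EqFork^{n+1}=\EqFork\comp\Arr(\EqFork^{n})$, the adjunction $\arr^{n+1}\dashv\EqFork^{n+1}$ is a composite of $\arr\dashv\EqFork$ at the outer level and the inductive equivalence applied levelwise. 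The key point is then to check that, under this composite, an $(n+1)$-fork $F$ is exact if and only if its underlying $(n+1)$-fold arrow is an $(n+1)$-cubic extension. One direction: if $F$ is an exact $(n+1)$-fork, then by the symmetry of the extension property (Remark~\ref{Remark Symmetry}) and the inductive hypothesis each of its constituent $n$-forks is exact, hence corresponds to an $n$-cubic extension; the outer fork of $F$ is then an exact fork in $\Fork^{n}(\X)$ whose objects are $n$-cubic extensions, so by Proposition~\ref{Proposition exact-fork} (read as: $f$ is an $(n+1)$-cubic extension iff $\Eq(f)$ is an $n$-cubic extension, given $F_0$ is one) its underlying arrow $\arr^{n+1}(F)=\Arr(\arr^n)(\mathbf f)$ is an $(n+1)$-cubic extension. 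Conversely, given an $(n+1)$-cubic extension $g\from A_1\to A_0$ in $\Arr^{n}(\X)$, all faces of the cube are $n$-cubic extensions, so $\EqFork^{n}$ applied pointwise to the constituent arrows yields exact $n$-forks, and Proposition~\ref{Proposition exact-fork} again gives that $\Eq(g)$ is an $n$-cubic extension; thus $\EqFork^{n+1}(g)$ is a fork in $\Fork^{n}(\X)$ all of whose constituent forks are exact, i.e.\ an exact $(n+1)$-fork.

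Finally I would check the unit and counit are isomorphisms on these subcategories: the counit $\arr^{n+1}\EqFork^{n+1}(g)\to g$ is built from the outer counit $\arr\EqFork\to\id$ (an iso on regular epis, here applied in $\Fork^{n}(\X)$, which is exact Mal'tsev when $\X$ is, since $\Fork^n$ preserves that structure) composed with the inductive counit, hence an iso; dually for the unit on exact $(n+1)$-forks. The expected main obstacle is the bookkeeping around the symmetry statement (Remark~\ref{Remark Symmetry}): Proposition~\ref{Proposition exact-fork} treats the last coordinate as distinguished, whereas exactness of an $(n+1)$-fork demands exactness of every one-dimensional slice in every direction, so one must invoke the coordinate-independence of the extension condition to know that "$F$ exact $\Leftrightarrow$ outer fork is an exact fork of $n$-cubic extensions" — i.e.\ that it does not matter in which order the $(n+1)$-fold arrow is peeled apart. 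Once that is granted, everything else is a formal consequence of composing two adjoint equivalences, so the proof is essentially an induction whose only real content is Proposition~\ref{Proposition exact-fork} plus Remark~\ref{Remark Symmetry}.
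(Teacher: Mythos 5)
Your proof is correct and follows essentially the same route as the paper's: an induction on $n$ that decomposes the adjunction $\arr^n\dashv\EqFork^n$ via the commuting square relating $\Fork(\arr^{n-1})$ and $\Arr(\arr^{n-1})$, applies the inductive equivalence levelwise, and settles the outer level with Proposition~\ref{Proposition exact-fork}. The extra care you take with Remark~\ref{Remark Symmetry} and with the pointwise computation of kernel pairs and coequalisers is left implicit in the paper's argument, but it is exactly the right point to be attentive to.
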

\begin{proof}
We give a proof by induction. The case $n=1$ is clear. Suppose now that $\arr^{n-1}\colon{\EFork^{n-1}(\X)\to \Ext^{n-1}(\X)}$ is an equivalence. Since the square
\[
\xymatrix{\Fork(\Fork^{n-1}(\X)) \ar[d]_-{\Fork(\arr^{n-1})} \ar[r]^-{\arr} & \Arr(\Fork^{n-1}(\X)) \ar[d]^-{\Arr(\arr^{n-1})}\\
\Fork(\Arr^{n-1}(\X)) \ar[r]_-\arr & \Arr(\Arr^{n-1}(\X))}
\]
commutes and the left hand side vertical arrow restricts to an equivalence
\[
\Fork(\EFork^{n-1}(\X))\simeq \Fork(\Ext^{n-1}(\X))
\]
which is such that exact forks in $\EFork^{n-1}(\X)$ correspond to exact forks in $\Ext^{n-1}(\X)$, we only need to prove that the functor
\[
\arr\colon \Fork(\Ext^{n-1}(\X))\to \Arr(\Ext^{n-1}(\X))
\]
restricts to an equivalence $\EFork(\Ext^{n-1}(\X))\simeq\Ext^{n}(\X)$. This is an immediate consequence of Proposition~\ref{Proposition exact-fork}.
\end{proof}

Recall from Section~\ref{Section Introduction} that an $n$-fold equivalence relation is \defn{parallelistic} when it is in the replete image $\PERel^n(\X)$ of the left adjoint
\[
\xymatrix{\ERel^n(\X) \ar@<1ex>[r]^-{U} \ar@{}[r]|-{\bot} & \ERel(\X)^n, \ar@<1ex>[l]^-{\square}}
\]
where the forgetful functor $U$ takes an $n$-fold equivalence relation on $R_0$, \dots, $R_{n-1}$ and sends it to the $n$-tuple of equivalence relations $(R_0,\dots, R_{n-1})$, and the functor $\square$ takes an $n$-tuple of equivalence relations $(R_i)_{i\in n}$ and sends it to the $n$-fold equivalence relation $\bigboxvoid_{i\in n}R_i$. 

\begin{definition}[Induced $n$-fold regular epi]
	Given an $n$-tuple $(R_0,\dots, R_{n-1})$ of equivalence relations on an object $X$, we write $\Coeq_{i\in n}(R_i)$ for the \defn{induced $n$-fold regular epimorphism} $\Coeq^n(\bigboxvoid_{i\in n}R_i)$. As explained above, it may be obtained by taking successive pushouts of the coequalisers of the $R_i$.
\end{definition}

\begin{definition}[Distributivity]
A finite collection $(R_i)_{i\in n}$ of equivalence relations on an object $X$ is said to be \defn{distributive} when the following congruence distributivity condition is satisfied: if $J_0$, $J_1$, \dots, $J_k\subseteq n$ with $k\geq 1$ such that
$J_i \cap J_j=\emptyset$ for all $i\neq j$, then
\[
\big(\bigmeet_{j\in J_0}R_j\big)\meet \bigjoin_{i=1}^k\big(\bigmeet_{j\in J_i}R_j\big)=
 \bigjoin_{i=1}^k\big( \bigmeet_{j\in J_0\cup J_i}R_j\big).
\]

A parallelistic $n$-fold equivalence relation $\bigboxvoid_{i\in n}R_i$ is \defn{distributive} when the collection of equivalence relations $(R_i)_{i\in n}$ is. We write $\DPERel^n(\X)$ for the full subcategory of $\PERel^n(\X)$ determined by the distributive parallelistic $n$-fold equivalence relations.
\end{definition}

\begin{remark}
	The distributivity condition is only non-trivial when $n\geq 3$; for smaller $n$, it always holds.
\end{remark}

\begin{remark}\label{closure of Dn by finitary intersection} 
\begin{enumerate}
	\item A collection of equivalence relations $(R_i)_{i\in
n}$ is distributive if and only if for all $l\leq n$ and all $0 \leq i_0< \cdots<i_{l-1}\leq n-1$, the collection $(R_{i_j})_{j\in l}$ is.
\item If a collection of equivalence relations $(R_i)_{i\in n}$ is distributive, then for all $l\leq n$, the collection $(S_i)_{i\in l}$ where $S_i=\bigmeet_{j\in J_i}R_j$ and $ J_0$, \dots, $J_{l-1}$ are non-empty subsets of $n$ with $J_i\cap J_j=\emptyset$, $i\neq j$, is distributive as well.
\end{enumerate}
\end{remark}

Our aim is now to prove the following version of the \emph{denormalised $3^n$-Lemma}:

\begin{theorem}[Denormalised $3^n$-Lemma, II]\label{3^n iff distributive parallelistic}
Let $\X$ be an exact Mal'tsev category. The adjunction
\[
\xymatrix{\Fork^n(\X) \ar@<1ex>[r]^-{\grph^n} \ar@{}[r]|-{\bot} & \RG^n(\X) \ar@<1ex>[l]^-{\CoeqFork^n}}
\]
restricts to an adjoint equivalence ${\EFork^n(\X)\simeq \DPERel^n(\X)}$. In this sense, \emph{$3^n$-dia\-grams are equivalent to distributive parallelistic $n$-fold equivalence relations}. 
\end{theorem}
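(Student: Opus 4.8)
The plan is to peel the statement down to one concrete assertion about induced $n$-fold regular epimorphisms, and then to prove that assertion by induction on $n$ with Proposition~\ref{Proposition exact-fork} as the engine.

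First I would note that on exact forks the functor $\grph^n$ coincides (up to natural isomorphism) with $\Eq^n\comp\arr^n$. Indeed, the triangle of Figure~\ref{Figure n-Triangle} commutes, so $\grph^n\comp\EqFork^n=\Eq^n$; on the other hand $\arr^n\comp\EqFork^n=1_{\Arr^n(\X)}$, and on $\EFork^n(\X)$ the unit of $\arr^n\dashv\EqFork^n$ is invertible, since by Theorem~\ref{3^n iff extension} the functor $\EqFork^n$ restricts to a quasi-inverse of the equivalence $\arr^n\colon\EFork^n(\X)\to\Ext^n(\X)$. Composing, $\grph^n|_{\EFork^n(\X)}\cong\Eq^n\comp\arr^n|_{\EFork^n(\X)}$. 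The second factor is the equivalence of Theorem~\ref{3^n iff extension}, while $\Eq^n$ restricts to an equivalence $\Reg^n(\X)\to\PERel^n(\X)$ (recalled in Section~\ref{Section Introduction}: in an exact Mal'tsev category a higher equivalence relation is effective precisely when it is parallelistic), with quasi-inverse $\Coeq^n$. Hence $\grph^n$ restricts to an equivalence between $\EFork^n(\X)$ and the full subcategory $\Eq^n(\Ext^n(\X))$ of $\PERel^n(\X)$; transporting this subcategory along $\Coeq^n$, it consists of exactly those parallelistic relations $\bigboxvoid_{i\in n}R_i$ for which $\Coeq^n(\bigboxvoid_{i\in n}R_i)=\Coeq_{i\in n}(R_i)$ is an $n$-cubic extension. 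So the theorem reduces to the \emph{distributivity criterion}:
\[
\Coeq_{i\in n}(R_i)\ \text{is an $n$-cubic extension}\quad\Longleftrightarrow\quad (R_i)_{i\in n}\ \text{is distributive}.
\]
Granting this, the displayed full subcategory is $\DPERel^n(\X)$; moreover $\CoeqFork^n$ then carries $\DPERel^n(\X)$ into $\EFork^n(\X)$ — for a distributive family $\CoeqFork^n(\bigboxvoid_{i\in n}R_i)$ has underlying $n$-fold arrow the $n$-cubic extension $\Coeq_{i\in n}(R_i)$, and the counit $\CoeqFork^n\comp\grph^n\Rightarrow1$ is invertible on exact forks, so $\CoeqFork^n(\bigboxvoid_{i\in n}R_i)$ is again an exact $n$-fork — whence the adjunction $\CoeqFork^n\dashv\grph^n$ genuinely restricts, and a restricted adjunction one of whose two functors is an equivalence is an adjoint equivalence.

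It remains to prove the distributivity criterion, by induction on $n$. For $n\leq2$ it holds vacuously: distributivity is automatic, and $\Coeq_{i\in n}(R_i)$ is an iterated pushout of regular epimorphisms, hence an $n$-cubic extension by Theorem~5.7 of~\cite{Carboni-Kelly-Pedicchio}. Let $n\geq3$. Since the ``initial face'' of an $n$-cubic extension is an $(n-1)$-cubic extension, the inductive hypothesis together with Remark~\ref{closure of Dn by finitary intersection} shows that neither side can hold unless every proper subfamily of $(R_i)_{i\in n}$ is already distributive; so we may assume this, and in particular $\Coeq_{i\in n-1}(R_i)$ is an $(n-1)$-cubic extension. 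Now regard $\Coeq_{i\in n}(R_i)$, via the symmetry of the extension property (Remark~\ref{Remark Symmetry}), as a morphism $\phi$ of $(n-1)$-fold arrows obtained by splitting off the last coordinate: its domain is $\Coeq_{i\in n-1}(R_i)$ on $X$ and its codomain is $\Coeq_{i\in n-1}(\bar R_i)$ on $X/R_{n-1}$, where $\bar R_i=(R_i\join R_{n-1})/R_{n-1}$. By Proposition~\ref{Proposition exact-fork}, $\Coeq_{i\in n}(R_i)$ is then an $n$-cubic extension if and only if $\Eq(\phi)$ is an $(n-1)$-cubic extension. Computing $\Eq(\phi)$ pointwise in $\X$ (legitimate by the Remark following Proposition~\ref{Proposition exact-fork}), its vertex indexed by $S\subseteq\{0,\dots,n-2\}$ is the direct image of the object $R_{n-1}$ in $(X/\bigjoin_{j\in S}R_j)^2$, that is, the quotient of $R_{n-1}$ by the congruence ``$\bigjoin_{j\in S}R_j$ acting simultaneously on both legs of $R_{n-1}$''. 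Applying the inductive hypothesis once more, now to appropriate congruences on the object $R_{n-1}$, turns ``$\Eq(\phi)$ is an $(n-1)$-cubic extension'' into a family of identities in the congruence lattice of $R_{n-1}$; the proof is completed by translating these, together with the identities saying $(\bar R_i)_{i\in n-1}$ is distributive (which $\Eq(\phi)$ also entails), back into the congruence lattice of $X$ and checking — using the modularity of congruence lattices of Mal'tsev categories and Remark~\ref{closure of Dn by finitary intersection} — that they amount precisely to the remaining distributivity equations for $(R_i)_{i\in n}$, namely those involving $R_{n-1}$.

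The main obstacle is this last reconciliation step. The congruence ``$R$ acting on both legs of $R_{n-1}$'' is monotone and preserves meets, but \emph{not} joins — ``$\bigjoin_{j\in S}R_j$ on both legs'' is in general strictly larger than the join of the congruences ``$R_j$ on both legs'' — so $\Eq(\phi)$ cannot simply be presented as an induced higher regular epimorphism on $R_{n-1}$ and the inductive hypothesis applied formally; a genuine computation in the congruence lattice of $X$ is needed, and it must proceed in modular rather than distributive lattices, since by Remark~\ref{Remark Not Generated by Binary} and Example~\ref{Complexes} the distributivity condition is strictly stronger than distributivity of all subtriples. A secondary, routine point is to verify that direct images of congruences and of their joins along regular epimorphisms behave as expected, which justifies the descriptions above of the domain of $\phi$ and of $\Eq(\phi)$; this is standard in the exact Mal'tsev setting.
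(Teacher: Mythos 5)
Your reduction of the theorem to the single assertion ``$\Coeq_{i\in n}(R_i)$ is an $n$-cubic extension if and only if $(R_i)_{i\in n}$ is distributive'' is correct and is exactly how the paper proceeds: its proof of Theorem~\ref{3^n iff distributive parallelistic} is one line, citing Theorem~\ref{3^n iff extension} together with Proposition~\ref{Proposition Distr vs Ext}, which is precisely your distributivity criterion. Your inductive skeleton (split off the last coordinate, use Proposition~\ref{Proposition exact-fork} to trade the extension property of $F$ for that of $\Eq(F)$) is also the paper's. The problem is that everything you defer to ``a genuine computation in the congruence lattice'' is where the entire mathematical content of the theorem lives, and you do not carry any of it out. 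Concretely, three things are missing. First, the base case: your induction starts at $n=3$ with a vacuous hypothesis at $n=2$, so the case $n=3$ must be done by hand; in the paper this is Proposition~\ref{Proposition Distr vs Ext 3}, whose proof needs Lemma~\ref{double-extension-condition} plus a non-trivial chain of modularity manipulations to pass between the join-over-meet and meet-over-join forms of distributivity. Second, the forward direction for general $n$: you correctly observe that $\bigl(\bigjoin_{j\in S}R_j\bigr)\square R_{n-1}$ need not equal $\bigjoin_{j\in S}(R_j\square R_{n-1})$ in general, but you stop there, whereas the point (Proposition~\ref{distrib}) is that under the distributivity hypothesis these \emph{do} coincide, so that $\Eq(F)=\Coeq_{i\in n-1}(R_i\square R_{n-1})$ after all and the inductive hypothesis applies formally once one also shows (Propositions~\ref{stability by intersection} and~\ref{Proposition Square Preserves Distributivity}) that $(R_i\square R_{n-1})_{i\in n-1}$ inherits distributivity. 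Your stated ``main obstacle'' is thus not an obstacle to be worked around but a lemma to be proved, and you prove neither it nor its consequences.

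Third, the converse direction is essentially absent. Knowing that every proper subfamily of $(R_i)_{i\in n}$ is distributive does not yield the distributivity identities in which $R_{n-1}$ genuinely participates together with the others --- as you yourself note via Remark~\ref{Remark Not Generated by Binary} and Example~\ref{Complexes}, these are strictly stronger than distributivity of all subfamilies. The paper extracts them by first establishing Lemma~\ref{some obvious results about extensions} (that meets, joins and sub-selections of the $R_i$ again yield cubic extensions of lower dimension, by reading off faces, pullback comparisons and diagonals of the given extension), then applying the inductive hypothesis to the derived family $S_i=\bigmeet_{j\in J_i}R_j$ and finishing with an explicit associativity/modularity computation. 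Your proposal replaces all of this with ``translating these \dots back into the congruence lattice of $X$ and checking,'' which is precisely the step one cannot wave at. In short: right architecture, but the proof of the distributivity criterion --- which is the theorem --- is not there.
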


We first obtain an intermediate result (Proposition~\ref{Proposition Distr vs Ext}), which relates extensions to distributive parallelistic equivalence relations. In order to better understand why a congruence distributivity condition plays a role here, let us start with the case $n=3$ (Proposition~\ref{Proposition Distr vs Ext 3}). Recall the following result from~\cite{Carboni-Kelly-Pedicchio}:

\begin{lemma}\label{double-extension-condition}
Consider in a regular category $\X$ a commutative square
\[
\xymatrix@1@!0@=2em{A\ar@{->>}[rr]^{s}\ar@{->>}[dd]_{r}\ar@{.>>}[rrdd]^{t}&&C\ar@{->>}[dd]^{v}\\
&&\\
B\ar@{->>}[rr]_{u}&&D }
\]
of regular epimorphism where $t=u\comp r=v\comp s$ and $R=\Eq(r)$, $S=\Eq(s)$ and $T=\Eq(t)$.
Then the following conditions are equivalent:
\begin{tfae}
\item $\langle r,s\rangle\colon A\to B\times_D C$ is a regular
epimorphism;
\item $R\circ S=T=S\circ R$.
\end{tfae}
The category $\X$ is Mal'tsev category if and only if always $R\circ S=R\join S=S\circ R$. The category $\X$ is exact Mal'tsev if and only if this latter equivalence relation is always a congruence. Then the equivalent conditions {\rm (i)} and {\rm (ii)} hold if and only if the given square is a pushout.\noproof
\end{lemma}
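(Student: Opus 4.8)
The plan is to prove the equivalence of (i) and (ii) directly, and then to recover the characterisations of the Mal'tsev and exact Mal'tsev properties --- together with the last sentence about pushouts --- from this equivalence by combining it with well-known facts from~\cite{Carboni-Kelly-Pedicchio}.

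For (i)$\iff$(ii) I would argue through the calculus of relations in the regular category $\X$. Write $P$ for the pullback $B\times_D C$. First I would observe that the comparison $\phi\colon \Eq(t)\to P$ sending a pair $(a,a')$ with $t(a)=t(a')$ to $\bigl(r(a),s(a')\bigr)$ is always a regular epimorphism: it factors through $B\times_D A$ as the pullback of $r$ along $\operatorname{pr}_B\colon B\times_D A\to B$ followed by the pullback of $s$ along $\operatorname{pr}_C\colon P\to C$, and regular epimorphisms are stable under pullback. Next I would form the pullback $Q$ of the comparison $\langle r,s\rangle\colon A\to P$ along $\phi$. Inspecting this pullback, its elements are pairs $\bigl((a,a'),a_0\bigr)$ with $r(a)=r(a_0)$ and $s(a')=s(a_0)$, that is, with $(a,a_0)\in R$ and $(a_0,a')\in S$; so $Q$ is canonically the object $R\times_A S$ computing the relational composite $R\comp S$, and under this identification the projection $Q\to\Eq(t)$ is, after the monomorphism $\Eq(t)\hookrightarrow A\times A$, exactly the canonical map whose image in $A\times A$ is $R\comp S$. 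On the other hand this projection is the pullback of $\langle r,s\rangle$ along the regular epimorphism $\phi$; since images are pullback-stable and inverse image along a regular epimorphism is order-reflecting on subobjects, a morphism is a regular epimorphism whenever its pullback along some regular epimorphism is. Hence $\langle r,s\rangle$ is a regular epimorphism if and only if $Q\to\Eq(t)$ is, if and only if its image $R\comp S$ equals $\Eq(t)=T$. Precomposing $\phi$ with the ``twist'' automorphism of $\Eq(t)$ runs the same argument with $R$ and $S$ interchanged, giving $\langle r,s\rangle$ a regular epimorphism if and only if $S\comp R=T$. Together these two equivalences are exactly (i)$\iff$(ii).

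For the Mal'tsev statement I would recall from~\cite{Carboni-Kelly-Pedicchio} that a regular category is Mal'tsev precisely when the composite of any two (effective) equivalence relations on a common object is again an equivalence relation; in that case $R\comp S$ is reflexive, symmetric and transitive, it coincides with $S\comp R$, and it is the smallest equivalence relation above both $R$ and $S$, namely $R\join S$. The refinement to exactness is~\cite[Proposition~5.4]{Carboni-Kelly-Pedicchio}: a regular Mal'tsev category is Barr exact exactly when every such join $R\join S$ is effective. In an exact Mal'tsev category, condition (ii) therefore reads $\Eq(t)=R\join S$. If the given square is a pushout, then $D$ is the pushout of the regular epimorphisms $r$ and $s$; since a morphism out of $A$ factors through both $r$ and $s$ if and only if it coequalises both $R$ and $S$, if and only if it coequalises $R\join S$, the object $D$ is (up to isomorphism) the quotient of $A$ by the effective equivalence relation $R\join S$ and $t$ is the quotient map, so $\Eq(t)=R\join S$ and (ii) holds. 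Conversely, if (i)/(ii) holds then $\Eq(t)=R\join S$, so $t$ is the coequaliser of the two projections $R\join S\to A$; given any $\bar q\colon B\to E$ and $\tilde q\colon C\to E$ with $\bar q\comp r=\tilde q\comp s$, the common composite $\bar q\comp r$ coequalises $R$ and $S$, hence $R\join S$, hence factors uniquely through $t$ as $h\comp t$; then $h\comp u=\bar q$ and $h\comp v=\tilde q$ because $r$ and $s$ are epimorphisms, and $h$ is the only such morphism because $t$ is an epimorphism. So the square is a pushout.

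The step I expect to be the main obstacle is the element-free identification of the pullback $Q=\Eq(t)\times_P A$ with the object $R\times_A S$ computing $R\comp S$, together with the matching of the projection $Q\to\Eq(t)$ with the canonical relation-composition map: this requires keeping close track of the several projections involved and of which arrow is a pullback of which, and it is exactly here that one must invoke that regular epimorphisms are reflected by pullback along a regular epimorphism.
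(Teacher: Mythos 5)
The paper itself gives no proof of this lemma --- it is recalled from \cite{Carboni-Kelly-Pedicchio} and carries no argument --- so there is nothing internal to compare against; your proof is correct and is essentially the standard argument from that reference. The central step, pulling back $\langle r,s\rangle$ along the cover $\phi\colon\Eq(t)\to B\times_D C$ and identifying the resulting object with $R\times_A S$, so that the pulled-back map is a regular epimorphism exactly when the relational composite equals $T$, is sound: the descent fact you invoke (a morphism whose pullback along a regular epimorphism is a regular epimorphism is itself one) holds in any regular category, and the factorisation of $R\times_A S\to A\times A$ through $\Eq(t)$ is guaranteed by $R,S\leq T$ and transitivity of $T$. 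The Mal'tsev and exactness characterisations are correctly delegated to \cite{Carboni-Kelly-Pedicchio}, and your universal-property argument identifying the pushout of $r$ and $s$ with the quotient by the effective relation $R\join S$ completes the last assertion as intended.
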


In the same article and in~\cite{CLP}, it is explained that the lattice of equivalence relations on any object in an exact Mal'tsev category is \defn{modular}: any $R$, $S$ and $T$ on an object $X$ satisfy $(R\join S)\meet T=R\join (S\meet T)$. Equivalently, if $R\leq T$, then $(R\join S)\meet (R\join T)=R\join (S\meet T)$.

\begin{proposition}\label{Proposition Distr vs Ext 3}
	In an exact Mal'tsev category $\X$, consider an object $X$ and three equivalence relations $R_0$, $R_1$ and $R_2$ on $X$. Write $F=\Coeq_{i\in 3}(R_i)$ for the induced $3$-fold regular epimorphism. Then the following conditions are equivalent:
	\begin{tfae}
		\item $F$ is a $3$-cubic extension;
		\item $R_2\join(R_0\meet R_1)=(R_2\join R_0)\meet (R_2\join R_1)$;
		\item[(ii')] $R_1\join(R_0\meet R_2)=(R_1\join R_0)\meet (R_1\join R_2)$;
		\item[(ii'')] $R_0\join(R_1\meet R_2)=(R_0\join R_1)\meet (R_0\join R_2)$;
		\item $R_2\meet(R_0\join R_1)=(R_2\meet R_0)\join (R_2\meet R_1)$;
		\item[(iii')] $R_1\meet(R_0\join R_2)=(R_1\meet R_0)\join (R_1\meet R_2)$;
		\item[(iii'')] $R_0\meet(R_1\join R_2)=(R_0\meet R_1)\join (R_0\meet R_2)$;
		\item the collection of equivalence relations $(R_i)_{i\in 3}$ is distributive;
		\item the parallelistic $3$-fold equivalence relation $\bigboxvoid_{i\in 3}R_i$ is distributive;
		\item $R_2\square(R_0\join R_1)=(R_2\square R_0)\join (R_2\square R_1)$, as equivalence relations on $R_2$;
		\item[(vi')] $R_1\square(R_0\join R_2)=(R_1\square R_0)\join (R_1\square R_2)$, as equivalence relations on $R_1$;
		\item[(vi'')] $R_0\square(R_1\join R_2)=(R_0\square R_1)\join (R_0\square R_2)$, as equivalence relations on $R_0$.
	\end{tfae} 
\end{proposition}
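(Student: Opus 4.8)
The plan is to prove the chain of equivalences in three stages. First, I would establish that conditions (ii), (ii$'$), (ii$''$) are mutually equivalent, and likewise (iii), (iii$'$), (iii$''$), and that each such ``join-distributivity'' condition is equivalent to the corresponding ``meet-distributivity'' condition — that is, (ii)$\Leftrightarrow$(iii), etc. This is pure lattice theory inside the modular lattice of equivalence relations on $X$. In a modular lattice, distributivity is a self-dual and symmetric property of a triple: the identity $R_2\join(R_0\meet R_1)=(R_2\join R_0)\meet(R_2\join R_1)$ does not single out $R_2$, and it is equivalent to the dual meet-form; this is the classical fact that a modular lattice generated by three elements is distributive iff any one of the six median-type identities holds. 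So (ii)$\Leftrightarrow$(ii$'$)$\Leftrightarrow$(ii$''$)$\Leftrightarrow$(iii)$\Leftrightarrow$(iii$'$)$\Leftrightarrow$(iii$''$)$\Leftrightarrow$(iv)$\Leftrightarrow$(v), where the last two are just unpacking the definition of \defn{distributive} for $n=3$ (for three relations, the only non-trivial instances of the defining identity are exactly (iii), (iii$'$), (iii$''$), taking $J_0$ a singleton and $J_1$, $J_2$ the other two singletons, with $k=2$; the $k=1$ case is trivial, and the case where some $J_i$ has two elements forces the remaining $J_j$ to be empty or produces a trivially-true instance).

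Second, I would connect these lattice conditions to condition (i), that $F=\Coeq_{i\in 3}(R_i)$ is a $3$-cubic extension. Here I would use the symmetry of the extension property (Remark~\ref{Remark Symmetry}) to view $F$ as a morphism of double arrows, say in the direction corresponding to $R_2$: the ``outer'' double arrow is $\Coeq_{i\in 2}(R_0,R_1)$ on $X$ and the ``inner'' one is the analogous double regular epimorphism on the quotient $X/R_2$; since $\X$ is exact Mal'tsev every double regular epimorphism is automatically a double extension, so by the inductive characterisation $F$ is a $3$-cubic extension iff the comparison map from the top double arrow to the pullback of the bottom one along the vertical map is itself a double extension. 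Unwinding this with Lemma~\ref{double-extension-condition} applied in each of the relevant squares — the kernel pairs involved being precisely meets and composites (=joins, by Mal'tsevness) of the $R_i$ — produces exactly the requirement that $R_2$ and the join $R_0\join R_1$ ``commute correctly'' with the pieces $R_0$, $R_1$, which is the distributivity identity $R_2\meet(R_0\join R_1)=(R_2\meet R_0)\join(R_2\meet R_1)$, i.e. condition (iii). The point is that the failure of $\langle a,f_1\rangle$-type maps to be regular epis, by Lemma~\ref{double-extension-condition}, is governed by an identity $T=R\circ S$, and computing $R$, $S$, $T$ as kernel pairs of the induced maps expresses them via meets of the $R_i$; the resulting equality, after using modularity, collapses to distributivity.

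Third, I would handle condition (vi) and its primed variants, relating the relation-level identity to the ``$\square$''-level identity on $R_2$ (resp. $R_1$, $R_0$). The cleanest route is to observe that $R_2\square(-)$, as a functor on equivalence relations on $X$ lying over $R_2$ — equivalently, as described by the pullback in Figure~\ref{Figure Square} — preserves the relevant meets and, crucially here, that $(R_2\square R_0)\join(R_2\square R_1)$ computed in $\ERel(R_2)$ corresponds under the isomorphism $\ERel(R_2)\supseteq{}$(relations containing the diagonal) to $R_2\square(R_0\join R_1)$ precisely when $R_2\meet(R_0\join R_1)=(R_2\meet R_0)\join(R_2\meet R_1)$; this is essentially the statement that the ``square'' construction turns the distributivity identity on $X$ into the corresponding identity one dimension up. Concretely, elements of $R_2\square(R_0\join R_1)$ are quadruples, and one checks that such a quadruple lies in the join $(R_2\square R_0)\join(R_2\square R_1)$ exactly under the hypothesis (iii); so (iii)$\Leftrightarrow$(vi), and by the symmetry already established, also (iii$'$)$\Leftrightarrow$(vi$'$) and (iii$''$)$\Leftrightarrow$(vi$''$). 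Alternatively, (i)$\Leftrightarrow$(vi) can be obtained directly: $\bigboxvoid_{i\in 3}R_i$ restricted over $R_2$ is a double equivalence relation on $R_2\square R_0$ and $R_2\square R_1$, it is parallelistic, hence effective, hence corresponds under $\Coeq^2$ to a double regular epimorphism which is automatically a double extension — but $F$ being a $3$-cubic extension is, via symmetry, the condition that this double equivalence relation is \emph{the} one $\bigboxvoid$ of $R_2\square R_0$ and $R_2\square R_1$ realising their join correctly, which is (vi).

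\textbf{Main obstacle.} I expect the delicate step to be the second one: carefully identifying, in the ladder of induced pushout/pullback squares defining $\Coeq_{i\in 3}(R_i)$, exactly which kernel pairs and which comparison maps must be regular epimorphisms, and verifying that the resulting list of conditions from Lemma~\ref{double-extension-condition} — each of the form ``some composite equals some other composite'' — reduces, using only modularity and Mal'tsevness ($R\circ S=R\join S$), precisely to the single distributivity identity rather than to something weaker or stronger. Getting the bookkeeping of indices right (which relation plays the role of $R$, $S$, $T$ in which square, and how $R_0\join R_1$ enters as the kernel pair of the relevant ``corner'' map) is where the real content sits; everything else is either lattice theory or a direct diagram chase with the explicit ``quadruple'' description of $\square$.
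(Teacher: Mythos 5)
Your plan follows essentially the same route as the paper's proof: the substance is Lemma~\ref{double-extension-condition} applied to the comparison square of the induced cube (linking (i) to the lattice identities), modularity of the congruence lattice to pass between the join- and meet-forms, and, for (vi), kernel pairs in the $f_2$-direction combined with Proposition~\ref{Proposition exact-fork}. Two remarks on the differences. First, where you outsource the mutual equivalence of (ii)--(iii$''$) to the classical ``distributive triple'' theorem for modular lattices, the paper instead obtains the permutation-invariance of the join-forms (ii), (ii$'$), (ii$''$) from the symmetry of the extension property (Remark~\ref{Remark Symmetry}) applied to condition (i), and then derives each meet-form from a join-form (and back) by a short modular-law computation; your shortcut is legitimate, since the cited fact is exactly what those computations establish. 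Second, one concrete imprecision in your stage two: the identity that falls out of the categorical analysis is the \emph{join}-form (ii), not the meet-form (iii). Slicing the cube in the $R_2$-direction, the comparison square has horizontal kernel pair $R_0\meet R_1$ (that of $\langle f_0,f_1\rangle$), vertical kernel pair $R_2$, and diagonal kernel pair $(R_0\join R_2)\meet(R_1\join R_2)$, so Lemma~\ref{double-extension-condition} yields $R_2\join(R_0\meet R_1)=(R_0\join R_2)\meet(R_1\join R_2)$. This is harmless given your first stage, but it is precisely the bookkeeping you single out as the main obstacle, so it is worth getting right. Finally, for (vi) your element-wise route via quadruples would need more care than you suggest (joins of congruences are relational composites only by Mal'tsevness, and the argument has to be run internally); your alternative route---the double equivalence relation obtained as the kernel pair of $F$ in the $f_2$-direction is a $2$-cubic extension if and only if $(R_2\square R_0)\join(R_2\square R_1)$ is the kernel pair of its diagonal, which is $R_2\square(R_0\join R_1)$---is the one the paper takes and is the cleaner of the two.
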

\begin{proof}
We consider $F$ as the cube in Figure~\ref{Figure 3-Cube}, 
\begin{figure}
$\xymatrix@!0@=3em{&& {\cdot} \ar@{~>}[rd] \ar[rrrr] \ar@{.>}[dddd]|-{\hole} &&&& {\cdot} \ar[dddd] \\
&&& {P} \ar@{.>}[lddd] \ar@{.>}[rrru] \pullback \\
{X} \ar@{~>}[rd]|{\langle f_0,f_1\rangle} \ar[rrrr]^(.25){f_1} \ar[dddd]_{f_0} \ar@{~>}[rruu]^-{f_2} &&&& {\cdot} \ar[dddd] \ar[rruu]\\
& {\cdot} \ar@{~>}[rruu]|-{\hole} \ar@{->}[lddd] \ar@{->}[rrru] \pullback \\
&& {\cdot} \ar@{.>}[rrrr] &&&& {Q}\\\\
{\cdot} \ar[rrrr] \ar@{.>}[rruu] &&&& {\cdot} \ar[rruu]}$
\caption{The cube induced by $R_0$, $R_1$ and $R_2$.}\label{Figure 3-Cube}
\end{figure}
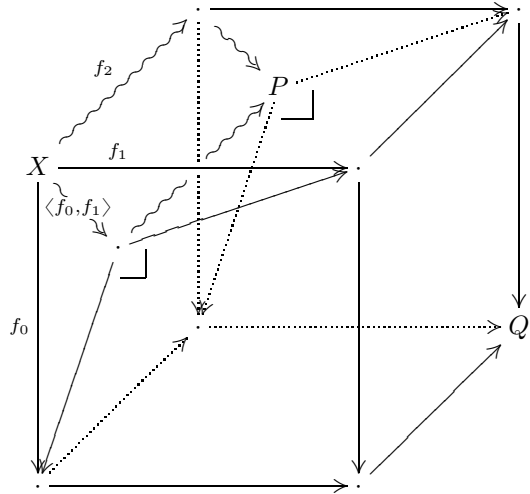
where $f_i=\Coeq(R_i)$. For this cube in $\X$ to represent an extension, we need that the induced comparison square to the pullbacks is a (regular) pushout. Note that the regular epimorphism $\langle f_0,f_1\rangle$ is the coequaliser of $R_0\meet R_1$. Now the pushout of $\Coeq(R_2\join R_0)$ and $\Coeq(R_2\join R_1)$ is $Q$, since $f_2$ is an epimorphism. Hence the comparison to the induced pullback, which is the coequaliser of $(R_2\join R_0)\meet (R_2\join R_1)$, is the composite ${X\to P}$. By Lemma~\ref{double-extension-condition} now, $F$ is a $3$-cubic extension if and only if $R_2\join(R_0\meet R_1)=(R_2\join R_0)\meet (R_2\join R_1)$, so that (i) and (ii) are equivalent. 

Since the extension property of $F$ is independent of the order of~$R_0$, $R_1$,~$R_2$, the above proof also shows that (i) is equivalent to (ii') and (i) is equivalent to (ii'').

Condition (ii') implies (iii) because
\begin{align*}
	(R_2\meet R_0)\join (R_2\meet R_1)&=((R_2\meet R_0)\join R_2)\meet ((R_2\meet R_0)\join R_1)\\
	&=R_2\meet ((R_2\join R_1)\meet (R_0\join R_1))\\
	&=R_2\meet (R_0\join R_1).
\end{align*}
Here we use modularity in the first equality, and distributivity in the second. Similarly, we find (iii') and (iii'') out of (ii) and (ii''). Now (iii), (iii') and (iii'') together are equivalent to (iv).

Condition (iii) implies (ii'') since
\begin{align*}
	(R_0\join R_1)\meet (R_0\join R_2)&=(R_0\join R_2)\meet (R_0\join R_1)\\
	&=R_0\join (R_2\meet (R_0\join R_1))\\
	&=R_0\join ((R_2\meet R_0)\join (R_2\meet R_1))\\
	&=R_0\join (R_2\meet R_1).
\end{align*} 
Likewise, also (ii) and (ii') are implied by (iv).

Conditions (iv) and (v) are equivalent by definition. For the proof that (i) and~(vi) are equivalent, take kernel pairs of the arrows in the $f_2$ direction in order to obtain the diagram in Figure~\ref{Figure Kernel Pair}. 
\begin{figure}
$
{\xymatrix@1@!0@=3em{\bigboxvoid_{i\in 3}R_i\ar@<1ex>[rr]\ar@<-1ex>[rr]\ar@<1ex>[dd]\ar@<-1ex>[dd]_{}&&R_2\square R_0\ar@<1ex>[dd]^{}\ar@<-1ex>[dd]_{}\ar[ll]\ar[rr]&&\cdot\ar@<1ex>[dd]\ar@<-1ex>[dd]\\
&&&&\\
R_2\square R_1\ar[uu]\ar@<1ex>[rr]\ar@<-1ex>[rr]\ar[dd]&&R_2\ar[uu]\ar[ll]\ar[dd]\ar[rr] \ar@{}[rrdd]|{(\ddagger)} &&\cdot\ar[dd]^{}\ar[uu]\\
&&&&\\
\cdot\ar@<1ex>[rr]\ar@<-1ex>[rr]_{}&&\cdot\ar[ll]\ar[rr]&&\cdot }}$
\caption{The kernel pair of the cube in Figure~\ref{Figure 3-Cube}.}\label{Figure Kernel Pair}
\end{figure}
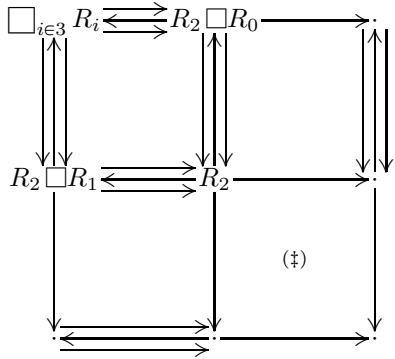
By Proposition~\ref{Proposition exact-fork}, the cube $F$ is a $3$-cubic extension if and only if the square $(\ddagger)$ is a $2$-cubic extension. By Lemma~\ref{double-extension-condition}, this happens if and only if $(R_2\square R_0)\join (R_2\square R_1)$ is the kernel pair of the diagonal in~$(\ddagger)$. However, since the front square of the cube in Figure~\ref{Figure 3-Cube} is known to be an double extension, the kernel pair in question is $R_2\square(R_0\join R_1)$. Hence (vi) holds if and only if $F$ is an extension. Similar proofs show that (vi') and (vi'') are equivalent to~(i).
\end{proof}

The case $n=3$ implies the general case (Proposition~\ref{Proposition Distr vs Ext}). To see this, we first need to prove some intermediate results.

\begin{proposition} \label{stability by intersection} 
Consider $n\geq 3$. In an exact Mal'tsev category, for any $n$-tuple $(R_i)_{i\in n}$ of equivalence relations on an object $X$, the operation $R_{n-1}\square(-)$ preserves intersections: for all $\emptyset\neq I\subseteq n-1$, we have 
\[
R_{n-1}\square (\bigmeet_{j\in I} R_j)=\bigmeet_{j\in I} (R_{n-1}\square R_j)
\]
as equivalence relations over $R_{n-1}$.
\end{proposition}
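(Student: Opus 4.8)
The plan is to recognise $R_{n-1}\square(-)$ as a right adjoint and conclude that it preserves limits. Concretely, for a fixed equivalence relation $R_{n-1}$ on $X$, consider the slice-type situation of equivalence relations on the object $R_{n-1}$ (viewed in $\X$, or equivalently in the exact Mal'tsev category $\Reg(\X)$ or $\ERel(\X)$ restricted appropriately): there is a forgetful functor from double equivalence relations over $R_{n-1}$ and a variable equivalence relation $S$ on $X$, sending such a datum to $S$, and its right adjoint is exactly $S\mapsto R_{n-1}\square S$ (this is the $n=2$ case of the adjunction $\square\dashv U$ recalled in Section~\ref{Section Introduction}, read ``one leg at a time''). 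A right adjoint preserves all limits that exist; intersections of equivalence relations are (wide) pullbacks, hence limits, so $R_{n-1}\square(\bigmeet_{j\in I}R_j)=\bigmeet_{j\in I}(R_{n-1}\square R_j)$, provided the meet on the right is computed in the lattice of equivalence relations over $R_{n-1}$ and then seen to agree with the componentwise meet.

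The step requiring care is precisely this identification of the two meets: the right-hand side $\bigmeet_{j\in I}(R_{n-1}\square R_j)$ must be computed as an intersection of equivalence relations \emph{on the object $R_{n-1}$}, and one has to check that this intersection, transported back, is the double equivalence relation $R_{n-1}\square(\bigmeet_{j\in I}R_j)$ rather than something larger. I would do this by the explicit ``matrix'' description of $\bigboxvoid$ recalled from \cite{RVdL2} (in the $2^2$ case this is Figure~\ref{Figure Square}): the elements of $R_{n-1}\square S$ are quadruples $(x,y,t,z)\in X^4$ with $x\mathrel{S}y$, $t\mathrel{S}z$, $x\mathrel{R_{n-1}}t$, $y\mathrel{R_{n-1}}z$. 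Intersecting over $j\in I$ forces $x\mathrel{R_j}y$ and $t\mathrel{R_j}z$ for every $j\in I$, which is exactly $x\mathrel{(\bigmeet_{j\in I}R_j)}y$ and $t\mathrel{(\bigmeet_{j\in I}R_j)}z$; the two $R_{n-1}$-conditions are unchanged. So the underlying relation of the componentwise intersection coincides, as a subobject of $X^4$, with that of $R_{n-1}\square(\bigmeet_{j\in I}R_j)$. In an exact Mal'tsev category this elementwise argument is rigorous once phrased via the pullback defining $\bigboxvoid$ and the fact that finite limits commute with finite limits; alternatively one invokes the metatheorem allowing element-chasing in regular categories.

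Finally, I would note that the intersection $\bigmeet_{j\in I}(R_{n-1}\square R_j)$ taken in the poset of equivalence relations on $R_{n-1}$ does compute the same subobject, because each $R_{n-1}\square R_j$ is an equivalence relation on $R_{n-1}$ (it is a double equivalence relation, hence in particular an equivalence relation in the ``horizontal'' direction over $R_{n-1}$), and intersections of equivalence relations are computed as intersections of the underlying relations. This closes the argument. The main obstacle, then, is not the adjointness formality but keeping straight which ambient category the meets live in and checking that ``meet of $\square$'s'' equals ``$\square$ of the meet'' at the level of underlying subobjects; the element description makes this transparent, and everything else is a formal consequence of $R_{n-1}\square(-)$ being a right adjoint.
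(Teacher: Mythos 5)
Your argument is correct, and its operative core is the same as the paper's: the published proof of Proposition~\ref{stability by intersection} consists of the single sentence ``It suffices to check this in the category $\Set$ of sets and functions'', i.e.\ exactly the elementwise verification via the quadruple description of $R_{n-1}\square S$ (a statement about finite limits, hence transferable from $\Set$ by the usual embedding/metatheorem for element-chasing). The adjointness framing you put in front of this is a legitimate alternative packaging, but, as you yourself observe, it does not by itself close the argument: the delicate point is precisely the identification of the limit in the category of double equivalence relations with fixed leg $R_{n-1}$ with the intersection of the underlying relations on the object $R_{n-1}$, and the only tool you use to settle that is the explicit description of $R_{n-1}\square S$ as the subobject of $X^4$ of quadruples $(x,y,t,z)$ with $x\mathrel{S}y$, $t\mathrel{S}z$, $x\mathrel{R_{n-1}}t$, $y\mathrel{R_{n-1}}z$. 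Once that description is in hand, the equality of the two sides as subobjects of $X^4$ is immediate (intersecting the $S$-conditions over $j\in I$ while the two $R_{n-1}$-conditions are untouched), and the right-adjoint layer becomes redundant. So both routes reduce to the same finite-limit computation; the paper simply states the reduction to $\Set$ without the categorical preamble, and your write-up would be tightened by doing the same.
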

\begin{proof} 
It suffices to check this in the category $\Set$ of sets and functions.
\end{proof}
 
\begin{proposition}\label{distrib} 
Consider $n\geq 3$. Let $\X$ be an exact Mal'tsev category. Given a distributive collection of equivalence relations $(R_i)_{i\in n}$, we have
\begin{align*}
	\big(\bigjoin_{i=0}^{n-2}R_i\big) \boxvoid R_{n-1}&= \bigjoin_{i=0}^{n-2}(R_i\boxvoid R_{n-1}).
\end{align*}
Moreover, 
\begin{align*}
	\big(\bigjoin_{i=0}^k\bigmeet_{j\in J_i}R_j\big)\boxvoid R_{n-1} &=\bigjoin_{i=0}^k\bigmeet_{j\in J_i}\big(R_j\boxvoid R_{n-1}\big) 
\end{align*}
for all $J_0$, $J_1$, \dots, $J_k\subseteq n$ with $k\geq 1$ such that $J_i \cap J_j=\emptyset$ whenever $i\neq j$.
\end{proposition}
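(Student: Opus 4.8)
The plan is to derive both identities from a single statement about arbitrary distributive collections, which I would prove by induction on the number of joinands, with Proposition~\ref{Proposition Distr vs Ext 3} supplying the essential input at each step.

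First I would isolate the following \emph{claim}: for any distributive collection $(T_i)_{0\leq i\leq N}$ of equivalence relations on an object of $\X$ (with $N\geq 1$), one has
\[
\big(\bigjoin_{i=0}^{N-1}T_i\big)\boxvoid T_N=\bigjoin_{i=0}^{N-1}(T_i\boxvoid T_N)
\]
as equivalence relations over $T_N$. The first identity of the proposition is then the instance $N=n-1$, $T_i=R_i$. For the ``moreover'' part I would put $S_i=\bigmeet_{j\in J_i}R_j$ for $0\leq i\leq k$; taking the $J_i$ to be subsets of $\{0,\dots,n-2\}$ (in line with the hypothesis of Proposition~\ref{stability by intersection}), the sets $J_0,\dots,J_k,\{n-1\}$ are pairwise disjoint non-empty subsets of $n$, so the second part of Remark~\ref{closure of Dn by finitary intersection} makes $(S_0,\dots,S_k,R_{n-1})$ a distributive collection; the claim, applied to it with $N=k+1$, gives $\big(\bigjoin_{i=0}^{k}S_i\big)\boxvoid R_{n-1}=\bigjoin_{i=0}^{k}(S_i\boxvoid R_{n-1})$. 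Rewriting each term as $S_i\boxvoid R_{n-1}=\bigmeet_{j\in J_i}(R_j\boxvoid R_{n-1})$ by means of Proposition~\ref{stability by intersection} turns the right-hand side into $\bigjoin_{i=0}^{k}\bigmeet_{j\in J_i}(R_j\boxvoid R_{n-1})$, which is precisely the second identity. (If $n-1$ is allowed to lie in some $J_i$, that case requires a separate, elementary argument; I would expect to reduce it to the one just treated.)

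The heart of the matter is the induction establishing the claim. I would prove $\big(\bigjoin_{i=0}^{m}T_i\big)\boxvoid T_N=\bigjoin_{i=0}^{m}(T_i\boxvoid T_N)$ for all $0\leq m\leq N-1$ by induction on $m$, the case $m=0$ being trivial and the case $m=N-1$ being the claim. For the inductive step, with $m+1\leq N-1$, I would set $P=\bigjoin_{i=0}^{m}T_i$ and consider the triple of equivalence relations $(P,T_{m+1},T_N)$ on the ambient object. The strategy is to verify for this triple the purely lattice-theoretic condition~(iii) of Proposition~\ref{Proposition Distr vs Ext 3}, namely $T_N\meet(P\join T_{m+1})=(T_N\meet P)\join(T_N\meet T_{m+1})$: applying distributivity of $(T_i)_i$ with the singletons $J_0=\{N\}$ and $J_l=\{l-1\}$ for $1\leq l\leq m+2$, which are pairwise disjoint because $m+1<N$, yields $T_N\meet\big(\bigjoin_{i=0}^{m+1}T_i\big)=\bigjoin_{i=0}^{m+1}(T_N\meet T_i)$ and, likewise, $T_N\meet P=\bigjoin_{i=0}^{m}(T_N\meet T_i)$, so both sides of condition~(iii) collapse to $\bigjoin_{i=0}^{m+1}(T_N\meet T_i)$. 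Proposition~\ref{Proposition Distr vs Ext 3} then hands us the equivalent condition~(vi), that is, $(P\join T_{m+1})\boxvoid T_N=(P\boxvoid T_N)\join(T_{m+1}\boxvoid T_N)$ as equivalence relations over $T_N$; combining this with the induction hypothesis $P\boxvoid T_N=\bigjoin_{i=0}^{m}(T_i\boxvoid T_N)$ closes the step.

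I expect the main obstacle to be conceptual rather than computational: one must make sure one is genuinely entitled to invoke Proposition~\ref{Proposition Distr vs Ext 3} at each stage, the subtle point being that the relation $P$ in the triple $(P,T_{m+1},T_N)$ is itself a join, so the congruence identity needed is a consequence of the \emph{full} distributivity hypothesis on $(T_i)_i$ (through the above choice of the $J_l$), not merely of the distributivity of three of the original relations---compare Remark~\ref{Remark Not Generated by Binary}. A smaller issue to settle once at the outset is the compatibility of the two ways of viewing a parallelistic relation $A\boxvoid B$ as an equivalence relation---``over $A$'' or ``over $B$''---together with the symmetry $A\boxvoid B\cong B\boxvoid A$ that lets one pass between them; this is what aligns Proposition~\ref{stability by intersection} and condition~(vi) of Proposition~\ref{Proposition Distr vs Ext 3} with the statement to be proved.
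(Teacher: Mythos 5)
Your proposal is correct and follows essentially the same route as the paper: an induction on the number of joinands whose inductive step applies the three-relation equivalence (iii)\,$\Leftrightarrow$\,(vi) of Proposition~\ref{Proposition Distr vs Ext 3} to the triple $\bigl(\bigjoin_{i\le m}T_i,\,T_{m+1},\,T_N\bigr)$, the lattice identity being supplied by the full distributivity hypothesis; the ``moreover'' part is then deduced exactly as in the paper via Remark~\ref{closure of Dn by finitary intersection} and Proposition~\ref{stability by intersection}. Your explicit attention to the symmetry $A\boxvoid B\cong B\boxvoid A$ and to the edge case $n-1\in J_i$ only makes precise points the paper leaves implicit.
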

\begin{proof} 
We prove the first equation by induction. For $n=3$, the result follows from Proposition~\ref{Proposition Distr vs Ext 3}. Now for
$n> 3$, assume that
\[
\big(\bigjoin_{i=0}^{n-3}R_i\big)
\boxvoid R_{n-1}= \bigjoin_{i=0}^{n-3}(R_i\boxvoid
R_{n-1}).
\]
From the assumption that the collection $(R_i)_{i\in n}$ is distributive, we deduce
\begin{align*}
	\big((\bigjoin_{i=0}^{n-3}R_i)\join
R_{n-2}\big) \meet R_{n-1} &= \big(\bigjoin_{i=0}^{n-2}R_i\big)\meet R_{n-1}
= \bigjoin_{i=0}^{n-2}\big(R_i\meet R_{n-1}\big)\\
&= \bigjoin_{i=0}^{n-3}\big(R_i\meet R_{n-1}\big)\join \big(R_{n-2}\meet R_{n-1}\big) \\
&=\big((\bigjoin_{i=0}^{n-3}R_i)\meet R_{n-1}\big)\join
(R_{n-2}\meet R_{n-1}).
\end{align*}
Hence from Proposition~\ref{Proposition Distr vs Ext 3}, it follows that
\[
\big((\bigjoin_{i=0}^{n-3}R_i)\join
R_{n-2}\big) \boxvoid R_{n-1} =
\big((\bigjoin_{i=0}^{n-3}R_i)\boxvoid
R_{n-1}\big)\join (R_{n-2}\boxvoid R_{n-1}).
\]
Thus we see that 
\begin{align*}
 \big(\bigjoin_{i=0}^{n-2}R_i\big) \boxvoid R_{n-1} &= \big((\bigjoin_{i=0}^{n-3}R_i)\join
R_{n-2}\big) \boxvoid R_{n-1} \\
 &= \big((\bigjoin_{i=0}^{n-3}R_i)\boxvoid
R_{n-1}\big)\join ( R_{n-2}\boxvoid R_{n-1})\\
 &=\bigjoin_{i=0}^{n-3}(R_i\boxvoid
R_{n-1})\join (R_{n-2}\boxvoid R_{n-1})
 =\bigjoin_{i=0}^{n-2}(R_i\boxvoid R_{n-1}),
\end{align*}
which finishes the proof of the first claim.

Following Remark \ref{closure of Dn by finitary intersection}, the second claim is a consequence of the first: it suffices to put 
$S_i=\bigmeet_{j\in J_i}R_j$.
\end{proof}

\begin{proposition}\label{Proposition Square Preserves Distributivity}\label{closure of Dn under Dk}
For $n\geq 4$, let $(R_i)_{i\in n}$ be a distributive $n$-tuple of equivalence relations on an object $X$. Then for any $m\in n$, the $(n-1)$-tuple of equivalence relations $(R_{i}\square R_m)_{i\in n-1}$ on $R_{m}$ is still distributive.
\end{proposition}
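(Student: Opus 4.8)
The plan is to verify the congruence distributivity condition for the $(n-1)$-tuple $(R_i\square R_m)_i$ on $R_m$ directly. After relabelling we may assume $m=n-1$. Fix pairwise disjoint non-empty $J_0,\dots,J_k\subseteq n-1$ with $k\geq 1$ and write $S_i=\bigmeet_{j\in J_i}R_j$. Since the proof of Proposition~\ref{stability by intersection} is carried out in $\Set$, the underlying fact---that $R_{n-1}\square(-)$ preserves finite intersections of equivalence relations---applies to \emph{arbitrary} equivalence relations, and in particular $\bigmeet_{j\in J}(R_j\square R_{n-1})=R_{n-1}\square\bigl(\bigmeet_{j\in J}R_j\bigr)$ for each of $J=J_0$, $J=J_i$ and $J=J_0\cup J_i$. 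Hence the required identity reads
\[
 (R_{n-1}\square S_0)\meet\bigjoin_{i=1}^k(R_{n-1}\square S_i)=\bigjoin_{i=1}^k R_{n-1}\square(S_0\meet S_i).
\]
The case $k=1$ is trivial, so assume $k\geq 2$. Proposition~\ref{distrib} (its ``moreover'' part, applied to the disjoint sets $J_1,\dots,J_k$) gives $\bigjoin_{i=1}^k(R_{n-1}\square S_i)=R_{n-1}\square\bigl(\bigjoin_{i=1}^k S_i\bigr)$; combining this with Proposition~\ref{stability by intersection} and with the distributivity of $(R_i)_{i\in n}$ applied to $J_0,\dots,J_k$, the left-hand side above rewrites as $R_{n-1}\square\bigl(\bigjoin_{i=1}^k(S_0\meet S_i)\bigr)$.

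Everything therefore reduces to the assertion that $R_{n-1}\square(-)$ preserves the join $\bigjoin_{i=1}^k(S_0\meet S_i)$, and I expect this to be the crux. The obstacle is visible: the index sets $J_0\cup J_i$ of the relations $S_0\meet S_i$ are not pairwise disjoint---they all contain $J_0$---so Proposition~\ref{distrib} does not apply to them. To get around this I would argue by induction on the number $l$ of joinands, proving $R_{n-1}\square\bigl(\bigjoin_{i=1}^l(S_0\meet S_i)\bigr)=\bigjoin_{i=1}^l R_{n-1}\square(S_0\meet S_i)$; the case $l=1$ is trivial. For the inductive step, Proposition~\ref{Proposition Distr vs Ext 3}(vi) lets one split off the term $S_0\meet S_{l+1}$ \emph{as soon as} the $3$-tuple $\bigl(\bigjoin_{i=1}^l(S_0\meet S_i),\ S_0\meet S_{l+1},\ R_{n-1}\bigr)$ of equivalence relations on $X$ is distributive.

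The distributivity of this auxiliary $3$-tuple is where the distributivity of the original family $(R_i)_{i\in n}$ enters: by Proposition~\ref{Proposition Distr vs Ext 3}(iii) it suffices to check $R_{n-1}\meet\bigjoin_{i=1}^{l'}(S_0\meet S_i)=\bigjoin_{i=1}^{l'} R_{n-1}\meet(S_0\meet S_i)$ for $l'\leq l+1$. Rewriting $\bigjoin_i(S_0\meet S_i)=S_0\meet\bigjoin_i S_i$ (distributivity of $(R_i)_{i\in n}$ on the disjoint sets $J_0,\dots,J_{l'}$) and then meeting with $R_{n-1}$, one recognises an instance of the distributivity condition for $(R_i)_{i\in n}$ with distinguished set $\{n-1\}\cup J_0$ and the still pairwise disjoint sets $J_1,\dots,J_{l'}$: the key trick is that absorbing the common part $J_0$ together with $\{n-1\}$ restores disjointness. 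This closes the induction and hence proves the proposition. Two features worth noting: no induction on $n$ is needed---the case $n=3$ (Proposition~\ref{Proposition Distr vs Ext 3}) together with Propositions~\ref{stability by intersection} and~\ref{distrib} does the whole job---and the only bookkeeping throughout is the harmless identification of ``$R_{n-1}\square A$ viewed on $R_{n-1}$'' with ``$A\square R_{n-1}$ viewed on $A$'', by symmetry of the $\square$-construction.
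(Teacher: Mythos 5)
Your proof is correct and, up to its last step, coincides with the paper's computation: reduce to $m=n-1$, pull the meets and the join inside $(-)\square R_{n-1}$ via Propositions~\ref{stability by intersection} and~\ref{distrib}, and apply distributivity of $(R_i)_{i\in n}$ to arrive at $\bigl(\bigjoin_{i=1}^k(S_0\meet S_i)\bigr)\square R_{n-1}$ with $S_i=\bigmeet_{j\in J_i}R_j$. Where you genuinely add something is at the point you rightly call the crux: the paper's proof concludes by citing Proposition~\ref{distrib} once more to distribute $(-)\square R_{n-1}$ over the join $\bigjoin_{i=1}^k(S_0\meet S_i)$, even though, as you observe, the index sets $J_0\cup J_i$ all contain $J_0$ and so are not pairwise disjoint, which is what the ``moreover'' clause of that proposition requires. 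Your inductive argument --- splitting off one joinand at a time via the equivalence {\rm (iii)}~$\Leftrightarrow$~{\rm (vi)} of Proposition~\ref{Proposition Distr vs Ext 3}, and verifying the needed lattice identity $R_{n-1}\meet\bigjoin_{i}(S_0\meet S_i)=\bigjoin_i\bigl(R_{n-1}\meet S_0\meet S_i\bigr)$ by absorbing $J_0$ into the distinguished set $\{n-1\}\cup J_0$ to restore disjointness --- is precisely the justification that is elided there; equivalently, it shows that the family $(S_0\meet S_1,\dots,S_0\meet S_k,R_{n-1})$ satisfies enough of the distributivity condition for the first claim of Proposition~\ref{distrib} to apply to it. So yours is not a different route but a more careful version of the same one, and it correctly identifies and closes the one point where the citation in the paper outruns the statement being cited.
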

\begin{proof}
Clearly, without any loss of generality we may assume that $m=n-1$. For any choice of subsets $J_0$, $J_1$, \dots, $J_k$ of $n-1$ with $1\leq k\leq n-2$
such that $J_i \cap J_j=\emptyset$ when $i\neq j$, we calculate
\begin{align*}
 &\big(\bigmeet_{i\in J_0}(R_i \boxvoid R_{n-1})\big)\meet \bigjoin_{i=1}^k\bigmeet_{j\in J_i}(R_j\boxvoid R_{n-1}) \\
 \overset{\text{\eqref{distrib}}}{=}&
 \big(\bigmeet_{i\in J_0}(R_i \boxvoid R_{n-1})\big)\meet \big(\big(\bigjoin_{i=1}^k\bigmeet_{j\in J_i}R_j\big)\boxvoid R_{n-1} \big)\\
 \overset{\text{\eqref{stability by intersection}}}{=}&\big(\big(\bigmeet_{i\in J_0}R_i\big) \boxvoid R_{n-1})\big)\meet \big(\big(\bigjoin_{i=1}^k\bigmeet_{j\in J_i}R_j\big)\boxvoid R_{n-1} \big)\\
 \overset{\text{\eqref{stability by intersection}}}{=}&\big( \big(\bigmeet_{i\in J_0}R_i\big)\meet\big(\bigjoin_{i=1}^k\bigmeet_{j\in J_i}R_j\big)\big)\boxvoid R_{n-1} \\
 \overset{\phantom{\text{\eqref{stability by intersection}}}}{=}& \big(\bigjoin_{i=1}^k\bigmeet_{j\in J_0\cup J_i}R_j\big)\boxvoid R_{n-1} 
 \overset{\text{\eqref{distrib}}}{=}\bigjoin_{i=1}^k\bigmeet_{j\in J_0\cup J_i}\big(R_j\boxvoid R_{n-1}\big).
\end{align*}
This completes the proof.
\end{proof}

\begin{proposition}\label{Proposition Distr vs Ext}
	In an exact Mal'tsev category $\X$, consider an object $X$ and an $n$-tuple $(R_i)_{i\in n}$ of equivalence relations on $X$, where $n\geq 3$. Write $F=\Coeq_{i\in n}(R_i)$ for the induced $n$-fold regular epimorphism. Then the following conditions are equivalent:
	\begin{tfae}
		\item $F$ is an $n$-cubic extension;
		\item the collection of equivalence relations $(R_i)_{i\in n}$ on $X$ is distributive;
		\item the parallelistic $n$-fold equivalence relation $\bigboxvoid_{i\in n}R_i$ is distributive.
	\end{tfae} 
\end{proposition}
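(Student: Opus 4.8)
The plan is to argue by induction on $n$, the base case $n=3$ being Proposition~\ref{Proposition Distr vs Ext 3}. The equivalence of (ii) and (iii) is immediate, since $\bigboxvoid_{i\in n}R_i$ is distributive exactly when the collection $(R_i)_{i\in n}$ is; so throughout we concentrate on the equivalence of (i) and (ii). So fix $n\geq 4$ and assume the proposition at all levels $<n$; by Remark~\ref{closure of Dn by finitary intersection} this means in particular that for every proper $S\subsetneq n$ the collection $(R_i)_{i\in S}$ is distributive if and only if $\Coeq_{i\in S}(R_i)$ is an $|S|$-cubic extension.

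The inductive step rests on Proposition~\ref{Proposition exact-fork} and the symmetry of the extension property (Remark~\ref{Remark Symmetry}). Regard $F=\Coeq_{i\in n}(R_i)$ as a morphism of $(n-1)$-fold arrows in the $R_{n-1}$-direction. Its domain is $B=\Coeq_{i\in n-1}(R_i)$, and---exactly as in the case $n=3$ treated in the proof of Proposition~\ref{Proposition Distr vs Ext 3}, using the remark following Proposition~\ref{Proposition exact-fork} to compute kernel pairs pointwise---its kernel pair in that direction is the $(n-1)$-fold regular epimorphism $\Coeq_{i\in n-1}(R_{n-1}\square R_i)$ induced on $R_{n-1}$ by the $(n-1)$-tuple $(R_{n-1}\square R_i)_{i\in n-1}$. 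By the inductive definition of a cubic extension together with Remark~\ref{Remark Symmetry}, $B$ being an $(n-1)$-cubic extension is one of the defining conditions for $F$ to be an $n$-cubic extension, so Proposition~\ref{Proposition exact-fork} applies and tells us that $F$ is an $n$-cubic extension precisely when both $B$ and $\Coeq_{i\in n-1}(R_{n-1}\square R_i)$ are $(n-1)$-cubic extensions; by the induction hypothesis, precisely when both $(R_i)_{i\in n-1}$ and $(R_{n-1}\square R_i)_{i\in n-1}$ are distributive. Thus the proposition reduces to the purely order-theoretic assertion that $(R_i)_{i\in n}$ is distributive if and only if $(R_i)_{i\in n-1}$ and $(R_{n-1}\square R_i)_{i\in n-1}$ are both distributive---and, by Remark~\ref{Remark Symmetry}, here $n-1$ may be replaced by any $m\in n$.

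The forward implication of this order-theoretic assertion is precisely Remark~\ref{closure of Dn by finitary intersection}(1) together with Proposition~\ref{Proposition Square Preserves Distributivity}, which settles (ii)$\Rightarrow$(i). For the converse one must verify, from the distributivity of $(R_i)_{i\in n\setminus\{m\}}$ and $(R_m\square R_i)_{i\in n\setminus\{m\}}$ for all $m\in n$, every instance of the distributivity condition for $(R_i)_{i\in n}$, that is, for each family of pairwise disjoint $J_0,\dots,J_k\subseteq n$ with $k\geq 1$. When $\bigcup_i J_i\neq n$ the identity only involves the $R_i$ with $i$ in a proper subset of $n$, hence follows from the distributivity of the corresponding proper subcollection. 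When $\bigcup_i J_i=n$ the family is a partition of $n$; one then chooses $m$ in one of the $J_i$ and translates the desired identity into $\ERel(R_m)$ along $R_m\square(-)\colon\ERel(X)\to\ERel(R_m)$, which is an order-embedding because it is split by pulling back along the diagonal $X\to R_m$. The translated identity is established with the help of Proposition~\ref{stability by intersection} (which commutes $R_m\square(-)$ with meets), the distributivity of $(R_m\square R_i)_{i\in n\setminus\{m\}}$ and of $(R_i)_{i\in n\setminus\{m\}}$ furnished by the induction hypothesis, a projection (Frobenius) formula along the two projections $R_m\to X$ together with the fact that chains built from the relations $R_m\square S_i$ project to chains built from the $S_i$, Proposition~\ref{distrib} to move $R_m\square(-)$ past the joins of meets of the $R_i$ that survive, and the modular law for congruences.

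The main obstacle will be this descent step for the genuinely $n$-ary identities---above all the ``top'' ones, of the shape $R_m\meet\bigjoin_{i\in n\setminus\{m\}}R_i=\bigjoin_{i\in n\setminus\{m\}}(R_m\meet R_i)$, which by Remark~\ref{Remark Not Generated by Binary} are not consequences of the distributivity of any proper subcollection alone, so that no purely ``lower-dimensional'' argument can reach them. The difficulty is that $R_m\square(-)$ does \emph{not} preserve joins, so it cannot be applied naively term by term; the identity has first to be rewritten---using modularity and the distributivity of the proper subcollections---into a shape in which every remaining join is a join of meets of the $R_i$, to which Proposition~\ref{distrib} and the Frobenius/projection formulas apply, before $R_m\square(-)$ can be pushed through. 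Carrying out this rewriting uniformly over all partitions of $n$ is where the real work lies.
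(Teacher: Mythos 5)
Your treatment of (ii)$\Leftrightarrow$(iii) and of (ii)$\Rightarrow$(i) is correct and coincides with the paper's: view $F$ as an arrow of $(n-1)$-cubes, apply Proposition~\ref{Proposition exact-fork}, and feed Remark~\ref{closure of Dn by finitary intersection} and Proposition~\ref{Proposition Square Preserves Distributivity} into the induction hypothesis. The reduction you extract from this---$F$ is an $n$-cubic extension if and only if, for any (equivalently, every) $m$, both $(R_i)_{i\neq m}$ and $(R_m\square R_i)_{i\neq m}$ are distributive---is also valid.

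The gap is in (i)$\Rightarrow$(ii). You correctly identify that the genuinely $n$-ary identities (those whose index sets partition $n$, above all $R_m\meet\bigjoin_{i\neq m}R_i=\bigjoin_{i\neq m}(R_m\meet R_i)$) are exactly the ones not reachable from distributivity of proper subcollections, and you propose to obtain them by transporting the identity into $\ERel(R_m)$ along the order-embedding $R_m\square(-)$. But you then concede that $R_m\square(-)$ does not preserve joins, that the identity must first be rewritten into a transportable shape using a Frobenius/projection formula, and that ``carrying out this rewriting uniformly over all partitions of $n$ is where the real work lies.'' That rewriting \emph{is} the content of the proposition in this case, so the proof is not complete; moreover the projection formula you invoke is established nowhere in the paper and would itself require an argument. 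The paper avoids the descent altogether: Lemma~\ref{some obvious results about extensions} shows that meets and joins of disjoint sub-collections of the $R_i$ again induce lower-dimensional cubic extensions (meets arise as comparison maps to pullbacks, joins as diagonals of extension squares), so a regrouped collection such as $(S_0,\,S_1\join S_2,\,S_3,\dots,S_{k-1}\join S_k)$---which has strictly fewer members than $(S_0,\dots,S_k)$---is distributive by the induction hypothesis, and the full identity is then assembled by distributing over the grouped joins and expanding each term $S_0\meet(S_{2i-1}\join S_{2i})$ via the distributive triple $(S_0,S_{2i-1},S_{2i})$, which is likewise covered by the Lemma and the induction hypothesis. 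To repair your argument you must either supply the missing lattice computation for the descent, or prove and use an analogue of Lemma~\ref{some obvious results about extensions}.
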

\begin{proof}[Proof of {\rm (ii) $\Rightarrow$ (i)} and {\rm (ii) $\Leftrightarrow$ (iii)}]
Conditions (ii) and (iii) are equivalent by definition. We now use induction on $n$ to give a proof that (ii) implies (i). Recall that the case $n=3$ is covered by Proposition~\ref{Proposition Distr vs Ext 3}. So, for some $n\geq 4$, let us assume that the claim holds for $(n-1)$, and then prove it for $n$. We view the $n$-fold regular epimorphism $F=\Coeq_{i\in n}(R_i)$ as a regular epimorphism of $(n-1)$-fold regular epimorphisms $F\colon{\dom (F)\to \cod(F)}$ in the $(n-1)$-direction, so that $\dom(F)=\Coeq_{i\in n-1}(R_i)$, and $\Eq(F)=\Coeq_{i\in n-1}(R_{i}\square R_{n-1})$. 

Since the collection $(R_i)_{i\in n}$ is distributive, the induction hypothesis implies that $\dom(F)$ is an $(n-1)$-cubic extension. Hence by Proposition~\ref{Proposition exact-fork}, $F$ is an $n$-cubic extension if and only if $\Eq(F)$ is an $(n-1)$-cubic extension. By the induction hypothesis, this happens as soon as the $(n-1)$-tuple of equivalence relations $(R_{i}\square R_{n-1})_{i\in n-1}$ on $R_{n-1}$ is distributive. This follows from Proposition~\ref{Proposition Square Preserves Distributivity}, if we use that the $(n-1)$-fold equivalence relation $\Eq^{n-1}(\Eq(f))$ is sent to the $(n-1)$-tuple of equivalence relations $(R_{i}\square R_{n-1})_{i\in n-1}$ by the forgetful functor $U$. 
\end{proof}

We still need to prove that (i) implies (ii). For this we need Lemma~\ref{some obvious results about extensions}.

\begin{lemma}\label{some obvious results about extensions}
Let $\X$ be an exact Mal'tsev category and let
$(R_i)_{i\in n}$ be equivalence relations on an object $X$ of
$\X$. Suppose that $\Coeq(R_0,\dots,R_{n-1})$ is an $n$-cubic
extension. Then for every $\emptyset \subsetneq I\subseteq n$
where $|I|=k$ and $I=\{i_0,\dots,i_{k-1}\}$, the following holds:
\begin{enumerate}
 \item the $k$-cube $\Coeq(R_{i_0},\dots,R_{i_{k-1}})$ is a $k$-cubic extension.
\end{enumerate}
Furthermore, when $J\subseteq n$ such that $I\cap J=\emptyset$,
the following hold:
\begin{enumerate}
\setcounter{enumi}{1}
 \item the $k$-cube $\Coeq(R_{i_0},\dots,R_{i_{k-1}}\meet
\bigmeet_{j\in J}R_j)$ is a $k$-cubic extension;
 \item the $k$-cube $\Coeq(R_{i_0},\dots,R_{i_{k-1}}\join
\bigjoin_{j\in J}R_j)$ is a $k$-cubic extension.
\end{enumerate}
Now assume that $n\geq 4$, and let $J_0$,
$J_1$, \dots, $J_{k-1}$ be non-empty subsets of $n-1$ with $2\leq k\leq n-1$ such that $J_i\cap J_j=\emptyset$ for $i\neq j$. We have:
\begin{enumerate}
\setcounter{enumi}{3}
\item the $k$-cube $\Coeq(\bigmeet_{j\in
J_0}R_{j},\dots,\bigmeet_{j\in J_k}R_{j})$ is a
$k$-cubic extension;
\item the $k$-cube $\Coeq(\bigmeet_{j\in
J_0}R_{j},\dots,\bigmeet_{j\in I\cap J_l}R_{j},\dots,
\bigmeet_{j\in J_k}R_{j})$ is a $k$-cubic extension, for any $l\in k$ and each $I\subseteq n$ such that $I\cap J_i=\emptyset$;
\item the $k$-cube $\Coeq(\bigmeet_{j\in
J_0}R_{j},\dots,\big(\bigmeet_{j\in
J_l}R_{j}\big)\join \big(\bigmeet_{j\in I}R_{j}\big)
,\dots, \bigmeet_{j\in J_k}R_{j})$ is a $k$-cubic
extension for any $l\in k$ and each $I\subseteq n$ such that $I\cap J_i=\emptyset$.
\end{enumerate}
\end{lemma}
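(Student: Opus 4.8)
The plan is to prove all six items by induction, essentially reducing each one to item (1) applied to a suitably modified collection of equivalence relations, and then invoking the closure properties of distributivity (Remark~\ref{closure of Dn by finitary intersection}) together with Proposition~\ref{Proposition Distr vs Ext} to transfer the extension property back and forth between ``$F$ is a cubic extension'' and ``the collection is distributive''. The key observation is that Proposition~\ref{Proposition Distr vs Ext} tells us that, for a tuple of length at least $3$, being an induced $n$-cubic extension is \emph{equivalent} to distributivity of the tuple; so whenever we have at least three relations in play we may freely pass between the combinatorial condition and the geometric one. For tuples of length $1$ or $2$ the extension property is automatic, which handles the small cases separately.

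First I would establish (1). By Proposition~\ref{Proposition Distr vs Ext}, since $\Coeq(R_0,\dots,R_{n-1})$ is an $n$-cubic extension, the full collection $(R_i)_{i\in n}$ is distributive; by Remark~\ref{closure of Dn by finitary intersection}(1), every sub-collection $(R_{i_j})_{j\in k}$ is distributive; and then, applying Proposition~\ref{Proposition Distr vs Ext} again in the other direction (when $k\geq 3$; when $k\leq 2$ it is trivial), $\Coeq(R_{i_0},\dots,R_{i_{k-1}})$ is a $k$-cubic extension. Items (2) and (3) then follow immediately by the same argument applied to the collection obtained from $(R_{i_0},\dots,R_{i_{k-1}})$ by replacing the last entry with $R_{i_{k-1}}\meet\bigmeet_{j\in J}R_j$, respectively $R_{i_{k-1}}\join\bigjoin_{j\in J}R_j$: one checks that these enlarged families are still distributive. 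For the meet this is Remark~\ref{closure of Dn by finitary intersection}(2) (a meet of $R$'s indexed by disjoint subsets), and the key point is that $\{i_0\},\dots,\{i_{k-1}\},J$ are pairwise disjoint so the hypothesis of that remark applies. For the join, I would argue that distributivity of $(R_i)_{i\in n}$ passes to $(R_{i_0},\dots,R_{i_{k-1}},\bigjoin_{j\in J}R_j)$: any distributivity identity for this new family unwinds, using the associativity of $\join$ and $\meet$ and the identities already available, into an instance of distributivity among the original $R_i$'s with index sets $\{i_0\},\dots,\{i_{k-1}\}$ and copies of (subsets of) $J$. This bookkeeping is the routine part.

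Items (4)--(6) concern $n\geq 4$ and $k\leq n-1$, so the relevant tuples have length $k\leq n-1\geq 3$; but note $k$ could be $2$, in which case the statement is again trivial, so assume $k\geq 3$. For (4): the $k$-tuple $(\bigmeet_{j\in J_0}R_j,\dots,\bigmeet_{j\in J_k}R_j)$ is distributive by Remark~\ref{closure of Dn by finitary intersection}(2), and hence by Proposition~\ref{Proposition Distr vs Ext} the induced $k$-cube is a $k$-cubic extension. For (5), we replace the entry $S_l = \bigmeet_{j\in J_l}R_j$ by $\bigmeet_{j\in I\cap J_l}R_j$, which is a meet over a smaller (still disjoint from the others) index set, so the whole tuple is still of the form covered by Remark~\ref{closure of Dn by finitary intersection}(2), and the argument of (4) applies verbatim. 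For (6), we replace $S_l$ by $\big(\bigmeet_{j\in J_l}R_j\big)\join\big(\bigmeet_{j\in I}R_j\big)$ with $I$ disjoint from the $J_i$; here I would combine the technique of (3) (distributivity survives adjoining a join of meets) with that of (5), reducing to an instance of distributivity among the original $R_i$'s after expanding the join. Then Proposition~\ref{Proposition Distr vs Ext} finishes it.

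The main obstacle I anticipate is purely combinatorial: verifying that the enlarged families in (3) and especially (6) are genuinely distributive, i.e.\ that every instance of the distributivity condition for the modified tuple ``is'' an instance of the distributivity condition for $(R_i)_{i\in n}$ (possibly after applying associativity/commutativity of $\join,\meet$ and re-indexing). This requires being careful that when one takes meets of the new generators $S_i=\bigmeet_{j\in J_i}R_j$ indexed by a family of pairwise disjoint subsets of $\{0,\dots,k\}$, the corresponding unions $\bigcup J_i$ over these subsets are again pairwise disjoint subsets of $n$ — which they are, precisely because the $J_i$ are pairwise disjoint. For (6) one must additionally check that inserting a join $\big(\bigmeet_{J_l}R\big)\join\big(\bigmeet_{I}R\big)$ with $I$ disjoint from all the $J_i$ does not break this disjointness bookkeeping. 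Once the disjointness patterns are tracked correctly, each item is a one-line consequence of Proposition~\ref{Proposition Distr vs Ext} and Remark~\ref{closure of Dn by finitary intersection}; the ``obvious'' in the lemma's name is earned by this routine, if slightly tedious, index chase, which I would carry out explicitly only for (6) and leave the analogous verifications for (2), (3), (5) to the reader.
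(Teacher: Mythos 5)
Your argument is circular in the context of this paper. Every item in your proposal rests on the first step of item (1): invoking Proposition~\ref{Proposition Distr vs Ext} in the direction ``$\Coeq(R_0,\dots,R_{n-1})$ is an $n$-cubic extension $\Rightarrow$ $(R_i)_{i\in n}$ is distributive''. But at the point where this lemma is stated, only the implications (ii)~$\Rightarrow$~(i) and (ii)~$\Leftrightarrow$~(iii) of that proposition have been proved; the paper explicitly announces that the remaining implication (i)~$\Rightarrow$~(ii) \emph{needs the present lemma}, and its proof applies items (1) and (3) to cubes of the form $\Coeq(S_0,S_1\join S_2,S_3,\dots)$ to run an induction. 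So you may not assume that an $n$-cubic extension has a distributive tuple of kernel pairs: that is essentially the statement this lemma exists to help establish. The only case legitimately available to you is $n=3$, via Proposition~\ref{Proposition Distr vs Ext 3}, and your proof does not reduce to that case.

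The intended proof avoids congruence distributivity entirely and works with the formal properties of cubic extensions. For (1): view the $n$-cube as an arrow of $(n-1)$-cubes in the $R_{n-1}$-direction; its domain is $\Coeq(R_0,\dots,R_{n-2})$, and the domain of an $n$-cubic extension is an $(n-1)$-cubic extension; conclude by induction, using the symmetry of the extension property (Remark~\ref{Remark Symmetry}) to drop an arbitrary index rather than the last one. For (2) and (3): reduce to $J=\{n-1\}$ and regard the $n$-cube as a square of $(n-2)$-cubes; the comparison to the pullback of that square is exactly $\Coeq(R_0,\dots,R_{n-2}\meet R_{n-1})$ and is an $(n-1)$-cubic extension by the definition of an $n$-cubic extension, while the diagonal of the square is $\Coeq(R_0,\dots,R_{n-2}\join R_{n-1})$ and is an $(n-1)$-cubic extension because extensions compose. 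Items (4)--(6) then follow from (2) and (3) by induction. A secondary problem, independent of the circularity: your claim that adjoining $\bigjoin_{j\in J}R_j$, or $\bigl(\bigmeet_{j\in J_l}R_j\bigr)\join\bigl(\bigmeet_{j\in I}R_j\bigr)$, to a distributive family yields a distributive family is not an instance of Remark~\ref{closure of Dn by finitary intersection}, and the ``routine unwinding'' you defer is not routine: a meet taken against such a join is not of the form $\bigmeet_{j\in J'}R_j$, so the identities you need are not literally instances of the distributivity condition for the original family, and establishing them would require a genuine argument rather than re-indexing.
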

\begin{proof}
For the proof of (1) it suffices to show that the $(n-1)$-fold regular epimorphism $\Coeq(R_0,\dots,R_{n-2})$ is an $(n-1)$-cubic extension. The result then follows by induction and the symmetry in the concept of an extension (see Remark~\ref{Remark Symmetry}). Considering the $n$-cube $\Coeq(R_0,\dots,R_{n-1})$ as an arrow in the $n-1$ direction---so that $R_{n-1}$ occurs in its kernel pair---we find $\Coeq(R_0,\dots,R_{n-2})$ as is domain. Since the domain of an $n$-extension is an $(n-1)$-cubic extension, we obtain the result.

For the proofs of (2) and (3) we may without any loss of generality assume that $I=n-1$ and $J=\{n-1\}$; the full statements then follow from (1) and induction on the size of $J$.

So for (2) we must show that the $(n-1)$-cube $\Coeq(R_0,\dots,R_{n-2}\meet R_{n-1})$ is an extension. Notice that we may consider the $n$-cube $\Coeq(R_0,\dots,R_{n-1})$ as a square of $(n-2)$-cubes, as follows:
\[
\xymatrix@=3em{\cdot \ar[r]^-{\Coeq(R_0,\dots,R_{n-3},R_{n-2})} \ar[d]_-{\Coeq(R_0,\dots,R_{n-3},R_{n-1})} & \ar[d] \cdot\\
\cdot \ar[r] & \cdot}
\]
By definition of an $n$-extension, all the arrows in this square are $(n-1)$-cubic extensions. Furthermore, the comparison to the induced pullback is an $(n-1)$-cubic extension as well---and this is precisely the $(n-1)$-cube $\Coeq(R_0,\dots,R_{n-2}\meet R_{n-1})$.

Similarly, for (3) we have to prove that the $(n-1)$-cube
\[
\Coeq(R_0,\dots,R_{n-2}\join R_{n-1})
\]
is an $(n-1)$-cubic extension. We use the same interpretation of the $n$-cube $\Coeq(R_0,\dots,R_{n-1})$ as above. This time, it suffices to notice that the $(n-1)$-cube $\Coeq(R_0,\dots,R_{n-2}\join R_{n-1})$ is the diagonal in the square---which is an $(n-1)$-cubic extension, because extensions compose.

(4), (5) and (6) follow from (2) and (3) by induction.
\end{proof}

\begin{proof}[Proof of {\rm (i) $\Rightarrow$ (ii)} in Proposition~\ref{Proposition Distr vs Ext}]
We shall use induction on $n$. Assume
that the result is true for $k<n$ and that
$F$ is an $n$-cubic extension. Consider $J_0$,
$J_1$, \dots, $J_k\subseteq$ of $n-1$ with $1\leq k\leq n-2$ such that $J_i
\cap J_j=\emptyset$ for $i\neq j$. We need to distinguish two cases:
\begin{itemize}
	\item $ n-1\notin \bigcup_{i=0}^kJ_i$: in this
case, the result follows by the induction hypothesis;
\item $ n-1\in \bigcup_{i=0}^kJ_i$: here we have some work to do.
\end{itemize}
Let $S_i= \bigwedge_{j\in J_i}R_j$. For any $k< n$, the $(k-1)$-fold arrow
\[
\Coeq(S_0, (S_1\join S_2),S_3,\dots, (S_{k-1}
\join S_k)),
\]
is a $(k-1)$-cubic extension by Lemma \ref{some
obvious results about extensions}. Hence by
induction, the collection $(S_i)_{i\in k}$ is distributive. We see that
\begin{align*}
 &S_0\meet \big(S_1\join S_2\join S_3\join \cdots \join S_{k-1}\join
 S_k\big)\\
 =\; &(S_0\meet \big((S_1\join S_2)\join S_3\join \cdots \join (S_{k-1}\join
 S_k)\big)\\
 =\;&\big(S_0\meet (S_1\join S_2)\big)\join(S_0\meet S_3)\join \cdots \join \big(S_0\meet (S_{k-1}\join
 S_k)\big)\\
 =\; &(S_0\meet S_1)\join(S_0\meet S_2)\join(S_0\meet S_3)\join \cdots \join (S_0\meet S_{k-1})\join (S_0\meet S_k),
\end{align*}
which finishes the proof.
\end{proof}

\begin{proof}[Proof of Theorem~\ref{3^n iff distributive parallelistic}]
	This is an immediate consequence of Proposition~\ref{Proposition Distr vs Ext} combined with Theorem~\ref{3^n iff extension}.
\end{proof}

\begin{theorem}[Denormalised $3^n$-Lemma, III]\label{3^n overview}
Let $\X$ be an exact Mal'tsev category and let $F$ be an $n$-fork in $\X$. Then the following conditions are equivalent:
\begin{tfae}
	\item $F$ is a $3^n$-diagram: all forks in the diagram $F$ in $\X$ are exact;
	\item $F=\EqFork^n(\arr^n(F))$ and $\arr^n(F)$ is an $n$-cubic extension;
	\item $F=\CoeqFork^n(\grph^n(F))$ and $\grph^n(F)$ is a distributive parallelistic $n$-fold equivalence relation;
	\item $F=\EqFork^n(\arr^n(F))$ and $F=\CoeqFork^n(\grph^n(F))$.
\end{tfae}
\end{theorem}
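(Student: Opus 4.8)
The plan is to obtain the equivalence in four moves: derive (i)$\Leftrightarrow$(ii) and (i)$\Leftrightarrow$(iii) directly from Theorem~\ref{3^n iff extension} and Theorem~\ref{3^n iff distributive parallelistic}, observe that (i)$\Rightarrow$(iv) costs nothing, and then do the actual work in proving (iv)$\Rightarrow$(i).

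For (i)$\Leftrightarrow$(ii): Theorem~\ref{3^n iff extension} tells us that $\arr^n\dashv\EqFork^n$ restricts to an adjoint equivalence $\EFork^n(\X)\simeq\Ext^n(\X)$. So if $F$ is a $3^n$-diagram, then $\arr^n(F)$ is an $n$-cubic extension and the unit component $F\to\EqFork^n(\arr^n(F))$ --- being the restriction of the unit of an adjoint equivalence --- is invertible, which is exactly (ii), reading the equality up to this canonical isomorphism as is done elsewhere in the paper. Conversely, assuming (ii), the object $F\cong\EqFork^n(\arr^n(F))$ lies in $\EqFork^n(\Ext^n(\X))$, which by the same equivalence is contained in $\EFork^n(\X)$; as the latter is closed under isomorphism, $F$ must be a $3^n$-diagram. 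The equivalence (i)$\Leftrightarrow$(iii) is proved verbatim with Theorem~\ref{3^n iff distributive parallelistic} replacing Theorem~\ref{3^n iff extension}. Then (i)$\Rightarrow$(iv) is immediate, since a $3^n$-diagram satisfies both (ii) and (iii), hence both equalities of (iv).

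The content is in (iv)$\Rightarrow$(i). First I would record that $\EqFork^n$ is fully faithful: it is assembled from the functors $\EqFork$, which are fully faithful (a morphism of kernel-pair forks is determined by its underlying morphism of arrows), by iteratively applying $\Arr(-)$, which preserves full faithfulness. Similarly $\CoeqFork^n$ is fully faithful, since $\grph^n\comp\CoeqFork^n$ is the identity functor (an easy induction from the fact that the underlying reflexive graph of the coequaliser fork of $G$ is again $G$), so the unit of $\CoeqFork^n\dashv\grph^n$ is invertible. Hence condition (iv) says precisely that $F$ lies in the essential image of $\EqFork^n$ \emph{and} in the essential image of $\CoeqFork^n$. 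The key lemma --- the step I expect to need the most care --- identifies these essential images concretely. Unwinding the definition $\EqFork^n=\EqFork\comp\Arr(\EqFork^{n-1})=\cdots$ and using that kernel pairs in a functor category are computed pointwise while kernel pairs taken in distinct directions commute, one sees that $\EqFork^n(g)$ is the $n$-fork obtained from the $n$-fold arrow $g$ by iterated kernel pairs; in particular, each constituent fork $F\comp(\alpha^+_{e,i})^{\op}$ has its underlying reflexive graph equal to the kernel relation of its underlying arrow. Conversely, an $n$-fork with this property is rebuilt from its underlying $n$-fold arrow by iterated kernel pairs, so lies in the image of $\EqFork^n$. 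Dually --- using now that in an exact Mal'tsev category every reflexive graph has a coequaliser, computed pointwise in the functor categories at hand --- the essential image of $\CoeqFork^n$ consists precisely of those $n$-forks all of whose constituent forks have their underlying arrow equal to the coequaliser of their underlying reflexive graph.

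Granting this lemma, (iv)$\Rightarrow$(i) is immediate: under (iv) every constituent fork $F\comp(\alpha^+_{e,i})^{\op}$ has, simultaneously, its underlying reflexive graph equal to the kernel relation of its underlying arrow \emph{and} its underlying arrow equal to the coequaliser of its underlying reflexive graph, which --- by the very definition of an exact fork --- means it is an exact fork. Hence $F$ is a $3^n$-diagram, which is (i). Everything apart from the essential-image lemma is formal (composition of the adjunctions of Figure~\ref{Figure n-Triangle}, together with the two preceding theorems), so that lemma --- the bookkeeping showing that iterated kernel pairs, respectively coequalisers, produce and are produced by exactly the $n$-forks whose every constituent fork is a kernel, respectively coequaliser, fork --- is the one genuine obstacle.
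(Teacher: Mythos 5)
Your proposal is correct and follows essentially the same route as the paper: (i)$\Leftrightarrow$(ii) and (i)$\Leftrightarrow$(iii) are quoted from Theorems~\ref{3^n iff extension} and~\ref{3^n iff distributive parallelistic}, (iv) follows at once, and (iv)$\Rightarrow$(i) is settled by observing that each constituent fork $F\comp(\alpha^+_{e,i})^{\op}$ has its reflexive graph equal to the kernel relation of its arrow and its arrow equal to the coequaliser of its reflexive graph, hence is exact. The paper states this last step in one sentence where you spell out the pointwise computation of kernel pairs and coequalisers, but the content is identical.
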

\begin{proof}
The equivalence between (i) and (ii) is Theorem~\ref{3^n iff extension}, and (i) and (iii) are equivalent via Theorem~\ref{3^n iff distributive parallelistic}. (ii) and (iii) together imply (iv). (iv) implies (i), because for each $e\in 3^n$ and $i\in 3$, the composite
\[
\xymatrix@1@!0@=2em{(\ThreeCat^+)^{\op}\ar@{->}[rrrrr]^-{(\alpha^+_{e,i})^{\op}}&&&&&((\ThreeCat^+)^{\op})^n\ar@{->}[rrrr]^-{F}&&&&\X
}
\]
is an exact fork: its underlying reflexive graph is the kernel pair of its underlying arrow, while conversely, its underlying arrow is the coequaliser of its underlying reflexive graph.
\end{proof}

We find the following variation on Proposition~\ref{Proposition Distr vs Ext}:

\begin{corollary}
In an exact Mal'tsev category $\X$, consider an object $X$ and an $n$-tuple $(R_i)_{i\in n}$ of equivalence relations on $X$. Write $F=\EqFork^n(\Coeq_{i\in n}(R_i))$ for the $n$-fork in $\X$, induced by first taking pushouts of coequalisers, then taking kernel pairs. The $n$-fork $F$ is a $3^n$-diagram if and only if the $n$-tuple of equivalence relations $(R_i)_{i\in n}$ is distributive.\noproof
\end{corollary}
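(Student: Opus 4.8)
The plan is to reduce the statement to Proposition~\ref{Proposition Distr vs Ext} by way of Theorem~\ref{3^n overview} (equivalently, Theorem~\ref{3^n iff extension}). The crucial bookkeeping observation is that $\arr^n(F)=\Coeq_{i\in n}(R_i)$ and that $F=\EqFork^n(\arr^n(F))$ holds automatically. Both facts follow from the identity $\arr^n\comp\EqFork^n=1$, which I would establish by a one-line induction: $\arr\comp\EqFork=1$ by the very definitions of $\arr$ and $\EqFork$ (the underlying arrow of the kernel-pair fork of $f$ is $f$), and then, using the inductive formulas $\arr^n=\Arr(\arr^{n-1})\comp\arr$ and $\EqFork^n=\EqFork\comp\Arr(\EqFork^{n-1})$, one gets $\arr^n\comp\EqFork^n=\Arr(\arr^{n-1})\comp\Arr(\EqFork^{n-1})=\Arr(\arr^{n-1}\comp\EqFork^{n-1})=\Arr(1)=1$. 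In particular $\EqFork^n$ is fully faithful and $F$ lies in its (replete) image, so the clause ``$F=\EqFork^n(\arr^n(F))$'' is free of charge here.

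The steps are then as follows. First I would record the identity above and deduce $\arr^n(F)=\arr^n(\EqFork^n(\Coeq_{i\in n}(R_i)))=\Coeq_{i\in n}(R_i)$ together with $F=\EqFork^n(\arr^n(F))$. Next, by the equivalence of conditions (i) and (ii) in Theorem~\ref{3^n overview}, the $n$-fork $F$ is a $3^n$-diagram if and only if $\arr^n(F)$ is an $n$-cubic extension, i.e.\ if and only if $\Coeq_{i\in n}(R_i)$ is an $n$-cubic extension. Finally, Proposition~\ref{Proposition Distr vs Ext} says precisely that $\Coeq_{i\in n}(R_i)$ is an $n$-cubic extension if and only if the collection $(R_i)_{i\in n}$ is distributive, and chaining the two equivalences gives the claim. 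Alternatively, one can bypass Theorem~\ref{3^n overview}: for the ``if'' direction, distributivity makes $\Coeq_{i\in n}(R_i)$ an $n$-cubic extension (Proposition~\ref{Proposition Distr vs Ext}), and the restricted right adjoint of Theorem~\ref{3^n iff extension} sends it into $\EFork^n(\X)$; for the ``only if'' direction, a $3^n$-diagram $F$ has $\arr^n(F)=\Coeq_{i\in n}(R_i)$ an $n$-cubic extension by Theorem~\ref{3^n iff extension}, whence distributivity again by Proposition~\ref{Proposition Distr vs Ext}.

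I do not expect a genuine obstacle: the corollary is a direct reinterpretation of Proposition~\ref{Proposition Distr vs Ext} through the equivalence already proved. The only points deserving a line of care are the identity $\arr^n\comp\EqFork^n=1$ (so that $F$ is indeed in the image of $\EqFork^n$ and $\arr^n(F)$ is indeed the prescribed pushout-of-coequalisers $n$-fold arrow) and the fact that this $\EqFork^n$-construction is computed pointwise in $\X$, which is exactly what the remark following Proposition~\ref{Proposition exact-fork} guarantees for forks of $n$-cubic extensions. Once these are in place, the argument is just the composition of the two cited results.
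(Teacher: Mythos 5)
Your proof is correct and is exactly the derivation the paper intends (the corollary is stated with no proof as an immediate consequence of Theorem~\ref{3^n overview} and Proposition~\ref{Proposition Distr vs Ext}): the identity $\arr^n\comp\EqFork^n=1$ makes $\arr^n(F)=\Coeq_{i\in n}(R_i)$ and puts $F$ in the replete image of $\EqFork^n$, after which the two cited results chain together as you describe.
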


\begin{remark}\label{Remark Not Generated by Binary}
We can use this to see that the distributivity conditions are generally not generated by distributivity of binary joins over meets (that is to say, the conditions which hold when we choose $3$ out of the given $n$ equivalence relations). This would, for instance, imply that a $4$-fold regular epimorphism of which all faces are $3$-cubic extensions is always a $4$-cubic extension---which is false in general, even in abelian categories. We may for instance consider any bounded below chain complex of abelian groups~$C$
\[
\xymatrix{\cdots \ar[r] & C_3 \ar[r] & C_2 \ar[r] & C_1 \ar[r] & C_0 \ar[r] & C_{-1} \ar[r] & 0 \ar[r] & 0 \ar[r] & \cdots}
\] 
which is exact in all degrees below $C_2$ but not in $C_2$ itself. Then the augmented simplicial abelian group corresponding to it via the Dold-Kan equivalence truncates to a $4$-fold regular epimorphism which is not a $4$-cubic extension, even though all of its faces are $3$-cubic extensions. 

See~\cite{EGoeVdL} for more on the relationships between simplicial objects, their homology and the higher extension condition. See Example~\ref{Complexes} for a different argument involving a concrete example in the category of abelian groups.
\end{remark}

\section{The \texorpdfstring{$3^n$}{3n}-Lemma in semi-abelian categories}\label{Section Semiabelian}

We now give an interpretation of Theorem~\ref{3^n overview} in the context of a semi-abelian category, where exact forks are equivalent to short exact sequences, and so the $3^n$-Lemma takes a more familiar shape. 

From now on, we assume that $\X$ is semi-abelian. In particular, it is still exact Mal'tsev, so that the results of the previous section apply. But it is also pointed and protomodular, so that the ordinary definition of a short exact sequence makes sense.

\begin{definition}[$n$-Sequence, $3^n$-diagram, $n$-extension]
Let $\ThreeCat$ denote the category $ 0 \rightarrow 1 \rightarrow 2$. For $ n\geq 1$, the category $ \ThreeCat^n$ has the initial object $ i_n\DefEq (0,\dots,0)$ and the terminal object $ t_n\DefEq (2,\dots ,2)$. Moreover, it has an embedding $ \alpha_{e,i}\from \ThreeCat\to \ThreeCat^n$ parallel to the $i$-th coordinate axis, for each object $e$ whose $i$-th coordinate is $0$.
	
Now, given objects $X$ and $A$ in $\X$, an \defn{$n$-sequence under $A$ and over~$X$} in~$\X$ is a functor $ E\from (\ThreeCat^n)^{\op}\to \X$ which sends $ i_n$ to $X$, $ t_n$ to $A$. We write $\Seq^n(\X)$ for the functor category $\Fun(\ThreeCat^n)^{\op}, \X)$: the category of $n$-sequences and natural transformations between them.

An \defn{$3^n$-diagram} or \defn{$n$-extension}~\cite{PVdL1, RVdL2} under $A$ and over $X$ is an $n$-sequence~$E$ such that each composite below is a short exact sequence:
$$
\xymatrix@R=5ex@C=3em{
\ThreeCat^{\op} \ar[r]^-{(\alpha_{e,i})^{\op}} &
 (\ThreeCat^n)^{\op} \ar[r]^-{E} &
 \X.
}
$$
\end{definition}

For example, a $1$-extension under $A$ and over $X$ is just a short exact sequence $ A=E_2 \to E_1 \to E_0=X$. A $2$-extension under $A$ and over $X$ is a $3\times 3$-diagram, in which each row and column is short exact:
$$
\xymatrix@R=5ex@C=3em{
A=E_{2,2} \ar@{{ |>}->}[r] \ar@{{ |>}->}[d] &
 E_{1,2} \ar@{-{ >>}}[r] \ar@{{ |>}->}[d] &
 E_{0,2} \ar@{{ |>}->}[d] \\
E_{2,1} \ar@{{ |>}->}[r] \ar@{-{ >>}}[d] &
 E_{1,1} \ar@{-{ >>}}[r] \ar@{-{ >>}}[d] &
 E_{0,1} \ar@{-{ >>}}[d] \\
E_{2,0} \ar@{{ |>}->}[r] &
 E_{1,0} \ar@{-{ >>}}[r] &
 X=E_{0,0}.
}
$$
More generally, an $n$-extension is an $n$-fold short exact sequence: a ``normalised'' $3^n$-diagram, a pointed version of the concept of an exact $n$-fork. This is made precise with Proposition~\ref{Proposition (de)normalisation}. First we give an overview of some functors which occur naturally in this context.

\begin{definition}[Epi part, mono part]
The \defn{epi part} of a $1$-sequence $E$ is the morphism ${E_1\to E_0}$. Note that, for non-exact $1$-sequences, this morphism need not be an epimorphism. Sending $1$-sequences to their epi part determines a functor $\epi\colon{\Seq^1(\X)\to \Arr(\X)}$.

The \defn{mono part} of a $1$-sequence $E$ is the morphism ${E_2\to E_1}$. Note that, for non-exact $1$-sequences, this morphism need not be a monomorphism. Sending $1$-sequences to their mono part determines a functor $\mono\colon{\Seq^1(\X)\to \Arr(\X)}$.

These two functors naturally extend to functors 
\[
\epi^n, \mono^n\colon{\Seq^n(\X)\to \Arrn(\X)}.
\]
\end{definition}

Just like for forks (Figure~\ref{Figure Triangle} and Figure~\ref{Figure n-Triangle}) we find a commutative triangle of adjunctions (Figure~\ref{Figure n-Triangle Pointed}).
\begin{figure}
$\resizebox{.5\textwidth}{!}
{\xymatrix@!0@R=8em@C=8em{\Arr^n(\X) \ar@<1ex>[rr]^-{\Coker^n} \ar@<1ex>[rd]^-{\CokerSeq^n} \ar@{}[rr]|-{\bot} \ar@{}[rd]|-{\rtop} && \Arr^n(\X) \ar@<1ex>[ld]^-{\KerSeq^n} \ar@<1ex>[ll]^-{\K^n}\\
& \Seq^n(\X) \ar@<1ex>[lu]^-{\mono^n} \ar@<1ex>[ru]^-{\epi^n} \ar@{}[ru]|-{\ltop}}}$
\caption{Adjunctions between $n$-sequences and their epi and mono parts.}\label{Figure n-Triangle Pointed}
\end{figure}
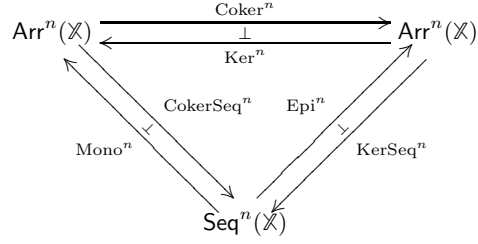
The adjunction between kernels and cokernels lifts to the level of $n$-fold arrows; completing an $n$-fold arrow to an $n$-sequence by taking cokernels defines a left adjoint to the forgetful functor $\mono^n$; and dually, the forgetful functor $\epi^n$ is left adjoint to the functor $\KerSeq^n$ which takes an $n$-cube and completes it to an $n$-sequence by taking kernels. 

\begin{definition}[$n$-Fold normal monomorphism]
	An $n$-cube in $\X$ is called an \defn{$n$-fold normal monomorphism} when all of its arrows are normal monomorphisms, and all of its squares are pullback squares. We write $\NMono^n(\X)$ for the full subcategory of $\Arrn(\X)$ determined by the $n$-fold normal monomorphisms.
\end{definition}

Any $n$-tuple of normal monomorphisms induces an $n$-fold normal monomorphism by repeated pullbacks of the given normal monomorphisms along each other. 

\begin{proposition}[The (de)normalisation process]\label{Proposition (de)normalisation}
In a semi-abelian category $\X$, the replete image $\PERel^n(\X)$ of the functor $\Eq^n$ (Figure~\ref{Figure n-Triangle}) and the replete image $\NMono^n(\X)$ of the functor $\K^n$ (Figure~\ref{Figure n-Triangle Pointed}) are equivalent categories.
\end{proposition}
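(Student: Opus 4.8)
The plan is to lift the classical one-dimensional normalisation equivalence to dimension $n$, exhibiting both $\PERel^n(\X)$ and $\NMono^n(\X)$ as incarnations of $\Reg^n(\X)$. Recall that in a semi-abelian category, sending a regular epimorphism to its kernel and a normal monomorphism to its cokernel defines an adjoint equivalence $\Reg(\X)\simeq\NMono(\X)$: indeed $q=\coker(\ker q)$ for any regular epimorphism $q$, since $\X$ is pointed, protomodular and regular, while $k=\ker(\coker k)$ for any normal monomorphism $k$. Composed with the equivalence $\ERel(\X)\simeq\Reg(\X)$ recalled in Section~\ref{Section Introduction}, this is the usual normalisation/denormalisation equivalence between congruences on an object and normal subobjects of it, and it preserves the underlying object.

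On the congruence side the higher version is already at hand: as explained in Section~\ref{Section Introduction}, the functor $\Eq^n$ restricts to an adjoint equivalence $\Reg^n(\X)\simeq\PERel^n(\X)$ with inverse $\Coeq^n$. Here we also use that $\Eq^n$ and $\K^n$ depend on a morphism only through its regular image: if $f=m\comp q$ with $q$ a regular epimorphism and $m$ a monomorphism, then $\Eq(f)=\Eq(q)$ and $\ker(f)=\ker(q)$, and this propagates to all $n$ since kernel pairs, kernels and cokernels in functor categories are computed pointwise. Consequently the replete image of $\Eq^n$ on $\Arr^n(\X)$, namely $\PERel^n(\X)$, agrees with its replete image on $\Reg^n(\X)$, and likewise $\NMono^n(\X)$ is the replete image of the restriction of $\K^n$ to $\Reg^n(\X)$.

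On the normal-monomorphism side one proves, by induction on $n$, that $\K^n$ restricts to an adjoint equivalence $\Reg^n(\X)\simeq\NMono^n(\X)$ with inverse $\Coker^n$. The case $n=1$ is the equivalence recalled above. For the inductive step one uses the factorisations $\K^n=\K\comp\Arr(\K^{n-1})$ and $\Coker^n=\Arr(\Coker^{n-1})\comp\Coker$: viewing an $n$-fold regular epimorphism $F$ as a regular epimorphism between two $(n-1)$-cubic extensions, the induction hypothesis turns $\Arr(\K^{n-1})(F)$ into a morphism of $(n-1)$-fold normal monomorphisms, and applying $\K$ in the remaining direction yields its kernel. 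Since this kernel and all the induced comparisons to pullbacks are computed pointwise in the functor category $\Arr^{n-1}(\X)$, and since in a semi-abelian category the normalisation of a regular pushout of regular epimorphisms is a pullback of normal monomorphisms, one checks that the resulting $n$-cube has all edges normal monomorphisms and all faces pullbacks, hence lies in $\NMono^n(\X)$; the argument for $\Coker^n$ is dual, and the unit and counit are isomorphisms because they are so pointwise.

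Composing the two equivalences just obtained, $\PERel^n(\X)\simeq\Reg^n(\X)\simeq\NMono^n(\X)$, proves the proposition; moreover the composite is naturally isomorphic to the expected $n$-fold normalisation functor, because the one-dimensional equivalence $\Reg(\X)\simeq\NMono(\X)$ intertwines $\Eq$ with $\K$ and $\Coeq$ with $\Coker$ up to natural isomorphism (both normalise a congruence to the kernel of its quotient map) and the relevant colimits and limits are pointwise. Alternatively one may organise the whole argument around the $n$-tuple descriptions: $\PERel^n(\X)\simeq\ERel(\X)^n$ via the adjunction $U\dashv\square$, an analogous equivalence $\NMono^n(\X)\simeq\NMono(\X)^n$ obtained by taking repeated pullbacks of the $n$ ``initial'' normal monomorphisms of a cube, and the one-dimensional normalisation raised to the $n$-th power. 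The main obstacle in either approach is the inductive identification $\Reg^n(\X)\simeq\NMono^n(\X)$: checking that the pointwise normalisation of a higher regular epimorphism really is an $n$-fold normal monomorphism --- all faces being pullbacks --- is precisely where Barr exactness and protomodularity of $\X$ are used.
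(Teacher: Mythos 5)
Your proposal is correct and proves the right thing, but your primary route through the second half differs from the paper's. Both arguments agree on the strategy of exhibiting $\PERel^n(\X)$ and $\NMono^n(\X)$ as two incarnations of $\Reg^n(\X)$, and both take the congruence side for granted from Section~\ref{Section Introduction}. For the normal-monomorphism side, however, the paper does not induct on $n$ via $\K^n=\K\comp\Arr(\K^{n-1})$; instead it mirrors the $\Eq^n$ story exactly, writing $\Coker^n$ as the composite of left adjoints $\Push\comp(\Coker)^n\comp\CoForget$ through $\Arr(\X)^n$ and $\Reg(\X)^n$, so that $\K^n$ is naturally isomorphic to $\Pull\comp(\K)^n\comp\Forget$ and the equivalence $\NMono^n(\X)\simeq\Reg^n(\X)$ drops out by (co)restricting to replete images --- this is precisely the ``alternative'' you sketch in your last paragraph, so you have in effect found the paper's proof as well as another one. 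Your inductive route works, but two remarks. First, a terminological slip: the domain and codomain of an $n$-fold regular epimorphism are $(n-1)$-fold \emph{regular epimorphisms}, not $(n-1)$-\emph{cubic extensions}; for $n\geq 3$ the latter form a strictly smaller class, and the proposition concerns $\Reg^n(\X)$, not $\Ext^n(\X)$. Second, you locate the difficulty in checking that all faces of the normalised cube are pullbacks, but this part is in fact automatic: iterated kernels compute intersections, so every face of $\K^n(F)$ is an intersection square and hence a pullback, for \emph{any} $n$-fold arrow $F$. Where exactness, pointedness and protomodularity really enter is in making the unit and counit invertible --- a regular epimorphism is the cokernel of its kernel, a normal monomorphism is the kernel of its cokernel, and an $n$-fold regular epimorphism (respectively, $n$-fold normal monomorphism) is determined by its $n$ initial (respectively, terminal) edges via pushouts (respectively, pullbacks). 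The $n$-tuple decomposition of the paper makes this last determinacy point explicit, which is what it buys over the induction.
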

\begin{proof}
When $n=1$ this is just the equivalence between (effective) equivalence relations and normal monomorphisms, since both are equivalent to the category~$\Reg(\X)$ of regular (=~normal) epimorphisms. As explained in Subsection~\ref{Subsection Higher Arrows}, the former equivalence readily extends to arbitrary degrees: $\PERel^n(\X)\simeq \Reg^n(\X)$. We prove its companion $\NMono^n(\X)\simeq \Reg^n(\X)$. 
	
We view the functor $\Coker^n\colon{\Arr^n(\X)\to \Arr^n(\X)}$ as the composite of left adjoints
\[
\xymatrix{\Arr^n(\X) \ar[r]^-{\CoForget} & \Arr(\X)^n \ar[r]^-{(\Coker)^n} & \Reg(\X)^n \ar[r]^-{\Push} & \Reg^n(\X)}
\]
and the functor $\K^n\colon {\Arr^n(\X)\to \Arr^n(\X)}$ as the composite of right adjoints
\[
\xymatrix{\NMono^n(\X) & \NMono(\X)^n \ar[l]^-{\Pull} & \Arr(\X)^n \ar[l]^-{(\K)^n} & \Arr^n(\X). \ar[l]^-{\Forget} }
\]
Here the functor
\[
\CoForget\colon{\Arr^n(\X)\to \Arr(\X)^n}
\]
send an $n$-cube to the underlying $n$-tuple of arrows with a common codomain, while its right adjoint $\Pull\colon{\Arr(\X)^n\to\Arr^n(\X)}$ sends an $n$-tuple of normal monomorphisms with a common codomain to the $n$-cube obtained by taking successive pullbacks. (Co)restricting the adjunction $\Coker^n\dashv \K^n$ to its replete images yields the needed equivalence $\NMono^n(\X)\simeq \Reg^n(\X)$.
\end{proof}

\begin{theorem}[$3^n$-Lemma]\label{Theorem 3^n}
Let $\X$ be a semi-abelian category and consider a functor $ E\from (\ThreeCat^n)^{\op}\to \X$. Then the following conditions are equivalent:
\begin{tfae}
	\item $E$ is an $3^n$-diagram: each composite $E\comp (\alpha_{e,i})^{\op}$ in the $n$-sequence $E$ is a short exact sequence in $\X$;
	\item $E=\KerSeq^n(\epi^n(E))$ and $\epi^n(E)$ is an $n$-cubic extension;
	\item $E=\CokerSeq^n(\mono^n(E))$ and $\mono^n(E)$ is induced by a distributive $n$-tuple of normal monomorphisms;
	\item $E=\KerSeq^n(\epi^n(E))$ and $E=\CokerSeq^n(\mono^n(E))$.
\end{tfae}
\end{theorem}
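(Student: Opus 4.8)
The plan is to deduce this pointed $3^n$-Lemma from its denormalised counterpart, Theorem~\ref{3^n overview}, by transporting everything along the normalisation/denormalisation equivalence. The key observation is that in a semi-abelian category the triangle of adjunctions of Figure~\ref{Figure n-Triangle Pointed} is, after suitable restriction, a copy of the triangle of Figure~\ref{Figure n-Triangle}. Indeed, for $n=1$ an $n$-sequence is a short exact sequence precisely when it is the normalisation of an exact fork, and both the category of short exact sequences and the category $\EFork(\X)$ of exact forks are equivalent to $\Reg(\X)$; since kernels, kernel pairs, cokernels, coequalisers, pullbacks and pushouts are all computed pointwise in functor categories, this $n=1$ equivalence lifts to an equivalence between the category of $3^n$-diagrams in the fork sense and the category of $3^n$-diagrams in the $n$-sequence sense, under which $\arr^n$ and $\epi^n$ agree (both extract the underlying $n$-cube of arrows), $\EqFork^n$ corresponds to $\KerSeq^n$, $\grph^n$ corresponds to $\mono^n$, and $\CoeqFork^n$ corresponds to $\CokerSeq^n$.

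First I would make this dictionary precise, checking that the squares relating the two triangles commute up to coherent natural isomorphism. This is routine given the classical $n=1$ equivalence between (effective) equivalence relations and normal monomorphisms, together with the fact that the relevant limits and colimits are pointwise (compare the remark following Proposition~\ref{Proposition exact-fork}). Next I would record the two facts needed to translate condition~(iii): by Proposition~\ref{Proposition (de)normalisation} parallelistic $n$-fold equivalence relations correspond exactly to $n$-fold normal monomorphisms, with a parallelistic $n$-fold equivalence relation $\bigboxvoid_{i\in n}R_i$ corresponding to the $n$-fold normal monomorphism induced by the $n$-tuple $(K_i)_{i\in n}$ of normalisations of the $R_i$; and, since normalisation is an isomorphism between the lattice of congruences on $X$ and the lattice of normal subobjects of $X$, it preserves all meets and joins, so that $(R_i)_{i\in n}$ is distributive if and only if $(K_i)_{i\in n}$ is. Hence ``$\grph^n(F)$ is a distributive parallelistic $n$-fold equivalence relation'' translates precisely to ``$\mono^n(E)$ is induced by a distributive $n$-tuple of normal monomorphisms''.

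With the dictionary in place, conditions (i)--(iv) of the present theorem translate verbatim into conditions (i)--(iv) of Theorem~\ref{3^n overview} applied to the exact $n$-fork corresponding to $E$: (i) is just the identification of the two notions of $3^n$-diagram; (ii) matches because $\epi^n$ agrees with $\arr^n$, $\KerSeq^n$ with $\EqFork^n$, and the $n$-cubic extension condition is imposed on the same underlying $n$-cube; (iii) matches by the previous paragraph; and (iv) is the conjunction of the reconstruction halves of (ii) and (iii). The statement then follows at once from Theorem~\ref{3^n overview}.

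The main obstacle is essentially bookkeeping: verifying that the normalisation equivalence is compatible with all four functors occurring in the two triangles, so that the translation of conditions (ii), (iii) and (iv) is literal and not merely morally correct. Nothing here is deep---it rests only on the standard semi-abelian facts that normalisation is a lattice isomorphism between congruences and normal subobjects on a fixed object and that kernels, kernel pairs and their left adjoints are computed pointwise---but the diagram chase matching the two triangles of adjunctions needs to be carried out with some care.
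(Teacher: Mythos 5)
Your proposal is correct and follows exactly the paper's route: the paper's proof of Theorem~\ref{Theorem 3^n} consists precisely of invoking Proposition~\ref{Proposition (de)normalisation} to transport the four equivalent conditions of Theorem~\ref{3^n overview} along the normalisation equivalence. You simply spell out in more detail the dictionary between the two triangles of adjunctions (and the fact that normalisation, being a lattice isomorphism between congruences and normal subobjects, preserves the distributivity condition), which the paper leaves implicit.
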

\begin{proof}
Via Proposition~\ref{Proposition (de)normalisation}, these equivalences correspond one by one to the equivalences in Theorem~\ref{3^n overview}.
\end{proof}

The equivalence between, (i), (ii) and (iii) may be unpacked as follows:

\begin{corollary}\label{Corollary 3^n}
A collection of $n$ normal subobjects $(K_i)_{0\leq i< n}$ on an object $X$ induces a $3^n$-diagram, and thus an $n$-cubic extension, by first taking intersections, and then cokernels of those intersections, if and only if the equality
\[
 \big(\bigmeet_{j\in J_0}K_j\big)\meet \bigjoin_{i=1}^k\big(\bigmeet_{j\in J_i}K_j\big)=
 \bigjoin_{i=1}^k\big( \bigmeet_{j\in J_0\cup J_i}K_j\big)
\]
holds whenever $J_0$, $J_1$, \dots, $J_k\subseteq n$ with $k\geq 1$ such that
$J_i \cap J_j=\emptyset$ for all $i\neq j$.\noproof	
\end{corollary}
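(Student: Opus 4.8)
The plan is to read the statement off from the equivalence of conditions (i) and (iii) in Theorem~\ref{Theorem 3^n}. First I would set $R_i=\Eq(\coker K_i)$, the congruence on $X$ which normalises to the normal subobject $K_i$, and recall the well-known fact that in a semi-abelian category normalisation is an order isomorphism between the lattice of normal subobjects of $X$ and the lattice of congruences on $X$ (see~\cite{Borceux-Bourn}); being an order isomorphism between lattices, it automatically preserves finite meets and finite joins. Consequently the $n$-tuple $(K_i)_{i\in n}$ satisfies equation~\eqref{Eq Distr Norm} for all admissible families $J_0,\dots,J_k$ if and only if the $n$-tuple of congruences $(R_i)_{i\in n}$ is distributive in the sense of Section~\ref{Section Exact Mal'tsev}; this is exactly what ``$\mono^n(E)$ is induced by a distributive $n$-tuple of normal monomorphisms'' means in Theorem~\ref{Theorem 3^n}~(iii).

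Next I would identify the construction described in the statement with a composite of the functors appearing in Proposition~\ref{Proposition (de)normalisation}. Forming the $n$-cube obtained by repeated pullbacks of the $K_i$ along one another---``first taking intersections''---is exactly applying the functor $\Pull\colon\Arr(\X)^n\to\Arr^n(\X)$ to the $n$-tuple $(K_i)_{i\in n}$ of normal monomorphisms with common codomain $X$, and the result lands in $\NMono^n(\X)$; completing this $n$-cube to an $n$-sequence ``by taking cokernels of those intersections'' in all $n$ directions is then the functor $\CokerSeq^n$. So the $n$-sequence $E$ produced by the construction is $E=\CokerSeq^n(\Pull(K_0,\dots,K_{n-1}))$. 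Since $\CokerSeq^n$ is left adjoint to $\mono^n$ and fully faithful---a morphism of $n$-cubes extends uniquely, by the universal property of cokernels, to a morphism of the cokernel-completed $n$-sequences---one obtains $\mono^n(E)=\Pull(K_0,\dots,K_{n-1})$, the $n$-fold normal monomorphism induced by $(K_i)_{i\in n}$, together with $E=\CokerSeq^n(\mono^n(E))$.

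With $E$ presented in this way, condition (iii) of Theorem~\ref{Theorem 3^n} reduces to the assertion that $(K_i)_{i\in n}$ is distributive, that is, that equation~\eqref{Eq Distr Norm} holds for all admissible $J_0,\dots,J_k$, while condition (i) says precisely that $E$ is a $3^n$-diagram; the implication (i)$\Rightarrow$(ii) then also provides the accompanying $n$-cubic extension $\epi^n(E)$. Invoking Theorem~\ref{Theorem 3^n} therefore finishes the proof. I expect the only delicate point to be the bookkeeping in the second paragraph---verifying that ``iterated pullbacks followed by iterated cokernels'' is literally $\CokerSeq^n\comp\Pull$ and that $\mono^n$ returns the original $n$-fold normal monomorphism---the rest being a direct reading of Theorem~\ref{Theorem 3^n} combined with the lattice isomorphism between normal subobjects and congruences in a semi-abelian category.
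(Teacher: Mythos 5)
Your proposal is correct and matches the paper's intent exactly: the paper gives no separate argument for Corollary~\ref{Corollary 3^n}, presenting it as the unpacking of the equivalence (i)$\Leftrightarrow$(iii) in Theorem~\ref{Theorem 3^n} via the lattice isomorphism between normal subobjects and congruences, which is precisely what you carry out. The only cosmetic point is your appeal to ``full faithfulness'' of $\CokerSeq^n$, which holds only on $\NMono^n(\X)$ (where the unit $k\mapsto\ker(\coker(k))$ is an isomorphism), but that is exactly where you use it, so the argument stands.
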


\begin{example}\label{Arithmetical rings}
	In~\cite{Fuchs}, a ring is called \defn{arithmetical} when all of its ideals satisfy the distributivity condition. This is a well-established concept; for instance, in~\cite{Jensen} it is shown that an integral domain is arithmetical if and only if it is a so-called \emph{Pr\"ufer ring}.

Consider an $n$-sequence $E$ in the (semi-abelian) category of rings (with or without unit), such that the middle object $E_{1,\dots,1}$ in this sequence is arithmetical. Then for~$E$ to be a $3^n$-diagram, it suffices that $E=\CokerSeq^n(\mono^n(E))$ and each commutative square in the $n$-cube $\mono^n(E)$ is a pullback of normal monomorphisms. In other words, any $n$-tuple of ideals in an arithmetical ring induces a $3^n$-diagram, and this characterises the concept of an arithmetical ring.
\end{example}

\begin{example}\label{Locally cyclic groups}
A group is called \defn{locally cyclic} when all of its finitely generated subgroups are cyclic. This happens---see~\cite{Ore1, Ore2} or~\cite{Hall}---if and only if the lattice of all subgroups of the given group is distributive. So, any $n$-tuple of normal subgroups of a locally cyclic group induces a $3^n$-diagram, and this characterises the concept of a locally cyclic \emph{abelian} group.
\end{example}

\begin{example}\label{Complexes}
The abelian (additive) group of complex numbers $\C$ is well known not to be locally cyclic. 

We consider the solutions of the equation $x^3 - 1 = 0$ and call them $1$, $a$ and $a^2$. In particular, we have that $1 + a + a^2 = 0$. In the lattice of subgroups of $\C$, we single out those generated by $1$, $a$ and $a^2$. We notice that the meet of any two of them is $0$, while the join of any two is equal to $\langle1\rangle\join\langle a\rangle= \langle1\rangle\join\langle a^2\rangle=\langle a\rangle\join\langle a^2\rangle$, since each of $1$, $a$ and $a^2$ can be written as a $\Z$-linear combination of the two others. Thus it is easy to see that these three subgroups of $\C$ do not distribute. Hence they do not generate a $3\times 3 \times 3$-diagram with $\C$ in the centre.

Let us now consider the subgroups $\langle1\rangle$, $\langle2a\rangle$, $\langle3a\rangle$ and $\langle a^2\rangle$ of $\C$. It is readily checked by hand that any choice of three of those forms a distributive collection. Yet they do not form a distributive quadruple. Indeed, 
\[
\bigl((\langle2a\rangle\meet \langle3a\rangle)\join \langle1\rangle\bigr)\meet \langle a^2\rangle = (\langle6a\rangle\join \langle1\rangle)\meet \langle a^2\rangle = \langle6a^2\rangle,
\]
while 
\[
\bigl((\langle2a\rangle\meet \langle3a\rangle)\meet \langle a^2\rangle\bigr)\join (\langle1\rangle\meet \langle a^2\rangle)=(\langle6a\rangle\meet \langle a^2\rangle)\join (\langle1\rangle\meet \langle a^2\rangle) = 0\join 0 = 0.
\]
So, in the $4$-sequence generated by those subobjects of $\C$, any $3$-sequence whose ``middle object'' is $\C$ is a  $3\times 3 \times 3$-diagram. However,  the entire $4$-sequence itself fails to be a $3\times 3 \times 3\times 3$-diagram.
\end{example}

%
%
%
%
%
%

\section{Final remarks}\label{Section Final}

\subsection{Yoneda extensions}
In some sense, a $3^n$-diagram is a non-abelian replacement for the concept of a Yoneda extension~\cite{Yoneda-Exact-Sequences}. In the context of an abelian category, the two are equivalent via (a truncated version of) the Dold-Kan correspondence~\cite{Dold-Puppe,EGoeVdL}. In a semi-abelian context, $3^n$-diagrams occur in the interpretation of the derived functors of $\Hom(-,A)\colon{\X^{\op}\to \Ab}$ for any abelian object $A$ in~$\X$; see~\cite{RVdL2, PVdL1}.

\subsection{Aspherical augmented simplicial objects}
Recall from~\cite{EGoeVdL} that an augmented simplicial object $S$ in a semi-abelian category $\X$ is aspherical (i.e., all of its homology objects vanish) if and only if for every $n\geq 0$, the $(n+1)$-fold arrow induced by the $n$-truncation of $S$ is an $(n+1)$-cubic extension. From the above analysis it follows right away that if we write $k_i\colon{K_i\to X=S_n}$ for the kernel of $\del_i\colon S_n\to S_{n-1}$, then $S$ is aspherical precisely when \eqref{Eq Distr Norm} holds whenever it makes sense.

\subsection{Non-effective higher equivalence relations}
Non-effective higher equivalence relations exist, and are in fact quite common. Let us consider the case $n=2$. The normalisation of a double equivalence relation is a commutative square of normal monomorphisms. The original double equivalence relation is parallelistic if and only if its normalisation is a pullback square. So whenever in a lattice of normal subobjects we have $K\normal M\normal X$, $K\normal N\normal X$ such that $K\neq M\meet N$, we find an example of a non-effective double equivalence relation. Clearly such examples may be constructed easily, even in the abelian or in the arithmetical case.

\subsection{More general contexts}
As remarked in the introduction, the $3\times 3$-Lemma of~\cite{Bourn2001} does not only admit a non-pointed generalisation to regular Mal'tsev categories such as~\cite{Bourn2003}. This result extends at least to regular Goursat categories~\cite{Lack, ZJanelidze-Snake, Gran-Rodelo}, and the pointed and unpointed cases can be treated in a single framework~\cite{GJR}. Further extensions to ``relative'' contexts exist~\cite{Tamar-Janelidze-Thesis, Tamar_Janelidze} or~\cite{BM-3x3}. Starting from Subsection~\ref{Subsection Higher Arrows}, we restricted ourselves to exact Mal'tsev categories, essentially for the sake of simplicity. We believe that the results of this article may be similarly generalised, and hope such generalisations will be developed in the near future. 


\begin{thebibliography}{10}

\bibitem{Borceux-Bourn}
F.~Borceux and D.~Bourn, \emph{Mal'cev, protomodular, homological and
  semi-abelian categories}, Math. Appl., vol. 566, Kluwer Acad. Publ., 2004.

\bibitem{Bourn2001}
D.~Bourn, \emph{{$3\times 3$} {L}emma and protomodularity}, J.~Algebra
  \textbf{236} (2001), 778--795.

\bibitem{Bourn2003}
D.~Bourn, \emph{The denormalized {$3\times 3$} lemma}, J.~Pure Appl. Algebra
  \textbf{177} (2003), 113--129.

\bibitem{BM-3x3}
D.~Bourn and A.~Montoli, \emph{The {$3\times 3$} lemma in the
  {$\Sigma$}-{M}al'tsev and {$\Sigma$}-protomodular settings. {A}pplications to
  monoids and quandles}, preprint {\texttt{arXiv:1801.09104}}, 2018.

\bibitem{Carboni-Kelly-Pedicchio}
A.~Carboni, G.~M. Kelly, and M.~C. Pedicchio, \emph{Some remarks on {M}altsev
  and {G}oursat categories}, Appl. Categ. Structures \textbf{1} (1993),
  385--421.

\bibitem{CLP}
A.~Carboni, J.~Lambek, and M.~C. Pedicchio, \emph{Diagram chasing in {M}al'cev
  categories}, J.~Pure Appl. Algebra \textbf{69} (1991), 271--284.

\bibitem{Dold-Puppe}
A.~Dold and D.~Puppe, \emph{Homologie nicht-additiver {F}unktoren.
  {A}nwendungen}, Ann. Inst. Fourier (Grenoble) \textbf{11} (1961), 201--312.

\bibitem{EGJVdL}
T.~Everaert, J.~Goedecke, T.~Janelidze-Gray, and T.~Van~der Linden,
  \emph{Relative {Mal'tsev} categories}, Theory Appl.~Categ. \textbf{28}
  (2013), no.~29, 1002--1021.

\bibitem{EGoeVdL}
T.~Everaert, J.~Goedecke, and T.~Van~der Linden, \emph{Resolutions, higher
  extensions and the relative {M}al'tsev axiom}, J.~Algebra \textbf{371}
  (2012), 132--155.

\bibitem{EGVdL}
T.~Everaert, M.~Gran, and T.~Van~der Linden, \emph{Higher {H}opf formulae for
  homology via {G}alois {T}heory}, Adv.~Math. \textbf{217} (2008), no.~5,
  2231--2267.

\bibitem{Fuchs}
L.~Fuchs, \emph{{\"U}ber die {I}deale arithmetischer {R}inge}, Comment. Math.
  Helv. \textbf{23} (1949), no.~1, 334--341.

\bibitem{GJR}
M.~Gran, Z.~Janelidze, and D.~Rodelo, \emph{{$3\times 3$} lemma for star-exact
  sequences}, Homology, Homotopy Appl. \textbf{14} (2012), no.~2, 1--22.

\bibitem{Gran-Rodelo}
M.~Gran and D.~Rodelo, \emph{A new characterisation of {G}oursat categories},
  Appl. Categ. Structures \textbf{20} (2012), no.~3, 229--238.

\bibitem{Hall}
M.~Hall, \emph{The theory of groups}, Macmillan, 1959.

\bibitem{Janelidze-Marki-Tholen}
G.~Janelidze, L.~M{\'a}rki, and W.~Tholen, \emph{Semi-abelian categories},
  J.~Pure Appl. Algebra \textbf{168} (2002), no.~2--3, 367--386.

\bibitem{Tamar_Janelidze}
T.~Janelidze, \emph{Relative homological categories}, J.\ Homotopy Relat.\
  Struct. \textbf{1} (2006), no.~1, 185--194.

\bibitem{Tamar-Janelidze-Thesis}
T.~Janelidze, \emph{Foundation of relative non-abelian homological algebra}, Ph.D.
  thesis, University of Cape Town, 2009.

\bibitem{ZJanelidze-Snake}
Z.~Janelidze, \emph{The pointed subobject functor, {$3\times 3$} lemmas, and
  subtractivity of spans}, Theory Appl. Categ. \textbf{23} (2010), no.~11,
  221--242.

\bibitem{Jensen}
Ch.~U. Jensen, \emph{On characterizations of {Pr\"ufer} rings}, Math. Scand.
  \textbf{13} (1963), 90--98.

\bibitem{Lack}
S.~Lack, \emph{The 3-by-3 lemma for regular {G}oursat categories}, Homology,
  Homotopy Appl. \textbf{6} (2004), no.~1, 1--3.

\bibitem{Ore1}
{\O}.~Ore, \emph{Structures and group theory. {I}}, Duke Math. J. \textbf{3}
  (1937), no.~2, 149--174.

\bibitem{Ore2}
{\O}.~Ore, \emph{Structures and group theory. {II}}, Duke Math. J. \textbf{4}
  (1938), no.~2, 247--269.

\bibitem{Pedicchio2}
M.~C. Pedicchio, \emph{Arithmetical categories and commutator theory}, Appl.
  Categ. Structures \textbf{4} (1996), no.~2--3, 297--305.

\bibitem{PVdL1}
G.~Peschke and T.~Van~der Linden, \emph{The {Y}oneda isomorphism commutes with
  homology}, J.~Pure Appl.\ Algebra \textbf{220} (2016), no.~2, 495--517.

\bibitem{RVdL2}
D.~Rodelo and T.~Van~der Linden, \emph{Higher central extensions and
  cohomology}, Adv. Math. \textbf{287} (2016), 31--108.

\bibitem{Yoneda-Exact-Sequences}
N.~Yoneda, \emph{On {E}xt and exact sequences}, J. Fac. Sci. Univ. Tokyo
  \textbf{1} (1960), no.~8, 507--576.

\end{thebibliography}

\end{document}